\documentclass[12pt,a4paper,oneside,reqno]{article}
\usepackage[utf8]{inputenc}
\usepackage[english]{babel}
\usepackage[symbol]{footmisc}
\usepackage{amssymb,amsmath,amsthm,amsfonts,xcolor,enumerate,hyperref, comment,longtable, cleveref}
\usepackage{verbatim}
\usepackage{times}
\usepackage{cite}
\usepackage{pdflscape}
\usepackage[mathcal]{euscript}
\usepackage{tikz}
\usepackage{tikz-cd}
\usepackage{hyperref}
 
\usepackage{cancel}
\usepackage{stmaryrd}
 
\usepackage{amsfonts}
\usepackage{amssymb}
\usepackage{times}
\usepackage{xcolor}
\usepackage{tkz-graph}
\usepackage{url}
\usepackage{float}
\usepackage{tasks}
\usepackage{array}
\newcolumntype{C}[1]{>{\centering\arraybackslash}p{#1}}
\newcolumntype{L}[1]{>{\raggedright\arraybackslash}p{#1}}

\usepackage{cite}
\usepackage{hyperref}

 \usepackage{fancyhdr} 
\usepackage{amsfonts}
\usepackage{amsmath}
\usepackage{eurosym}
\usepackage{geometry}

\usepackage{caption,booktabs}
\usepackage{fouriernc}
\DeclareMathAlphabet{\mathcal}{OMS}{cmsy}{m}{n}
\theoremstyle{plain}
\newtheorem{theorem}{Theorem}[section]
\newtheorem{lemma}[theorem]{Lemma}
\newtheorem{proposition}[theorem]{Proposition}
\newtheorem{corollary}[theorem]{Corollary}

\theoremstyle{definition}
\newtheorem{definition}[theorem]{Definition}
\newtheorem{example}[theorem]{Example}
\newtheorem{remark}[theorem]{Remark}

\numberwithin{equation}{section}

\newcommand{\C}{\mathbb{C}}
\DeclareMathOperator{\Ann}{Ann}
\DeclareMathOperator{\Span}{span}
\begin{document}

\noindent{\Large
\begin{center}
    $\delta$-Mock-Novikov Algebras: Structural Theory, Operad and Poisson-Type Constructions
\end{center}}
 
\bigskip

\begin{center}
{\bf
   Jie Ruan\footnote{School of Mathematics and Statistics, Northeast Normal University, Changchun, 130024, China; ruanriss@126.com} \ \  
   Zafar Normatov\footnote{School of Mathematics, Jilin University, Changchun, China; z.normatov@inbox.ru}}
\end{center}

\noindent {\bf Abstract.}
We introduce $\delta$-mock-Novikov algebras as a parameter-dependent mock analogue of Novikov algebras. For $\delta=1$, the commutator of
a mock-Novikov algebra defines a Malcev algebra.  Every
$\delta$-mock-Novikov algebra satisfying
$\mathcal{A}^2\subseteq\operatorname{Ann}(\mathcal{A})$ is shown to be
differentially special. At the operadic level, the binary quadratic operad governing
$\delta$-mock-Novikov algebras is quadratically
self-dual but not Koszul. We prove that finite-dimensional $\delta$-mock-Novikov algebras are
nilpotent in characteristic zero and classify those of dimension at
most four. Finally, we study the associated Poisson-type structures and establish
relations and constructions among them via polarization,
depolarization, and tensor products.
\bigskip 

\noindent {\bf Keywords}:
$\delta$-mock-Novikov algebra; $\delta$-mock-Novikov-Poisson algebra; differential realization; quadratic operad; nilpotency; polarization.

\bigskip 
 
\noindent {\bf MSC2020}:  
17A30,  17A60.

\medskip


\tableofcontents 
\section{Introduction}
Novikov algebras arose in the study of Hamiltonian operators in the
formal calculus of variations and Poisson brackets of hydrodynamic
type \cite{GelDorf79,DubNov83,bn85}. Their defining identities encode
the Jacobi identity for the corresponding class of homogeneous
first-order Hamiltonian operators. 
Algebraically, Novikov algebras form a distinguished subclass of
pre-Lie algebras and are therefore Lie-admissible
\cite{Burde06}.

On the mock side, replacing skew-symmetry by commutativity while
retaining the Jacobi identity leads to mock-Lie, or Jacobi-Jordan,
algebras \cite{zus2,BF}. This replacement is not merely formal:
although mock-Lie algebras satisfy the Jacobi identity, classical
results such as the Ado and PBW theorems do not extend to them in
general \cite{zus2}. Mock-pre-Lie algebras similarly arise as
counterparts of pre-Lie algebras in the mock setting. Since Novikov
algebras form a distinguished subclass of pre-Lie algebras, one is
naturally led to the parallel picture
\[
\begin{array}{ccc}
\text{Lie} & \longleftarrow & \text{pre-Lie} \supset \text{Novikov},\\
\text{mock-Lie} & \longleftarrow & \text{mock-pre-Lie} \supset ?
\end{array}
\]
This raises the question of which class of algebras should occupy the
missing position and provides one of the motivations for introducing
$\delta$-mock-Novikov algebras.

A second source of motivation comes from the differential
realization of Novikov algebras. Every Novikov algebra over
$\mathbb C$ embeds into one arising from a commutative associative
algebra equipped with a derivation
\cite{ZhangChenBokut2019}. In the classical operadic picture, the
commutative and Lie operads are quadratically dual, while on the
mock side the quadratic dual of the mock-Lie operad is the operad
of anti-commutative anti-associative algebras
\cite{GetzlerKapranov1995,zus2}. This suggests using
anti-commutative anti-associative algebras in place of commutative
associative algebras in a mock differential construction.
Together with the theory of $\delta$-derivations
\cite{fil1} and the corresponding $\delta$-Novikov constructions
\cite{K}, this leads naturally to the class of
$\delta$-mock-Novikov algebras introduced below.

Accordingly, we introduce a parameter-dependent mock analogue of
Novikov algebras, called $\delta$-mock-Novikov algebras
(Definition~\ref{mN}). For $\delta=1$, this notion coincides with
the Novikov-Jacobi-Jordan algebras considered in~\cite{Jie}.
Moreover, after passing to the negative opposite multiplication,
mock-Novikov algebras are identified with the anti-Novikov algebras
studied in~\cite{ChtiouiMabroukMakhlouf2025}.  We also prove that the commutator of a
mock-Novikov algebra defines a Malcev algebra. Thus the
Novikov-Lie commutator construction has a Malcev analogue in the
mock setting.

The operadic behavior provides a first point of comparison with the
classical theory. Operad theory provides a natural framework for studying algebraic
structures defined by multilinear identities
\cite{May1972,LodayVallette2012}. In particular, Ginzburg and
Kapranov established the theory of quadratic duality for operads
\cite{GK94}. For Novikov algebras, Dzhumadil'daev proved that the
left and right Novikov operads are quadratically dual and that the
Novikov operad is not Koszul \cite{Dzhumadildaev2011}. For the mock-Lie operad, however, the situation is different: its
quadratic dual is the operad governing anti-commutative
anti-associative algebras \cite{zus2}. We show that the operad $\mathcal M_\delta$ governing
$\delta$-mock-Novikov algebras is quadratically self-dual, via the
opposite multiplication, but is not Koszul. Thus, from the operadic
viewpoint, $\delta$-mock-Novikov algebras behave more like classical
Novikov algebras than mock-Lie algebras.

The finite-dimensional structure reveals a different pattern.
Zel'manov
proved that every finite-dimensional simple Novikov algebra over an
algebraically closed field of characteristic zero is one-dimensional
\cite{ze87}. More generally, finite-dimensional Novikov algebras
need not be nilpotent. By contrast, finite-dimensional mock-Lie
algebras are nilpotent \cite{zus2}, and we prove that the same holds
for every finite-dimensional $\delta$-mock-Novikov algebra over a
field of characteristic zero. Thus, while the operadic behavior of
$\delta$-mock-Novikov algebras is closer to that of classical
Novikov algebras, their finite-dimensional nilpotency is closer to
the mock-Lie case. Over $\mathbb C$, this strong restriction further
leads to a complete classification in dimensions at most four.

A further connection with the classical theory arises from
Novikov-Poisson algebras. These algebras admit natural tensor-product
constructions \cite{Xu96,XuX}. Transposed Poisson algebras arise naturally from
$\frac12$-derivations of Lie algebras: their compatibility condition
expresses that multiplication by each element of the commutative
associative algebra acts as a $\frac12$-derivation of the Lie bracket
\cite{TP1}. Gelfand-Dorfman algebras provide another classical link
between Novikov and Lie structures; in particular, the commutator of
a Novikov algebra yields a Gelfand-Dorfman algebra
\cite{GelDorf79}, while commutative Novikov Gelfand--Dorfman
algebras are closely related to transposed Poisson algebras
\cite{SartayevTP}.

Guided by these classical constructions, we replace the commutative
associative and Lie structures by their mock counterparts and study
the resulting Poisson-type objects. This leads naturally to
$\delta$-mock-Novikov-Poisson, $\delta$-mock-Poisson, transposed
$\delta$-mock-Poisson, and $\delta$-mock-Gelfand-Dorfman algebras.
We establish the corresponding compatibility conditions and relate
these structures through polarization and depolarization, following
the one-operation viewpoint of Markl and Remm
\cite{MarklRemm}. We also obtain tensor-product constructions linking
the mock and classical settings.

The main objective of this paper is to develop the structural theory
of $\delta$-mock-Novikov algebras and to investigate their relationship
with the classical theory of Novikov algebras. Using anti-commutative anti-associative algebras endowed with
$\delta$-derivations, we construct
$\delta$-mock-Novikov algebras and formulate the corresponding notion
of differential speciality.  At the operadic level, we prove that the
binary quadratic operad governing $\delta$-mock-Novikov algebras is
quadratically self-dual but not Koszul. To
study the finite-dimensional case, we use left and right multiplication
operators together with Jacobson's weakly closed theorem to prove
nilpotency. By analyzing the square $\mathcal{A}^2$ in low dimensions,
we show that the classification in dimensions at most four reduces to
the known classification of complex two-step nilpotent algebras. We also develop the associated Poisson-type structures and establish their relationships through
polarization, depolarization, and tensor-product constructions.

The paper is organized as follows. Section~\ref{sec2} introduces
$\delta$-mock-Novikov algebras and develops their basic theory,
including the loop construction and differential realizations
(Theorem~\ref{prop:two-step-special}).
Section~\ref{sec3} studies quadratic duality and Koszulity of the
associated operad
(Theorems~\ref{thm:quadratic-dual-delta-mN}
and~\ref{thm:delta-mN-non-Koszul}).
Section~\ref{sec4} establishes the nilpotency of finite-dimensional
$\delta$-mock-Novikov algebras over fields of characteristic zero
(Theorem~\ref{thm:nilpotent}).
Section~\ref{sec5} gives the classification in dimensions at most four
(Theorem~\ref{thm:two-step}) and presents a five-dimensional example
with $\mathcal{A}^3\neq0$, showing that the conclusion
$\mathcal{A}^2\subseteq\operatorname{Ann}(\mathcal{A})$ valid in
dimensions at most four does not extend to dimension five.
Finally, Section~\ref{sec6} studies the associated Poisson-type
structures and the constructions and correspondences among them
(see diagram~\eqref{diag:mock-structures} and
Theorem~\ref{thm:tensor-unified}).

Throughout the paper, unless explicitly stated otherwise, the ground
field is $\mathbb C$ and $\delta\in\mathbb C^\times$. 
Whenever the relevant multiplication is clear from the context, we
suppress its symbol and write $xy$ for $x\circ y$ or $x\cdot y$, as
appropriate.

\section{The differential realizations of \texorpdfstring{$\delta$}{delta}-mock-Novikov algebras}\label{sec2}
This section develops the basic theory of $\delta$-mock-Novikov
algebras, including their left and right versions, symmetrization,
and loop construction. We then study differential realizations
arising from $\delta$-derivations of anti-commutative
anti-associative algebras and introduce the notion of differential
speciality.

Let $(\mathcal{A},\circ)$ be an algebra. For $a,b,c\in \mathcal{A}$, define the
\emph{$\delta$-anti-associator} by
\[
(a,b,c)_{+}^{\delta}
=\delta(a\circ b)\circ c+a\circ(b\circ c).
\]

\begin{definition}
An algebra $(\mathcal{A},\circ)$ is called a \emph{left
$\delta$-mock-pre-Lie algebra} if
$(a,b,c)_{+}^{\delta}=-(b,a,c)_{+}^{\delta}$ for all
$a,b,c\in \mathcal{A}$; equivalently,
\begin{equation}\label{deltaMN1}
\delta(a\circ b)\circ c+\delta(b\circ a)\circ c
+a\circ(b\circ c)+b\circ(a\circ c)=0.
\end{equation}
\end{definition}

For $\delta=1$, this reduces to the usual mock-pre-Lie
(pre-Jacobi-Jordan) identity \cite{bbmm}. Similarly, a \emph{right
$\delta$-mock-pre-Lie algebra} is defined by
$(a,b,c)_{+}^{\delta}=-(a,c,b)_{+}^{\delta}$. For $\delta=-1$ we have the definition of anti-mock-pre-Lie algebras introduced in \cite{AAA}.

\begin{definition}\label{mN}
A \emph{left $\delta$-mock-Novikov algebra} is a left
$\delta$-mock-pre-Lie algebra $(\mathcal{A},\circ)$ satisfying the
right-anti-commutative identity
\begin{equation}\label{deltaMN2}
(a\circ b)\circ c+(a\circ c)\circ b=0,
\qquad a,b,c\in \mathcal{A}.
\end{equation}
\end{definition}

The right version is defined analogously. Passing to the opposite
multiplication $a\ast b=b\circ a$ interchanges left
$\delta$-mock-Novikov algebras with right
$\delta^{-1}$-mock-Novikov algebras. Henceforth, we consider only the
left version and omit the qualifier \emph{left}.

Recall that an algebra $(\mathcal{A},\circ)$ is \emph{anti-commutative} if
$x\circ y=-y\circ x$, and \emph{anti-associative} if
$(x\circ y)\circ z=-x\circ(y\circ z)$
for all $x,y,z\in \mathcal{A}$.
The
anti-commutative case is particularly natural in the mock setting and
turns out to recover precisely the class of anti-commutative
anti-associative algebras.

\begin{proposition}\label{prop:anti-mN}
Let $(\mathcal{A},\circ)$ be an anti-commutative algebra. Then $(\mathcal{A},\circ)$ is a
$\delta$-mock-Novikov algebra if and only if it is anti-associative.
\end{proposition}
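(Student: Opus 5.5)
The plan is a direct manipulation of the two defining identities \eqref{deltaMN1} and \eqref{deltaMN2}, using anti-commutativity to rewrite each of them. The first step is to simplify \eqref{deltaMN1}. Since $b\circ a=-a\circ b$, the two $\delta$-terms $\delta(a\circ b)\circ c+\delta(b\circ a)\circ c$ cancel identically. Hence, for an anti-commutative algebra, \eqref{deltaMN1} is equivalent to
\[
a\circ(b\circ c)+b\circ(a\circ c)=0 .
\]
This holds independently of $\delta$, so the parameter plays no role in the statement. The second step is to rewrite \eqref{deltaMN2}: applying anti-commutativity to the term $(a\circ c)\circ b$ turns it into the equivalent form
\[
(a\circ b)\circ c=b\circ(a\circ c).
\]

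For the implication ``$\delta$-mock-Novikov $\Rightarrow$ anti-associative'', chain the two reduced identities:
\[
(a\circ b)\circ c=b\circ(a\circ c)=-a\circ(b\circ c),
\]
which is exactly anti-associativity.

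For the converse, assume anti-associativity and verify both reduced identities. For the first,
\[
a\circ(b\circ c)=-(a\circ b)\circ c=(b\circ a)\circ c=-b\circ(a\circ c),
\]
so $a\circ(b\circ c)+b\circ(a\circ c)=0$, and hence \eqref{deltaMN1} holds. For the second,
\[
(a\circ b)\circ c+(a\circ c)\circ b=-a\circ(b\circ c)-a\circ(c\circ b)=-a\circ(b\circ c)+a\circ(b\circ c)=0,
\]
which gives \eqref{deltaMN2}.

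There is no genuine obstacle here. The only point needing care is the first reduction, namely observing that the $\delta$-terms cancel under anti-commutativity. That cancellation is what makes the equivalence independent of $\delta$ and valid over any field.
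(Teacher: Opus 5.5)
Your proof is correct and follows essentially the same route as the paper: cancel the $\delta$-terms to reduce \eqref{deltaMN1} to $a\circ(b\circ c)+b\circ(a\circ c)=0$, chain this with \eqref{deltaMN2} for the forward direction, and check both identities directly from anti-associativity and anti-commutativity for the converse. In the converse you derive $a\circ(b\circ c)=-b\circ(a\circ c)$ explicitly, which makes visible the use of anti-commutativity that the paper attributes only to anti-associativity.
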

\begin{proof}
Assume first that $(\mathcal{A},\circ)$ is a $\delta$-mock-Novikov algebra. Since $\circ$ is anti-commutative, \eqref{deltaMN1} reduces to
$x\circ(y\circ z)+y\circ(x\circ z)=0$. Moreover, by \eqref{deltaMN2},
\[
(x\circ y)\circ z=-(x\circ z)\circ y
=y\circ(x\circ z)
=-x\circ(y\circ z),
\]
so $(\mathcal{A},\circ)$ is anti-associative.

Conversely, suppose that $(\mathcal{A},\circ)$ is anti-commutative and anti-associative. Then
\[
(x\circ y)\circ z+(x\circ z)\circ y
=-x\circ(y\circ z)-x\circ(z\circ y)=0,
\]
so \eqref{deltaMN2} holds. Also, anti-commutativity gives
$(x\circ y)\circ z+(y\circ x)\circ z=0$, while anti-associativity implies
$x\circ(y\circ z)+y\circ(x\circ z)=0$. Hence \eqref{deltaMN1} holds as well.
\end{proof}

The commutative case is highly restrictive: commutativity together
with the right-anti-commutative identity already forces all triple
products to vanish.

\begin{proposition}
Let $(\mathcal{A},\circ)$ be a commutative $\delta$-mock-Novikov algebra over a
field of characteristic different from $2$. Then
\[
\mathcal{A}^3=0.
\]
\end{proposition}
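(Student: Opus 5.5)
We need $(a\circ b)\circ c=0$ and $a\circ(b\circ c)=0$ for all $a,b,c$. By commutativity the second is the first with arguments permuted, since $a\circ(b\circ c)=(b\circ c)\circ a$. So it suffices to show that every product of the form $(xy)z$ vanishes. We use only commutativity together with the right-anti-commutative identity \eqref{deltaMN2}; identity \eqref{deltaMN1} is not needed.

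Write $f(a,b,c)=(ab)c$. Commutativity of the inner product gives $f(a,b,c)=f(b,a,c)$, so $f$ is symmetric in its first two arguments. The identity \eqref{deltaMN2} says $f(a,b,c)=-f(a,c,b)$, so $f$ is antisymmetric in its last two arguments.

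Now chase the transpositions:
\[
f(a,b,c)=-f(a,c,b)=-f(c,a,b)=f(c,b,a)=f(b,c,a)=-f(b,a,c)=-f(a,b,c).
\]
The steps alternate between the antisymmetry in the last two slots and the symmetry in the first two. Hence $2f(a,b,c)=0$, and since $\operatorname{char}\neq 2$ we get $(ab)c=0$ for all $a,b,c$.

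Consequently $a(bc)=(bc)a=0$ as well. As $\mathcal{A}^3=\mathcal{A}^2\mathcal{A}+\mathcal{A}\mathcal{A}^2$ is spanned by these products, $\mathcal{A}^3=0$. The only point needing care is the composition of transpositions: the six-step cycle produces an odd number of sign changes, and that is exactly where the characteristic assumption enters. Otherwise the argument is routine.
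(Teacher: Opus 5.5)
Your proof is correct: $(ab)c$ is symmetric in its first two arguments by commutativity and antisymmetric in its last two by \eqref{deltaMN2}, so your six-step transposition chain gives $2(ab)c=0$, and commutativity then forces $a(bc)=(bc)a=0$ as well. The paper states this proposition without a written proof, but the remark just before it points to your route, namely commutativity together with the right-anti-commutative identity alone, with \eqref{deltaMN1} not needed.
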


The classical commutator of a Novikov algebra defines a Lie algebra.
In the mock setting, the corresponding construction is given by
symmetrization.

\begin{proposition}\label{bdnov}
Let $(\mathcal{A},\circ)$ be a $\delta$-mock-Novikov algebra and define
\[
\{x,y\}=x\circ y+y\circ x.
\]
Then $(\mathcal{A},\{-,-\})$ is a mock-Lie algebra.
\end{proposition}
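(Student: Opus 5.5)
The plan is a direct verification: commutativity of $\{-,-\}$ is immediate from its definition, so the whole task is the Jacobi identity
\[
\{\{x,y\},z\}+\{\{y,z\},x\}+\{\{z,x\},y\}=0 .
\]
I will expand each term as $\{\{x,y\},z\}=(x\circ y)\circ z+(y\circ x)\circ z+z\circ(x\circ y)+z\circ(y\circ x)$. This splits the cyclic sum into two parts. The first part is $S_1$, the sum of the six ``left-bracketed'' terms $(a\circ b)\circ c$ over all permutations $(a,b,c)$ of $(x,y,z)$. The second part is $S_2$, the sum of the six ``right-bracketed'' terms $a\circ(b\circ c)$ over all permutations. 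I will show $S_1=0$ and $S_2=0$ separately.

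For $S_1$, I will use only the right-anti-commutative identity \eqref{deltaMN2}. I group the six terms according to the leftmost factor:
\[
S_1=\bigl[(x\circ y)\circ z+(x\circ z)\circ y\bigr]+\bigl[(y\circ x)\circ z+(y\circ z)\circ x\bigr]+\bigl[(z\circ x)\circ y+(z\circ y)\circ x\bigr].
\]
Each bracket vanishes by \eqref{deltaMN2}, so $S_1=0$.

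For $S_2$, I will use the left $\delta$-mock-pre-Lie identity \eqref{deltaMN1} in the form
\[
a\circ(b\circ c)+b\circ(a\circ c)=-\delta\bigl((a\circ b)\circ c+(b\circ a)\circ c\bigr).
\]
I apply it once for each choice of the rightmost variable $c\in\{x,y,z\}$, with $\{a,b\}$ the other two variables. Summing the three instances, the left-hand sides give exactly $S_2$, since each permutation occurs once. The right-hand sides give $-\delta S_1$. Hence $S_2=-\delta S_1=0$ by the previous step. This does not even require $\delta\neq0$ or any restriction on the characteristic.

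There is no real obstacle here. The only point needing care is the bookkeeping: checking that the cyclic Jacobi sum really produces each of the six permutations exactly once in both $S_1$ and $S_2$, and that the three instances of \eqref{deltaMN1} cover each right-bracketed permutation exactly once. I expect that check to be the main thing to verify when writing out the details.
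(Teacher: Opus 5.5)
Your proposal is correct and follows essentially the same route as the paper: your $S_1$ is the paper's cyclic sum $\{x,y\}\circ z+\{y,z\}\circ x+\{z,x\}\circ y$, which vanishes by \eqref{deltaMN2}, and your $S_2$ is $x\circ\{y,z\}+y\circ\{z,x\}+z\circ\{x,y\}$, which vanishes after summing \eqref{deltaMN1} cyclically and using $S_1=0$. Your bookkeeping is right (each permutation appears exactly once in both sums), and so is your remark that neither $\delta\neq0$ nor any characteristic assumption is used.
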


\begin{proof}
The multiplication $\{-,-\}$ is symmetric. By \eqref{deltaMN2},
\[
\{x,y\}\circ z+\{y,z\}\circ x+\{z,x\}\circ y=0.
\]
Summing \eqref{deltaMN1} cyclically and using the above
identity gives
$x\circ\{y,z\}+y\circ\{z,x\}+z\circ\{x,y\}=0$. Consequently,
\[
\{\{x,y\},z\}+\{\{y,z\},x\}+\{\{z,x\},y\}=0,
\]
which proves the assertion.
\end{proof}
Recall that an anti-commutative algebra
$(\mathcal M,[-,-])$ is called a \emph{Malcev algebra} if
\[
J(x,y,[x,z])=[J(x,y,z),x],
\]
where
\[
J(x,y,z)=[[x,y],z]+[[y,z],x]+[[z,x],y]
\]
is the Jacobian; see, for example,~\cite{Sagle1961}.
For mock-Novikov algebras, the commutator always satisfies this
identity. 
\begin{proposition}\label{prop:mN-Malcev}
Let $(\mathcal A,\circ)$ be a mock-Novikov algebra over a field of
characteristic zero. Define
$[x,y]=x\circ y-y\circ x.$
Then $(\mathcal A,[-,-])$ is a Malcev algebra.
\end{proposition}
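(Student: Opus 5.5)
Our plan is to reduce the Malcev identity to a manageable computation by first finding a compact formula for the Jacobian of the commutator in terms of $\circ$. We set $\delta=1$ throughout. The first step is to show that for a mock-Novikov algebra the Jacobian $J(x,y,z)$ of $[x,y]=x\circ y-y\circ x$ is expressible through a single cubic expression.

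Expanding $[[x,y],z]$ gives four terms: $(x\circ y)\circ z-(y\circ x)\circ z-z\circ(x\circ y)+z\circ(y\circ x)$. Summing cyclically, we will use \eqref{deltaMN2} to rewrite right-nested products $(a\circ b)\circ c$ so that only one ordering of the outer arguments survives. We then use \eqref{deltaMN1} with $\delta=1$, in the form
\[
(a\circ b+b\circ a)\circ c=-a\circ(b\circ c)-b\circ(a\circ c),
\]
to convert the remaining left-nested terms. Our expectation, guided by Proposition~\ref{bdnov}, is that $J(x,y,z)$ collapses to a scalar multiple of the totally skew-symmetrized expression
\[
S(x,y,z)=\sum_{\sigma\in S_3}\operatorname{sgn}(\sigma)\,x_{\sigma(1)}\circ(x_{\sigma(2)}\circ x_{\sigma(3)}),
\]
or a similar alternating sum of the form $\sum_{\mathrm{cyc}}(x\circ y)\circ z$. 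The aim is $J=c\cdot S$ for an explicit constant $c$ (we anticipate $c=\pm2$ or $\pm3$), which is where characteristic zero, or at least invertibility of small integers, enters.

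Next we would establish degree-four consequences of the defining identities. Right multiplications satisfy $R_cR_b=-R_bR_c$ by \eqref{deltaMN2}. Identity \eqref{deltaMN1} says $L_{\{a,b\}}$ is expressed via right actions, and it gives $L_aL_b+L_bL_a=-R_c$-type relations. From these we aim to derive that every product of four elements with a nontrivial skew pattern reduces to a normal form, for example words $((a\circ b)\circ c)\circ d$ that are alternating in $b,c,d$. Then the Malcev identity $J(x,y,[x,z])-[J(x,y,z),x]=0$, a multilinear degree-4 identity after linearization in $x$, becomes a linear combination of normal-form words that we check cancels.

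The main obstacle is this final degree-four verification. The number of monomials is large, and the reduction to normal form must be done consistently. We would first try to organize it by proving that $J(x,y,z)\circ w$, $w\circ J(x,y,z)$ and $J$ evaluated at a product each reduce to multiples of one fully alternating quartic $Q(x,y,z,w)$. With this in hand, the Malcev identity, which is known to be equivalent over characteristic zero to $J(x,y,[x,z])=[J(x,y,z),x]$, follows by comparing coefficients. If hand reduction becomes unwieldy, we would fall back on a computer check of the multilinear identity in the free mock-Novikov algebra of degree four, which is finite-dimensional and can be handled by linear algebra on the $S_4$-module of multilinear monomials.
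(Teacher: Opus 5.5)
\textbf{There is a genuine gap in your main route.} Both simplifications it rests on are false in the free mock-Novikov algebra. The only part of your plan that actually proves the statement is the computer fallback, and that is precisely the paper's proof. The paper does exact row reduction in degree four, working directly in the $60$-dimensional multidegree-$(2,1,1)$ component rather than linearizing; in characteristic zero the two are equivalent.

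\emph{The first simplification fails.} $J$ does not collapse to a multiple of a single alternating cubic. In the free nonassociative algebra one has identically $J=A_L-S$, where $A_L=\sum_{\sigma}\operatorname{sgn}(\sigma)(x_{\sigma(1)}\circ x_{\sigma(2)})\circ x_{\sigma(3)}$, and $A_L\equiv 2\sum_{\mathrm{cyc}}(x\circ y)\circ z$ by \eqref{deltaMN2}. Moreover $A_L$ and $S$ remain linearly independent modulo the defining relations. The reason is that the arity-three relation module is spanned by two $3$-dimensional orbits, each generated by a vector fixed by a transposition. It is therefore $2\,\mathrm{triv}\oplus2\,\mathrm{std}$ as an $S_3$-module and contains no nonzero alternating element. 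Hence $J$ is neither $cS$ nor $c\sum_{\mathrm{cyc}}(x\circ y)\circ z$ for any constant $c$.

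\emph{The second simplification fails, and this is decisive.} $J(x,y,z)\circ w$, $w\circ J(x,y,z)$ and $J(x,y,z\circ w)$ cannot all be multiples of one fully alternating quartic $Q$. If they were, putting $w=x$ would make both sides of $J(x,y,[x,z])=[J(x,y,z),x]$ vanish, and they do not.

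Carrying out the reduction, the multidegree-$(2,1,1)$ component is $4$-dimensional, with basis
$\Phi=((x\circ y)\circ z)\circ x$, $T_1=(x\circ(y\circ z))\circ x$, $T_2=(x\circ(z\circ y))\circ x$, $T_3=x\circ(y\circ(z\circ x))$. In this basis:
\begin{itemize}
\item $J(x,y,z)\circ x=2T_2-2T_1\neq0$;
\item $x\circ J(x,y,z)=6\Phi-2T_1+2T_2-2T_3$;
\item $J(x,y,x\circ z)=-4\Phi$;
\item $J(x,y,z\circ x)=2\Phi-2T_3$.
\end{itemize}
So both sides of the Malcev identity equal $-6\Phi+2T_3\neq0$. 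This is the paper's normal form $-2(y\circ z)\circ(x\circ x)+2z\circ((x\circ y)\circ x)-2z\circ(y\circ(x\circ x))$, rewritten in this basis.

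The Malcev identity is therefore a genuine coincidence of two nonzero elements, not a consequence of alternation or vanishing. Your proposed normal form by left-normed words $((a\circ b)\circ c)\circ d$, alternating in $b,c,d$, cannot see it. Such words span at most a $4$-dimensional subspace of the $14$-dimensional multilinear component $\mathcal M_1(4)$, and in multidegree $(2,1,1)$ they produce only $\pm\Phi$. The full normal-form computation, i.e.\ your fallback, is what is actually needed.
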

\begin{proof}
Let $\mathcal F$ be the free nonassociative algebra on $x,y,z$, and
let $\mathcal F_{(2,1,1)}$ denote its multihomogeneous component
spanned by all nonassociative monomials in which $x$ occurs twice
and $y,z$ occur once. Since there are five association types and
twelve distinct permutations of $x,x,y,z$, we have
$\dim \mathcal F_{(2,1,1)}=60$.
Let
\[
F(a,b,c)=(a\circ b)\circ c+(a\circ c)\circ b
\]
and
\[
G(a,b,c)=(a\circ b)\circ c+(b\circ a)\circ c
+a\circ(b\circ c)+b\circ(a\circ c).
\]
The defining identities \eqref{deltaMN1}--\eqref{deltaMN2} are  $F=G=0$.
Let $\mathcal I$ be the ideal of polynomial identities generated by
$F$ and $G$, and put
$\mathcal I_{(2,1,1)}
=
\mathcal I\cap\mathcal F_{(2,1,1)}$.
Equivalently, $\mathcal I_{(2,1,1)}$ is spanned by the
degree-four consequences of $F$ and $G$ of multidegree $(2,1,1)$,
obtained by multiplying these identities on either side by a fourth
variable and by replacing one argument by a product of two variables.
Exact row reduction gives
\[
\dim\bigl(
\mathcal F_{(2,1,1)}/\mathcal I_{(2,1,1)}
\bigr)=4,
\]
with quotient basis represented by
\[
(y\circ z)\circ(x\circ x),\quad
z\circ((x\circ y)\circ x),\quad
z\circ(y\circ(x\circ x)),\quad
(z\circ y)\circ(x\circ x).
\]
Expanding the commutator and reducing modulo
$\mathcal I_{(2,1,1)}$ gives the same normal form for the two sides
of the Malcev identity, namely
\[
-2(y\circ z)\circ(x\circ x)
+2z\circ((x\circ y)\circ x)
-2z\circ(y\circ(x\circ x)).
\]
Thus
$J(x,y,[x,z])
=[J(x,y,z),x]$.
\end{proof}
Another fundamental relation between Novikov and Lie algebras is the
Balinsky-Novikov affinization \cite{bn85}. For a Novikov algebra $(\mathcal{A},\circ)$, the Balinsky-Novikov affinization
endows the loop space
$\mathcal{A}[t,t^{-1}]=\mathcal{A}\otimes\mathbb C[t,t^{-1}]$ with a Lie algebra
structure. Its mock analogue is obtained by replacing
antisymmetrization with symmetrization. Thus, define on
$\mathcal{A}[t,t^{-1}]$
\begin{equation}\label{eq:loop-product}
\{at^i,bt^j\}
=i(a\circ b)t^{i+j-1}
+j(b\circ a)t^{i+j-1},
\qquad a,b\in \mathcal{A},\quad i,j\in\mathbb Z.
\end{equation}

\begin{theorem}
The multiplication \eqref{eq:loop-product} defines a mock-Lie algebra
structure on $\mathcal{A}[t,t^{-1}]$ if and only if $(\mathcal{A},\circ)$ is a
mock-Novikov algebra, that is, a $1$-mock-Novikov algebra.
\end{theorem}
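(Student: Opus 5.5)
Symmetry of the bracket \eqref{eq:loop-product} is immediate: swapping $(a,i)$ with $(b,j)$ gives the same expression. So the whole statement reduces to comparing the Jacobi identity
\[
\{\{at^i,bt^j\},ct^k\}+\{\{bt^j,ct^k\},at^i\}+\{\{ct^k,at^i\},bt^j\}=0
\]
with the two mock-Novikov identities \eqref{deltaMN1} and \eqref{deltaMN2} for $\delta=1$.

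\textbf{Expanding the Jacobiator.} For the first term,
\[
\{\{at^i,bt^j\},ct^k\}
=(i+j-1)\bigl(i\,(ab)c+j\,(ba)c\bigr)t^{n}
+k\bigl(i\,c(ab)+j\,c(ba)\bigr)t^{n},
\]
where $n=i+j+k-2$. Summing cyclically, the coefficient of $t^n$ is a polynomial in $i,j,k$. Its coefficients are the twelve monomials $(xy)z$ and $x(yz)$ over the orderings of $a,b,c$. I will group it by the monomials
\[
i^2,\ j^2,\ k^2,\ ij,\ jk,\ ki,\ i,\ j,\ k .
\]
Since $i,j,k$ range over all of $\mathbb Z$, the Jacobiator vanishes identically if and only if every one of these polynomial coefficients vanishes. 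For the converse direction this requires the standard fact that a polynomial vanishing on $\mathbb Z^3$ is zero, which holds because the field has characteristic zero.

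\textbf{Reading off the coefficients.} I expect the following pattern.
\begin{itemize}
\item The $i^2$ coefficient is $(ab)c+(ac)b$, which is exactly \eqref{deltaMN2}. By symmetry, $j^2$ and $k^2$ give the same identity with the variables permuted.
\item The linear $i$ coefficient comes only from the $-1$ in $(i+j-1)$. It is $-(ab)c-(ac)b$, which is again \eqref{deltaMN2}.
\item The mixed $ij$ coefficient collects $(ab)c+(ba)c$ from the factor $(i+j-1)$. It also collects the terms $c(ab)$, $c(ba)$, $a(bc)$, $b(ac)$ that come from the outer $k$-type factor in the other cyclic terms. I need to track these carefully. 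I expect the total to be
\[
(ab)c+(ba)c+a(bc)+b(ac)
\]
plus possibly a multiple of an instance of \eqref{deltaMN2}. That is \eqref{deltaMN1} with $\delta=1$.
\end{itemize}

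\textbf{Concluding both directions.} If $\mathcal A$ is mock-Novikov, every coefficient is a combination of \eqref{deltaMN1} and \eqref{deltaMN2}, so the Jacobiator vanishes. Conversely, suppose the Jacobi identity holds for all $i,j,k$. Vanishing of the $i^2$ coefficient gives \eqref{deltaMN2}, and then the $ij$ coefficient gives \eqref{deltaMN1} with $\delta=1$. Alternatively, one can substitute small values such as $(i,j,k)=(2,0,0)$ and $(1,1,0)$ and combine them, avoiding the polynomial argument altogether.

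This also explains why the theorem needs $\delta=1$: the loop bracket carries no parameter, so only the identity with coefficient $1$ on $(ab)c+(ba)c$ can appear. The main obstacle is the bookkeeping in the mixed coefficient. I will organise it by writing each cyclic term's contribution to $ij$, $jk$ and $ki$ separately, so that the left-multiplication terms from the outer factor line up correctly with the right-multiplication terms.
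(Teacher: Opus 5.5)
Your proposal is correct and follows essentially the same route as the paper, which likewise expands the cyclic sum as $i(i-1)$, $j(j-1)$, $k(k-1)$ times instances of \eqref{deltaMN2} plus $ij$, $ik$, $jk$ times instances of \eqref{deltaMN1} with $\delta=1$; your hedge is unnecessary, since the $ij$ coefficient is exactly $(ab)c+(ba)c+a(bc)+b(ac)$ with no extra multiple of \eqref{deltaMN2}. For the converse, the paper uses precisely your specializations $j=k=0$ with $i(i-1)\neq0$ and $(i,j,k)=(1,1,0)$, so the polynomial-vanishing argument is not needed.
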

\begin{proof}
The multiplication \eqref{eq:loop-product} is symmetric. For
$a,b,c\in \mathcal{A}$ and $i,j,k\in\mathbb Z$, a direct computation gives
\[
\begin{aligned}
&\{at^i,\{bt^j,ct^k\}\}
+\{bt^j,\{ct^k,at^i\}\}
+\{ct^k,\{at^i,bt^j\}\}                                      \\
={}&\Big[
i(i-1)\big((a\circ b)\circ c+(a\circ c)\circ b\big)\\
&\quad+j(j-1)\big((b\circ c)\circ a+(b\circ a)\circ c\big)\\
&\quad+k(k-1)\big((c\circ a)\circ b+(c\circ b)\circ a\big)\\
&\quad+ij\big((a\circ b)\circ c+(b\circ a)\circ c
+a\circ(b\circ c)+b\circ(a\circ c)\big)\\
&\quad+ik\big((a\circ c)\circ b+(c\circ a)\circ b
+a\circ(c\circ b)+c\circ(a\circ b)\big)\\
&\quad+jk\big((b\circ c)\circ a+(c\circ b)\circ a
+b\circ(c\circ a)+c\circ(b\circ a)\big)
\Big]t^{i+j+k-2}.
\end{aligned}
\]
If the mock-Jacobi identity holds, taking $j=k=0$ with
$i(i-1)\neq0$ gives \eqref{deltaMN2}. Taking $k=0$ and $i=j=1$
then gives \eqref{deltaMN1} with $\delta=1$.

Conversely, if $(\mathcal{A},\circ)$ is a mock-Novikov algebra, all terms in
the above expression vanish by \eqref{deltaMN1} and
\eqref{deltaMN2}. Hence the mock-Jacobi identity holds.
\end{proof}
\begin{remark}
The preceding loop construction is also the coefficient-algebra form
of a quadratic Jacobi-Jordan conformal algebra. Indeed, let
$\operatorname{Conf}(\mathcal{A})=\mathbb{C}[\partial]\otimes \mathcal{A}$. By
\cite[Theorem~3]{ChtiouiMabroukMakhlouf2025}, after passing to the
negative opposite product $a\diamond b=-b\circ a$, the $\lambda$-product
\[
a_\lambda b
=-\partial(b\circ a)+\lambda(a\circ b-b\circ a)
\]
defines a quadratic Jacobi-Jordan conformal algebra on $\operatorname{Conf}(\mathcal{A})$ if and
only if $(\mathcal{A},\circ)$ is a mock-Novikov algebra. Its coefficient algebra
has multiplication
\[
a_m b_n
=m(a\circ b)_{m+n-1}+n(b\circ a)_{m+n-1},
\]
which agrees with \eqref{eq:loop-product} under the identification
$a_m\leftrightarrow at^m$.
\end{remark}

A classical theorem states that every Novikov algebra can be embedded
into a suitable commutative associative algebra equipped with a
derivation, with the Novikov multiplication induced by the
derivation; see, for example, \cite{Bokut}. This raises the analogous
realization problem in the mock setting.

For $\delta=1$, a construction in \cite{Jie} starts from an
anti-commutative associative algebra and an anti-derivation.
However, over a field of characteristic different from $2$, every
anti-commutative associative algebra satisfies $\mathcal{B}^3=0$, so this
construction produces only algebras whose triple products vanish.
We therefore replace associativity
with anti-associativity, obtaining a broader differential construction
that allows nonzero triple products.

\begin{definition}
A \emph{$\delta$-differential anti-commutative anti-associative
algebra} is a triple $(\mathcal{B},\ast,\varphi)$ such that $(\mathcal{B},\ast)$ is
anti-commutative and anti-associative and $\varphi$ is a
$\delta$-derivation:
\[
\varphi(x\ast y)
=\delta\bigl(\varphi(x)\ast y+x\ast\varphi(y)\bigr),
\qquad x,y\in \mathcal{B}.
\]
For $\delta=1$, we simply call it a
\emph{differential anti-commutative anti-associative algebra}.
\end{definition}

For a nonassociative algebra $\mathcal{B}$, let $\mathcal{B}^n$ denote the span of all
products of $n$ elements with arbitrary bracketing, and set
\[
\operatorname{Ann}(\mathcal{B})
=\{x\in \mathcal{B}\mid x\ast \mathcal{B}=\mathcal{B}\ast x=0\}.
\]

\begin{lemma}\label{lem:acaa}
Let $\mathbb F$ be a field of characteristic different from $2$, let
$\delta\in\mathbb F^\times$, and let $(\mathcal{B},\ast,\varphi)$ be a
$\delta$-differential anti-commutative anti-associative algebra over
$\mathbb F$. Then:
\begin{enumerate}[\rm (i)]
\item $\mathcal{B}^4=0$, and consequently
      $\mathcal{B}^3\subseteq\operatorname{Ann}(\mathcal{B})$;
\item $\varphi(\mathcal{B}^n)\subseteq \mathcal{B}^n$ for every $n\geq1$, and
      $\varphi(\operatorname{Ann}(\mathcal{B}))
      \subseteq\operatorname{Ann}(\mathcal{B})$;
\item the trilinear map $T(x,y,z)=(x\ast y)\ast z$ is alternating.
\end{enumerate}
\end{lemma}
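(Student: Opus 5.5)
I would start with part (iii), since parts (i) and (ii) both rest on the symmetry properties of $T(x,y,z)=(x\ast y)\ast z$. By anti-commutativity, $T(y,x,z)=(y\ast x)\ast z=-T(x,y,z)$, so $T$ is skew in its first two arguments. Anti-associativity and anti-commutativity together give
\[
T(x,y,z)=-x\ast(y\ast z)=(y\ast z)\ast x=T(y,z,x),
\]
so $T$ is invariant under cyclic permutations. A $3$-cycle together with one transposition generates $S_3$, and the sign of the cycle is $+1$, so $T(x_{\sigma1},x_{\sigma2},x_{\sigma3})=\operatorname{sgn}(\sigma)\,T(x_1,x_2,x_3)$. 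In characteristic $\neq2$, skew-symmetry in two arguments already gives $T(x,x,z)=0$. Hence $T$ is alternating.

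For (i), anti-associativity shows that every product of three elements, in either bracketing, equals $\pm T$ of those elements, so $\mathcal B^3$ is spanned by the values $T(x,y,z)$. A product of four elements in any bracketing reduces, by anti-associativity, to $\pm((x\ast y)\ast z)\ast w$. The case $(x\ast y)\ast(z\ast w)$ is included, since it equals $-((x\ast y)\ast z)\ast w$. Now set $P=((x\ast y)\ast z)\ast w=T(x\ast y,z,w)$. By (iii) this is alternating in the three slots $(x\ast y),z,w$, so
\[
P=T(z,w,x\ast y)=(z\ast w)\ast(x\ast y).
\]
Applying anti-associativity twice to $(z\ast w)\ast(x\ast y)$ gives
\[
(z\ast w)\ast(x\ast y)=-((z\ast w)\ast x)\ast y=-T(z\ast w,x,y).
\]
By (iii) again, $T(z\ast w,x,y)=T(x,y,z\ast w)=(x\ast y)\ast(z\ast w)=-P$. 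Therefore $P=P$, which is no information, and a better route is needed.

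The better route is to exploit the full $S_4$ symmetry. I would show that the quadrilinear form $Q(x,y,z,w)=((x\ast y)\ast z)\ast w$ is skew under swapping $x,y$, and under cycling $(x,y,z)$ by (iii). For the cycle $(y,z,w)$ I would write $Q=T(x\ast y,z,w)=T(z,w,x\ast y)=-T(z\ast w,x,y)$-type rewrites. Using $Q=-(x\ast y)\ast(z\ast w)$, anti-commutativity of the outer product gives $Q(x,y,z,w)=Q(z,w,x,y)\cdot(-1)\cdot(-1)$ type relations. I expect this sign bookkeeping to be the main obstacle. The goal is to find a permutation $\sigma$ for which the symmetries force $Q=\epsilon Q$ with $\epsilon$ differing from the sign the alternating structure predicts, giving $2Q=0$. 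My first attempt: $Q=-(x\ast y)\ast(z\ast w)=(z\ast w)\ast(x\ast y)=-Q(z,w,x,y)$. Also $Q(x,y,z,w)=T(x\ast y,z,w)=T(w,x\ast y,z)=(w\ast(x\ast y))\ast z$. Expanding $w\ast(x\ast y)=-(w\ast x)\ast y$ gives $Q=-Q(w,x,y,z)$. So a $4$-cycle acts by $-1$, and applying it four times is consistent, so I would instead combine it with the even double transposition $(xz)(yw)$, which also acts by $-1$. Since the double transposition is even, the $A_4$ symmetry predicts $+1$, giving $Q=-Q$ and hence $Q=0$. With $\mathcal B^4=0$ established, $\mathcal B^3\ast\mathcal B$ and $\mathcal B\ast\mathcal B^3$ lie in $\mathcal B^4=0$, so $\mathcal B^3\subseteq\operatorname{Ann}(\mathcal B)$.

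For (ii), I would use induction on $n$, with the $\delta$-derivation rule applied to a product $u\ast v$ where $u\in\mathcal B^i$, $v\in\mathcal B^j$, $i+j=n$. This gives $\varphi(u\ast v)=\delta(\varphi(u)\ast v+u\ast\varphi(v))\in\mathcal B^n$. For $a\in\operatorname{Ann}(\mathcal B)$ and any $b$, the rule gives $0=\varphi(a\ast b)=\delta(\varphi(a)\ast b+a\ast\varphi(b))=\delta\,\varphi(a)\ast b$. Since $\delta\neq0$, $\varphi(a)\ast b=0$, and by anti-commutativity $b\ast\varphi(a)=0$ as well, so $\varphi(a)\in\operatorname{Ann}(\mathcal B)$.
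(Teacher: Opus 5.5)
Your proof is correct. Parts (ii) and (iii) are essentially the paper's argument. For (iii), the paper uses the two transpositions $T(y,x,z)=-T(x,y,z)$ and $T(x,z,y)=-T(x,y,z)$, while you use one transposition and a $3$-cycle; the difference is immaterial.

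For (i) you take a genuinely different route. The paper uses neither (iii) nor anti-commutativity. It goes around the pentagon of reassociations of four factors. One move gives $Q=((x\ast y)\ast z)\ast t=-(x\ast y)\ast(z\ast t)=-P$. The other four moves, through $(x\ast(y\ast z))\ast t$, $x\ast((y\ast z)\ast t)$ and $x\ast(y\ast(z\ast t))$, give $Q=P$. The odd total number of sign changes forces $2P=0$. This shows $\mathcal B^4=0$ for \emph{any} anti-associative algebra in characteristic $\neq2$.

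Your argument instead uses anti-commutativity and (iii) to obtain sign relations for $Q(x,y,z,w)$ under permutations in $S_4$. It then finds a relation incompatible with the sign character. This works, but the decisive step is only asserted (``the $A_4$ symmetry predicts $+1$''), and you should make it explicit. The double transposition is the square of your $4$-cycle. Applying $Q(x,y,z,w)=-Q(w,x,y,z)$ twice gives $Q(x,y,z,w)=Q(z,w,x,y)$. Together with your computation $Q(x,y,z,w)=-Q(z,w,x,y)$, this yields $2Q=0$.

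With that line added, and the uninformative first attempt and the ``type relations'' remarks deleted, your proof of (i) is complete. Your route makes the $S_4$-symmetry of fourfold products visible, but it is longer than the paper's pentagon argument. It also needs anti-commutativity, which the paper's argument shows is unnecessary for $\mathcal B^4=0$.
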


\begin{proof}
Set
$P=(x\ast y)\ast(z\ast t)$
and 
$Q=((x\ast y)\ast z)\ast t.$
Anti-associativity gives $Q=-P$, whereas repeated applications of the
same identity yield
\[
Q=-\bigl(x\ast(y\ast z)\bigr)\ast t
  =-x\ast\bigl(y\ast(z\ast t)\bigr)
  =P.
\]
Thus $P=-P$, so $P=Q=0$ because
$\operatorname{char}\mathbb F\neq2$. Since all bracketings of four
factors are related by anti-associativity, it follows that $\mathcal{B}^4=0$.
Consequently,
$\mathcal{B}^3\ast \mathcal{B}=\mathcal{B}\ast \mathcal{B}^3=0$, and hence
$\mathcal{B}^3\subseteq\operatorname{Ann}(\mathcal{B})$. This proves \textup{(i)}.

The inclusion $\varphi(\mathcal{B}^n)\subseteq \mathcal{B}^n$ follows by induction on $n$.
Indeed, if $u$ and $v$ are monomials of degrees $r$ and $s$,
respectively, then
\[
\varphi(u\ast v)
=\delta\bigl(\varphi(u)\ast v+u\ast\varphi(v)\bigr)
\in \mathcal{B}^r\ast \mathcal{B}^s\subseteq \mathcal{B}^{r+s}.
\]
Now let $a\in\operatorname{Ann}(\mathcal{B})$ and $x\in \mathcal{B}$. Then
\[
0=\varphi(a\ast x)
=\delta\bigl(\varphi(a)\ast x+a\ast\varphi(x)\bigr)
=\delta\,\varphi(a)\ast x.
\]
Since $\delta\neq0$, we have $\varphi(a)\ast x=0$; by
anti-commutativity, also $x\ast\varphi(a)=0$. Therefore
$\varphi(a)\in\operatorname{Ann}(\mathcal{B})$, proving \textup{(ii)}.

Anti-commutativity gives
$T(y,x,z)=-T(x,y,z)$, while
\[
T(x,z,y)
=-x\ast(z\ast y)
=x\ast(y\ast z)
=-T(x,y,z).
\]
Hence $T$ is alternating.
\end{proof}

\begin{proposition}\label{prop:diff-acaa-mN}
Let $(\mathcal{B},\ast,\varphi)$ be a $\delta$-differential
anti-commutative anti-associative algebra. Define
\[
x\circ y=x\ast\varphi(y).
\]
Then $(\mathcal{B},\circ)$ is a $\delta$-mock-Novikov algebra. Moreover,
every fourfold product with respect to $\circ$ is zero.
\end{proposition}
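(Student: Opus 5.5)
The plan is a direct verification of the two defining identities \eqref{deltaMN1} and \eqref{deltaMN2} for $x\circ y=x\ast\varphi(y)$. Every $\circ$-product will be rewritten as a $\ast$-product and reduced using three tools: anti-associativity, the $\delta$-derivation rule for $\varphi$, and Lemma~\ref{lem:acaa}\,(iii), which says that $T(x,y,z)=(x\ast y)\ast z$ is alternating. The vanishing of fourfold products will then follow from Lemma~\ref{lem:acaa}\,(i),(ii) by a degree count.

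\emph{Right-anti-commutativity.} We have
\[
(a\circ b)\circ c=(a\ast\varphi(b))\ast\varphi(c)=T(a,\varphi(b),\varphi(c)).
\]
Hence $(a\circ b)\circ c+(a\circ c)\circ b=T(a,\varphi(b),\varphi(c))+T(a,\varphi(c),\varphi(b))$. This is zero because $T$ is alternating, which gives \eqref{deltaMN2}.

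\emph{Left $\delta$-mock-pre-Lie identity.} First expand $a\circ(b\circ c)$ with the $\delta$-derivation rule:
\[
a\circ(b\circ c)=a\ast\varphi(b\ast\varphi(c))=\delta\,a\ast(\varphi(b)\ast\varphi(c))+\delta\,a\ast(b\ast\varphi^2(c)).
\]
By anti-associativity, $a\ast(\varphi(b)\ast\varphi(c))=-T(a,\varphi(b),\varphi(c))$, so the first term is $-\delta T(a,\varphi(b),\varphi(c))$. This cancels exactly against $\delta(a\circ b)\circ c=\delta T(a,\varphi(b),\varphi(c))$.

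Symmetrically, the first term of $b\circ(a\circ c)$ cancels $\delta(b\circ a)\circ c$. What remains of the left-hand side of \eqref{deltaMN1} is
\[
\delta\bigl(a\ast(b\ast\varphi^2(c))+b\ast(a\ast\varphi^2(c))\bigr)=-\delta\bigl(T(a,b,\varphi^2(c))+T(b,a,\varphi^2(c))\bigr).
\]
This vanishes again because $T$ is alternating.

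\emph{Fourfold products.} I will show by induction on $n$ that every $\circ$-monomial of degree $n$ lies in $\mathcal{B}^n$. The case $n=1$ is trivial. For the inductive step, write a monomial of degree $n=r+s$ as $u\circ v$ with $u$ of degree $r$ and $v$ of degree $s$. Then
\[
u\circ v=u\ast\varphi(v)\in\mathcal{B}^r\ast\varphi(\mathcal{B}^s)\subseteq\mathcal{B}^r\ast\mathcal{B}^s\subseteq\mathcal{B}^{r+s},
\]
where the middle inclusion is Lemma~\ref{lem:acaa}\,(ii). Taking $n=4$ and applying Lemma~\ref{lem:acaa}\,(i) gives that every fourfold $\circ$-product lies in $\mathcal{B}^4=0$, for every bracketing.

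The only delicate point is the cancellation in \eqref{deltaMN1}. It works because the factor $\delta$ produced by the $\delta$-derivation matches the coefficient $\delta$ in the $\delta$-anti-associator, and the remaining $\varphi^2$-terms are killed by the alternating property of $T$. I expect no further obstacle.
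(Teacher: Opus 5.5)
Your proposal is correct and follows the paper's proof step for step. Right-anti-commutativity comes from the alternating property of $T$. The cancellation $\delta(x\circ y)\circ z+x\circ(y\circ z)=\delta\,x\ast(y\ast\varphi^2(z))$ comes from the $\delta$-derivation rule and anti-associativity, and symmetrizing in $x,y$ gives the left identity. The degree-count induction uses Lemma~\ref{lem:acaa}(ii) together with $\mathcal{B}^4=0$.
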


\begin{proof}
By Lemma~\ref{lem:acaa}(iii),
\[
(x\circ y)\circ z+(x\circ z)\circ y
=T(x,\varphi(y),\varphi(z))
+T(x,\varphi(z),\varphi(y))=0.
\]
Moreover,
\[
\begin{aligned}
\delta(x\circ y)\circ z+x\circ(y\circ z)
&=\delta(x\ast\varphi(y))\ast\varphi(z)
+x\ast\varphi(y\ast\varphi(z))\\
&=\delta\,x\ast\bigl(y\ast\varphi^2(z)\bigr).
\end{aligned}
\]
Hence
\[
\begin{aligned}
&\delta(x\circ y)\circ z+\delta(y\circ x)\circ z
+x\circ(y\circ z)+y\circ(x\circ z)\\
&=\delta\Bigl(
x\ast(y\ast\varphi^2(z))
+y\ast(x\ast\varphi^2(z))
\Bigr)=0,
\end{aligned}
\]
so \eqref{deltaMN1} also holds.

Finally, Lemma~\ref{lem:acaa}(ii) implies inductively that every
$\circ$-monomial of degree $n$ belongs to $\mathcal{B}^n$: if $u$ and $v$ have
degrees $r$ and $s$, respectively, then
\[
u\circ v=u\ast\varphi(v)\in \mathcal{B}^r\ast \mathcal{B}^s\subseteq \mathcal{B}^{r+s}.
\]
Since $\mathcal{B}^4=0$ by Lemma~\ref{lem:acaa}(i), every fourfold
$\circ$-product vanishes.
\end{proof}

\begin{definition}
A $\delta$-mock-Novikov algebra $(\mathcal{A},\circ)$ is called
\emph{differentially special} if there exist a $\delta$-differential
anti-commutative anti-associative algebra $(\mathcal{B},\ast,\varphi)$ and an
injective algebra homomorphism
\[
\mathcal{A}\longrightarrow \mathcal{B}_{\varphi},
\qquad
x\circ_{\varphi}y=x\ast\varphi(y).
\]
\end{definition}

\begin{theorem}\label{prop:two-step-special}
Every $\delta$-mock-Novikov algebra $(\mathcal{A},\circ)$ satisfying
$\mathcal{A}^2\subseteq\Ann(\mathcal{A})$ is differentially special.
\end{theorem}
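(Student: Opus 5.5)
The plan is to build $\mathcal B$ by hand as a graded "tower" of copies of $\mathcal A$, with $\varphi$ acting as the shift. Since $\mathcal A^2\subseteq\Ann(\mathcal A)$, the algebra $(\mathcal A,\circ)$ satisfies $\mathcal A^3=0$, and the only data to realize is the bilinear map $\circ\colon\mathcal A\times\mathcal A\to\mathcal A^2$. The target $\mathcal B$ will also have $\mathcal B^3=0$, so anti-associativity holds for free. The real work is to make $\varphi$ a $\delta$-derivation while keeping the product anti-commutative.

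A two-copy construction $\mathcal A\oplus\bar{\mathcal A}$ with $\varphi(a)=\bar a$ fails. The derivation rule forces $\varphi(a\circ b)$ to equal a combination of products of shifted elements, while $\varphi(a\circ b)=\overline{a\circ b}$ is already prescribed. This forces infinitely many levels. Set
\[
\mathcal B=\bigoplus_{k\ge0}\mathcal A^{(k)},\qquad \mathcal A^{(k)}=\{x^{(k)}\mid x\in\mathcal A\}\cong\mathcal A,\qquad \varphi(x^{(k)})=x^{(k+1)}.
\]
For $i<j$ define
\[
x^{(i)}\ast y^{(j)}=c_{j-i}\,(x\circ y)^{(i+j-1)},\qquad y^{(j)}\ast x^{(i)}=-x^{(i)}\ast y^{(j)},\qquad x^{(i)}\ast y^{(i)}=0.
\]
Here $(c_d)_{d\ge0}$ are scalars to be chosen with $c_0=0$ and $c_1=1$. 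Then $\ast$ is anti-commutative by construction. Every product lies in $\bigoplus_k(\mathcal A^2)^{(k)}$. Since $u\circ y=y\circ u=0$ for $u\in\mathcal A^2$, these elements multiply to zero with everything. Hence $\mathcal B^3=0$, and $\mathcal B$ is anti-associative.

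Next I verify the $\delta$-derivation identity on basis pairs $x^{(i)},y^{(j)}$ with $i<j$; the case $i>j$ follows by anti-commutativity, and $i=j$ is checked separately. For $i<j$ the left side is $c_{j-i}(x\circ y)^{(i+j)}$. The right side is
\[
\delta\bigl(x^{(i+1)}\ast y^{(j)}+x^{(i)}\ast y^{(j+1)}\bigr)=\delta\,(c_{j-i-1}+c_{j-i+1})(x\circ y)^{(i+j)}.
\]
When $i+1=j$ the first term is $0$, which is consistent with $c_0=0$. So it suffices to have
\[
c_d=\delta(c_{d-1}+c_{d+1})\quad(d\ge1),\qquad\text{equivalently}\qquad c_{d+1}=\delta^{-1}c_d-c_{d-1}.
\]
Starting from $c_0=0$ and $c_1=1$, this recursion determines every $c_d$, which is possible because $\delta\neq0$.

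For $i=j$ the left side is $\varphi(0)=0$. The right side is
\[
\delta\bigl(x^{(i+1)}\ast y^{(i)}+x^{(i)}\ast y^{(i+1)}\bigr)=\delta\bigl(-c_1(y\circ x)+c_1(x\circ y)\bigr)^{(2i)}.
\]
This is a nonzero commutator-type term in general, and I expect it to be the main obstacle. My first attempt is to make same-level products nonzero, setting $x^{(i)}\ast y^{(i)}=e_i\,[x,y]^{(2i-1)}$ for $i\ge1$ with $[x,y]=x\circ y-y\circ x$, which is anti-commutative. Then I would re-solve the coupled linear recursions for $(c_d)$ and $(e_i)$.

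If those recursions turn out to be overdetermined, I would instead index the levels by pairs. That is, I would take $\mathcal B$ to be the free anti-commutative algebra generated by $\bigoplus_k\mathcal A^{(k)}$, modulo the relations $x^{(0)}\ast y^{(1)}=x\circ y$, their $\varphi$-consequences, and $\mathcal B^3=0$. I would then prove that $\mathcal A^{(0)}$ injects by exhibiting a grading (by level sum) on which the relations are triangular.

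Finally, the map $\iota(x)=x^{(0)}$ is injective, and
\[
\iota(x)\ast\varphi(\iota(y))=x^{(0)}\ast y^{(1)}=c_1(x\circ y)^{(0)}=\iota(x\circ y),
\]
so $\mathcal A$ embeds into $\mathcal B_\varphi$. Together with Lemma~\ref{lem:acaa} and Proposition~\ref{prop:diff-acaa-mN}, which guarantee that $\mathcal B_\varphi$ is a $\delta$-mock-Novikov algebra, this shows that $\mathcal A$ is differentially special.
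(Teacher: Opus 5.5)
\textbf{There is a genuine gap: no construction in the proposal actually produces a $\delta$-derivation.} Your main tower fails at $i=j$, as you note. Neither repair is carried through, and the first one cannot work.

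The obstruction already appears at $i=j=0$, independently of how you define same-level products. Since $x^{(0)}\ast y^{(1)}=(x\circ y)^{(0)}$ is forced by the embedding, the identity demands
\[
\varphi\bigl(x^{(0)}\ast y^{(0)}\bigr)=\delta\bigl(x^{(1)}\ast y^{(0)}+x^{(0)}\ast y^{(1)}\bigr)=\delta\,(x\circ y-y\circ x)^{(0)}.
\]
But the shift maps $\mathcal B$ into $\bigoplus_{k\ge1}\mathcal A^{(k)}$, so the left side has no level-$0$ component. This already fails for $\mathfrak N_6$ ($e_1e_2=e_3$, $e_2e_1=e_4$).

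Your $e_i$-ansatz is also inconsistent at higher levels:
\begin{itemize}
\item The case $i=j\ge1$ forces $e_i=\delta$.
\item The case $(i,i+1)$ gives $(1-\delta c_2-\delta e_{i+1})\,x\circ y+\delta e_{i+1}\,y\circ x=0$. This forces $e_{i+1}=0$ whenever $x\circ y$ and $y\circ x$ are linearly independent.
\end{itemize}

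Your fallback (a free anti-commutative algebra with free same-level products, modulo the relations and degree $\ge3$) is a universal version of the right idea and can be completed. However, injectivity of $\mathcal A^{(0)}$ is its entire content, and you leave it unproved. The mechanism is also not triangularity. In the relevant homogeneous component, the relations are spanned by $x^{(0)}\ast y^{(1)}-(x\circ y)^{(0)}$ and by $u^{(0)}\ast b^{(1)}$, $u^{(1)}\ast b^{(0)}$ with $u\in\mathcal A^2$. One must apply $p^{(0)}\ast q^{(1)}\mapsto p\circ q$ and use $\mathcal A^2\subseteq\Ann(\mathcal A)$ to see that no nonzero element of $\mathcal A^{(0)}$ is a relation.

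\textbf{The missing idea is that $\varphi$ need not be injective, nor act as the shift on $\mathcal A^2$.} Your claim that the problem ``forces infinitely many levels'' is an artifact of setting $\varphi(a)=\bar a$ on all of $\mathcal A$. The paper splits $\mathcal A=\mathcal V\oplus\mathcal Z$ with $\mathcal Z=\mathcal A^2$ and takes the finite algebra $\mathcal B=\mathcal V\oplus\mathcal V'\oplus\bigwedge^2\mathcal V\oplus\mathcal Z$ with
\[
v\ast w=v\wedge w,\quad v\ast w'=\beta(v,w),\quad \varphi(v)=v',\quad \varphi(\mathcal V')=\varphi(\mathcal Z)=0,\quad \varphi(v\wedge w)=\delta\bigl(\beta(v,w)-\beta(w,v)\bigr).
\]
The adjoined summand $\bigwedge^2\mathcal V$ absorbs exactly the commutator term that blocked you. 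Setting $\varphi(\mathcal Z)=0$ costs nothing, because $\mathcal Z$ annihilates $\mathcal A$, so the embedding condition imposes no constraint there. The only nontrivial $\delta$-derivation check is then the case $v\ast w$; all other cases vanish on both sides. Finally, $\mathcal B^3=0$ gives anti-associativity, as in your plan.
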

\begin{proof}
Put $\mathcal{Z}=\mathcal{A}^2$ and choose a vector-space complement $\mathcal{V}$ of $\mathcal{Z}$ in $\mathcal{A}$.
Since $\mathcal{Z}\subseteq\Ann(\mathcal{A})$, the multiplication of $\mathcal{A}$ is determined by
a bilinear map $\beta:\mathcal{V}\times \mathcal{V}\to \mathcal{Z}$ such that
$v\circ w=\beta(v,w)$ for $v,w\in \mathcal{V}$.

Let $\mathcal{V}'$ be a copy of $\mathcal{V}$, with $v'$ denoting the copy of $v$, and set
$\mathcal{B}=\mathcal{V}\oplus \mathcal{V}'\oplus\bigwedge^2\mathcal{V}\oplus \mathcal{Z}$. Endow $\mathcal{B}$ with the
anti-commutative multiplication determined by
$v\ast w=v\wedge w$ and $v\ast w'=\beta(v,w)$ for $v,w\in \mathcal{V}$,
all other products being zero unless forced by anti-commutativity.
Then $\mathcal{B}^2\subseteq\bigwedge^2\mathcal{V}\oplus \mathcal{Z}\subseteq\Ann(\mathcal{B})$, so
$(\mathcal{B},\ast)$ is anti-associative.

Define $\varphi:\mathcal{B}\to \mathcal{B}$ linearly by
\[
\varphi(v)=v',\qquad
\varphi(v')=\varphi(z)=0,\qquad
\varphi(v\wedge w)
=\delta\bigl(\beta(v,w)-\beta(w,v)\bigr),
\]
where $v,w\in \mathcal{V}$ and $z\in \mathcal{Z}$. The last assignment is well defined
because $\beta(v,w)-\beta(w,v)$ is alternating in $v$ and $w$.
Moreover,
$\varphi(v\ast w)
=\delta\bigl(\varphi(v)\ast w+v\ast\varphi(w)\bigr)$.
For products in $\mathcal{V}\ast \mathcal{V}'$ both sides of the $\delta$-derivation
identity are zero, and the remaining cases follow from
$\varphi(\bigwedge^2\mathcal{V})\subseteq \mathcal{Z}$, $\varphi(\mathcal{Z})=0$, and
$\bigwedge^2\mathcal{V}\oplus \mathcal{Z}\subseteq\Ann(\mathcal{B})$. Hence
$(\mathcal{B},\ast,\varphi)$ is a $\delta$-differential anti-commutative
anti-associative algebra.

Finally, identify $\mathcal{A}=\mathcal{V}\oplus \mathcal{Z}$ with its natural subspace of $\mathcal{B}$.
For $a=v+z$ and $b=w+t$, the product in $\mathcal{B}_\varphi$ satisfies
$\iota(a)\circ_\varphi\iota(b)
=v\ast w'=\beta(v,w)=\iota(a\circ b)$.
Thus the natural inclusion $\iota:\mathcal{A}\to \mathcal{B}_\varphi$ is an injective
algebra homomorphism, and therefore $\mathcal{A}$ is differentially special.
\end{proof}

The classical Novikov embedding theorem raises the question of whether
every $\delta$-mock-Novikov algebra is differentially special. By
Proposition~\ref{prop:diff-acaa-mN}, differential speciality implies
$\mathcal{A}^4=0$. The following example
shows that this necessary condition need not hold.

\begin{example}\label{ex:A4-nonzero}
Let $\delta\in\mathbb C^\times$ and let $\mathcal{V}$ be a four-dimensional
vector space with basis $\{e_1,e_2,e_3,e_4\}$. Set
\[
\mathcal{A}=\bigoplus_{p=1}^{4}\Lambda^p\mathcal{V}.
\]
For homogeneous $x\in\Lambda^p\mathcal{V}$ and $y\in\Lambda^q\mathcal{V}$, define
\begin{equation*}
x\circ y=
\begin{cases}
x\wedge y, & q=1,\\
-\delta\,x\wedge y, & p=1,\ q=3,\\
0, & \text{otherwise},
\end{cases}
\end{equation*}
and extend the multiplication bilinearly. Then $(\mathcal{A},\circ)$ is a
$\delta$-mock-Novikov algebra and $\mathcal{A}^4\neq0$.
\end{example}

\begin{proposition}
For every $\delta\in\mathbb C^\times$, there exists a
finite-dimensional $\delta$-mock-Novikov algebra that is not
differentially special.
\end{proposition}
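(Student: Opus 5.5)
The plan is to use Example~\ref{ex:A4-nonzero} together with the necessary condition coming from Proposition~\ref{prop:diff-acaa-mN}. Fix $\delta\in\mathbb C^\times$ and let $(\mathcal{A},\circ)$ be the $15$-dimensional algebra $\bigoplus_{p=1}^4\Lambda^p\mathcal{V}$ of Example~\ref{ex:A4-nonzero}. The argument splits into three steps.

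\emph{Step 1: the algebra is $\delta$-mock-Novikov.} I will take this from the example, which is stated earlier. If I need to check it, the work is a short case analysis on the degrees $(p,q,r)$ of homogeneous $a,b,c$.
\begin{itemize}
\item For \eqref{deltaMN2}: a nonzero $(a\circ b)\circ c$ requires $c\in\Lambda^1\mathcal{V}$, and also $b\in\Lambda^1\mathcal{V}$, or $p=1,q=3$ (and then degree $5$ vanishes). When $b,c\in\Lambda^1\mathcal{V}$, we get $a\wedge b\wedge c+a\wedge c\wedge b=0$.
\item For \eqref{deltaMN1}: with $c\in\Lambda^1\mathcal{V}$, the terms with $a,b$ both of degree one cancel against the $-\delta$ coefficient. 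This happens because $a\circ(b\circ c)$ with $b\circ c\in\Lambda^2\mathcal{V}$ vanishes unless $p=1$ and the inner product has degree $3$. The remaining cases are forced by degree counting.
\end{itemize}

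\emph{Step 2: $\mathcal{A}^4\neq0$.} I will exhibit an explicit nonzero fourfold product:
\[
((e_1\circ e_2)\circ e_3)\circ e_4=e_1\wedge e_2\wedge e_3\wedge e_4\neq0.
\]
This holds because each step multiplies by a degree-one element on the right, so the first case $x\circ y=x\wedge y$ applies every time.

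\emph{Step 3: contradiction with differential speciality.} Suppose there were an injective homomorphism $\iota:\mathcal{A}\to\mathcal{B}_\varphi$ for some $\delta$-differential anti-commutative anti-associative algebra $(\mathcal{B},\ast,\varphi)$.
\begin{itemize}
\item By Proposition~\ref{prop:diff-acaa-mN}, every fourfold $\circ_\varphi$-product in $\mathcal{B}_\varphi$ vanishes. This relies on Lemma~\ref{lem:acaa}(i), which needs characteristic $\neq2$ and so holds over $\mathbb C$.
\item Since $\iota$ is a homomorphism, it gives $\iota\bigl(((e_1\circ e_2)\circ e_3)\circ e_4\bigr)=((\iota e_1\circ_\varphi\iota e_2)\circ_\varphi\iota e_3)\circ_\varphi\iota e_4=0$.
\item Injectivity of $\iota$ then forces $e_1\wedge e_2\wedge e_3\wedge e_4=0$, contradicting Step 2.
\end{itemize}
Hence $\mathcal{A}$ is a finite-dimensional $\delta$-mock-Novikov algebra that is not differentially special.

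The main obstacle is Step 1, and only if the verification in Example~\ref{ex:A4-nonzero} is not taken as given. In that case the sign bookkeeping for \eqref{deltaMN1} in the case $p=q=1$, $r=1$, where the $\delta$ coefficients must cancel, is the delicate point. Steps 2 and 3 are immediate.
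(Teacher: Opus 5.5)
Your proof is correct and follows the paper's argument exactly: the algebra of Example~\ref{ex:A4-nonzero} has $\mathcal{A}^4\neq0$, witnessed concretely by your product $((e_1\circ e_2)\circ e_3)\circ e_4=e_1\wedge e_2\wedge e_3\wedge e_4$, while Proposition~\ref{prop:diff-acaa-mN} kills all fourfold products in any $\mathcal{B}_\varphi$, so no injective homomorphism can exist. One minor correction to your optional check of Step~1: for $p=q=r=1$ the right-nested terms of \eqref{deltaMN1} vanish outright and the two $\delta$-terms cancel by antisymmetry of $\wedge$, whereas the $-\delta$ coefficient is what is needed when $\{p,q\}=\{1,2\}$ and $r=1$ (e.g.\ for $(p,q,r)=(1,2,1)$, the term $\delta(b\circ a)\circ c=\delta\,a\wedge b\wedge c$ cancels $a\circ(b\circ c)=-\delta\,a\wedge b\wedge c$).
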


\begin{proof}
Fix $\delta\in\mathbb C^\times$ and let $\mathcal{A}$ be the algebra constructed
in Example~\ref{ex:A4-nonzero}, for which $\mathcal{A}^4\neq0$. If $\mathcal{A}$ were
differentially special, it would embed into an algebra $\mathcal{B}_\varphi$
arising from a $\delta$-differential anti-commutative
anti-associative algebra. By
Proposition~\ref{prop:diff-acaa-mN}, every fourfold product in
$\mathcal{B}_\varphi$ vanishes, which would imply $\mathcal{A}^4=0$, a contradiction.
\end{proof}

\begin{remark}
Thus the universal differential embedding property of Novikov
algebras does not extend to the mock setting.
\end{remark}
\section{Quadratic duality and non-Koszulity of the \texorpdfstring{$\delta$}{delta}-mock-Novikov operad}
\label{sec3}

In this section, we study the binary quadratic operad governing
$\delta$-mock-Novikov algebras. We first determine its quadratic
relation space and prove that the resulting operad is quadratically
self-dual. We then compute the low-arity dimensions needed to apply
the Ginzburg-Kapranov functional equation and show that
$\mathcal M_\delta$ is not Koszul for any
$\delta\in\mathbb C^\times$.

Let $E$ be the generating $S$-module concentrated in arity two, with
\[
E(2)=\mathbb C[S_2],
\qquad
E(n)=0\quad (n\neq2).
\]
Here $\mathbb C[S_2]$ denotes the group algebra of $S_2$, regarded
as the regular $S_2$-module. If $\mu$ denotes one of its basis
generators, then the two basis elements of $E(2)$ may be represented
by
\[
\mu(x,y)=x\circ y,
\qquad
\mu(y,x)=y\circ x.
\]
Thus the use of the regular $S_2$-module reflects the fact that no
symmetry is imposed on the multiplication $\circ$.

Let $\mathcal F(E)$ be the free symmetric operad generated by $E$.
For each $n\geq1$, its arity-$n$ component
$\mathcal F(E)(n)$ is the vector space spanned by all formal
$n$-ary operations obtained by composing the binary generator
$\circ$ and permuting the inputs. In particular,
\[
\mathcal F(E)(1)=\mathbb C\,\mathrm{id},
\qquad
\mathcal F(E)(2)=E(2).
\]
In arity three, there are two possible bracketings. For
$\sigma\in S_3$, put
\[
L_\sigma
=
\bigl(x_{\sigma(1)}\circ x_{\sigma(2)}\bigr)
\circ x_{\sigma(3)},
\qquad
T_\sigma
=
x_{\sigma(1)}\circ
\bigl(x_{\sigma(2)}\circ x_{\sigma(3)}\bigr).
\]
The elements $L_\sigma$ and $T_\sigma$, $\sigma\in S_3$, form a
basis of $\mathcal F(E)(3)$. Hence
\[
\mathcal F(E)(3)
=
\operatorname{span}
\{L_\sigma,T_\sigma\mid\sigma\in S_3\},
\qquad
\dim\mathcal F(E)(3)=12.
\]

The defining identities of a $\delta$-mock-Novikov algebra give the
quadratic relations
\begin{align}
r_{1,\delta}(x,y,z)
={}&
\delta(x\circ y)\circ z+\delta(y\circ x)\circ z
+x\circ(y\circ z)+y\circ(x\circ z),
\label{eq:operad-delta-mN1}\\
r_2(x,y,z)
={}&
(x\circ y)\circ z+(x\circ z)\circ y.
\label{eq:operad-delta-mN2}
\end{align}
Let
$\mathcal R_\delta\subseteq\mathcal F(E)(3)$
be the $S_3$-submodule generated by
$r_{1,\delta}$ and $r_2$. The operad governing
$\delta$-mock-Novikov algebras is defined by
\[
\mathcal M_\delta
=
\mathcal F(E)/(\mathcal R_\delta),
\]
where $(\mathcal R_\delta)$ denotes the operadic ideal generated by
$\mathcal R_\delta$.
For each $n$, we denote by $\mathcal M_\delta(n)$ the arity-$n$
component of $\mathcal M_\delta$. Since the defining relations have
arity three,
\[
\mathcal M_\delta(1)=\mathbb C\,\mathrm{id},
\qquad
\mathcal M_\delta(2)=E(2),\qquad
\mathcal M_\delta(3)
=
\mathcal F(E)(3)/\mathcal R_\delta.
\]

\begin{proposition}
\label{prop:delta-mN-arity-three}
For every $\delta\in\mathbb C^\times$,
\[
\dim\mathcal R_\delta=6,
\qquad
\dim\mathcal M_\delta(3)=6.
\]
\end{proposition}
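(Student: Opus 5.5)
The plan is to compute $\mathcal R_\delta$ directly inside $\mathcal F(E)(3)$, using the basis $\{L_\sigma,T_\sigma\mid\sigma\in S_3\}$. Since $\mathcal R_\delta$ is the $S_3$-submodule generated by $r_{1,\delta}$ and $r_2$, it is spanned by the elements $r_{1,\delta}(x_{\sigma(1)},x_{\sigma(2)},x_{\sigma(3)})$ and $r_2(x_{\sigma(1)},x_{\sigma(2)},x_{\sigma(3)})$ for $\sigma\in S_3$. I will first find how many distinct elements each orbit really contains, then check that these elements are linearly independent.

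For $r_2$, write $L_{abc}=(a\circ b)\circ c$ for $\{a,b,c\}=\{x_1,x_2,x_3\}$. Then $r_2(a,b,c)=L_{abc}+L_{acb}$, which is symmetric in $b,c$. So the $S_3$-orbit of $r_2$ consists, up to equality, of the three elements
\[
L_{abc}+L_{acb},\qquad a\in\{x_1,x_2,x_3\},
\]
indexed by the leftmost entry $a$. These involve pairwise disjoint pairs of basis vectors $L_\sigma$. Hence they are linearly independent and span a $3$-dimensional space $R_2$ contained in $\operatorname{span}\{L_\sigma\}$.

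For $r_{1,\delta}$, write $T_{abc}=a\circ(b\circ c)$. Then
\[
r_{1,\delta}(a,b,c)=\delta\,(L_{abc}+L_{bac})+T_{abc}+T_{bac},
\]
which is symmetric in $a,b$. Its orbit therefore gives three elements, indexed by the last entry $c$. Their $T$-parts $T_{abc}+T_{bac}$ again involve pairwise disjoint pairs of basis vectors $T_\sigma$. The elements of $R_2$ have no $T$-components at all. So if a linear combination of the three $r_1$-type elements and elements of $R_2$ vanishes, projecting onto $\operatorname{span}\{T_\sigma\}$ forces the $r_1$-coefficients to be zero, and the independence of $R_2$ then kills the remaining coefficients.

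It follows that $\dim\mathcal R_\delta=3+3=6$. The formula $\mathcal M_\delta(3)=\mathcal F(E)(3)/\mathcal R_\delta$ recorded above then gives $\dim\mathcal M_\delta(3)=12-6=6$.

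There is no real obstacle here. The only point needing care is the bookkeeping that the $S_3$-orbits really collapse to three elements each because of the symmetries of $r_{1,\delta}$ and $r_2$. Independence is automatic because the supports are disjoint, and this holds for every $\delta\in\mathbb C^\times$; in fact $\delta\neq0$ is not even used.
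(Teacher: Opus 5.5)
Your proposal is correct and follows essentially the same route as the paper. In both, the symmetries of $r_{1,\delta}$ and $r_2$ reduce each $S_3$-orbit to three spanning elements; the right-nested components of the $r_{1,\delta}$-translates, which $r_2$ lacks, force their coefficients to vanish; and the disjoint left-nested supports of the $r_2$-translates then give $\dim\mathcal R_\delta=6$ and $\dim\mathcal M_\delta(3)=12-6=6$. Your observation that $\delta\neq0$ is never used is also accurate.
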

\begin{proof}
The relation $r_{1,\delta}(x,y,z)$ is symmetric in $x$ and $y$,
whereas $r_2(x,y,z)$ is symmetric in $y$ and $z$. Hence
$\mathcal R_\delta$ is spanned by
\[
r_{1,\delta}(x,y,z),\quad
r_{1,\delta}(x,z,y),\quad
r_{1,\delta}(y,z,x),\quad
r_2(x,y,z),\quad
r_2(y,x,z),\quad
r_2(z,x,y).
\]

Consider a linear dependence among these six elements, with
coefficients
$a_1,a_2,a_3,\\b_1,b_2,b_3$
in the order displayed above. Comparing the coefficients of the
right-nested basis elements
\[
x\circ(y\circ z),\qquad
x\circ(z\circ y),\qquad
y\circ(z\circ x)
\]
gives
$a_1=a_2=a_3=0.$
The linear dependence therefore reduces to one among the last three
relations. Comparing the coefficients of
\[
(x\circ y)\circ z,\qquad
(y\circ x)\circ z,\qquad
(z\circ x)\circ y
\]
then gives
$b_1=b_2=b_3=0.$
Thus the six spanning elements are linearly independent, and hence
$\dim\mathcal R_\delta=6.$
Since $\dim\mathcal F(E)(3)=12$, we obtain
$\dim\mathcal M_\delta(3)
=
6.$
\end{proof}

We now determine the quadratic dual of $\mathcal M_\delta$.
Let
\[
E^\vee=E^*\otimes\operatorname{sgn}_2,
\]
where $\operatorname{sgn}_2$ denotes the sign representation of
$S_2$. Following the standard quadratic duality for operads
\cite{GK94}, define
\[
\mathcal R_\delta^\perp
=
\left\{
f\in\mathcal F(E^\vee)(3)
\;\middle|\;
\langle f,r\rangle=0
\text{ for all }r\in\mathcal R_\delta
\right\},
\]
where $\langle-,-\rangle$ is the Ginzburg-Kapranov pairing between
$\mathcal F(E^\vee)(3)$ and $\mathcal F(E)(3)$.
The quadratic dual of $\mathcal M_\delta$ is then
\[
\mathcal M_\delta^!
=
\mathcal F(E^\vee)/(\mathcal R_\delta^\perp),
\]
where $(\mathcal R_\delta^\perp)$ denotes the operadic ideal generated
by $\mathcal R_\delta^\perp$.
Since $E(2)=\mathbb C[S_2]$ is the regular $S_2$-module,
$E^\vee(2)$ is again isomorphic to the regular $S_2$-module. We
write its binary generator as $\star$.

Let $L_\sigma^\vee$ and $T_\sigma^\vee$ denote the basis elements
of $\mathcal F(E^\vee)(3)$ corresponding to $L_\sigma$ and
$T_\sigma$. We fix the standard pairing
\begin{equation}
\label{eq:GK-pairing-delta-mN}
\begin{aligned}
\langle L_\sigma^\vee,L_\tau\rangle
&=
\operatorname{sgn}(\sigma)\delta_{\sigma,\tau},\\
\langle T_\sigma^\vee,T_\tau\rangle
&=
-\operatorname{sgn}(\sigma)\delta_{\sigma,\tau},
\end{aligned}
\end{equation}
with
\[
\langle L_\sigma^\vee,T_\tau\rangle
=
\langle T_\sigma^\vee,L_\tau\rangle
=
0.
\]
Here $\delta_{\sigma,\tau}$ denotes the Kronecker delta.

\begin{theorem}
\label{thm:quadratic-dual-delta-mN}
The quadratic dual $\mathcal M_\delta^!$ is generated by a binary
operation $\star$ subject to
\begin{align}
x\star(y\star z)+y\star(x\star z)&=0,
\label{eq:dual-delta-mN1}\\
(x\star y)\star z+(x\star z)\star y
+\delta\,x\star(y\star z)
+\delta\,x\star(z\star y)&=0.
\label{eq:dual-delta-mN2}
\end{align}
Moreover,
\[
\mathcal M_\delta^!\cong\mathcal M_\delta.
\]
Thus the operad of $\delta$-mock-Novikov algebras is quadratically
self-dual.
\end{theorem}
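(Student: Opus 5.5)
The plan is to compute $\mathcal R_\delta^\perp$ directly and then recognize it as the relation module of $\mathcal M_\delta$ written in the opposite multiplication. Since $\dim\mathcal F(E^\vee)(3)=12$ and $\dim\mathcal R_\delta=6$ by Proposition~\ref{prop:delta-mN-arity-three}, the pairing \eqref{eq:GK-pairing-delta-mN} is nondegenerate, so $\dim\mathcal R_\delta^\perp=6$. It therefore suffices to show two things: the $S_3$-submodule $\mathcal R'$ generated by the left-hand sides $s_1$ of \eqref{eq:dual-delta-mN1} and $s_{2,\delta}$ of \eqref{eq:dual-delta-mN2} has dimension $6$, and it is orthogonal to $\mathcal R_\delta$. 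Then $\mathcal R'=\mathcal R_\delta^\perp$.

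\textbf{Dimension of $\mathcal R'$.} I will mimic the proof of Proposition~\ref{prop:delta-mN-arity-three}. The element $s_1$ is symmetric in $x,y$, and $s_{2,\delta}$ is symmetric in $y,z$, so $\mathcal R'$ is spanned by three permutations of each. To see that these six are independent:
\begin{itemize}
\item compare coefficients of left-nested monomials $(x\star y)\star z,\dots$, which occur only in the $s_{2,\delta}$'s; this kills their coefficients;
\item then compare right-nested monomials to kill the $s_1$ coefficients.
\end{itemize}

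\textbf{Orthogonality.} I write each element of $\mathcal F(E)(3)$ and $\mathcal F(E^\vee)(3)$ in the basis $L_\sigma,T_\sigma$ (respectively $L_\sigma^\vee,T_\sigma^\vee$). I then check that $\langle f,r\rangle=0$ for all $f$ in the six-element spanning set of $\mathcal R'$ and all $r$ in the six-element spanning set of $\mathcal R_\delta$. This is a $6\times 6$ table of pairings. Each entry is a signed sum over the permutations $\sigma$ occurring in both $f$ and $r$, with sign $\operatorname{sgn}(\sigma)$ on $L$-terms and $-\operatorname{sgn}(\sigma)$ on $T$-terms.

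The typical cancellation is the following. In $r_{1,\delta}$, the terms $\delta L_{\mathrm{id}}$ and $\delta L_{(12)}$ carry opposite signs. In $s_{2,\delta}$, the terms $\delta T_{\mathrm{id}}$ and $\delta T_{(23)}$ also carry opposite signs. The $\delta$-terms of one relation therefore pair against plain terms of the other, and the two contributions cancel with matching $\delta$-weights.

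This sign bookkeeping is the main obstacle. A single sign convention error would make the relations look non-orthogonal. I will first check the pairing of the generators $s_1$ with $r_2$, and $s_{2,\delta}$ with $r_{1,\delta}$, since these are where the $\delta$'s must balance. Equivariance of the pairing (up to the sign twist) then reduces the remaining entries to a few orbit representatives. If the pairing is invariant in the sense $\langle \sigma f,\sigma r\rangle=\langle f,r\rangle$, it suffices to pair $s_1$ and $s_{2,\delta}$ against the full spanning set of $\mathcal R_\delta$.

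\textbf{Self-duality.} Given $\mathcal R_\delta^\perp=\mathcal R'$, define $a\bullet b=b\star a$. Then
\[
x\star(y\star z)+y\star(x\star z)=(z\bullet y)\bullet x+(z\bullet x)\bullet y,
\]
which is $r_2(z,y,x)$. Using $(x\star y)\star z=z\bullet(y\bullet x)$ and $x\star(y\star z)=(z\bullet y)\bullet x$, relation \eqref{eq:dual-delta-mN2} becomes
\[
z\bullet(y\bullet z')\big|_{z'=x}+y\bullet(z\bullet x)+\delta(z\bullet y)\bullet x+\delta(y\bullet z)\bullet x=0,
\]
that is, $z\bullet(y\bullet x)+y\bullet(z\bullet x)+\delta(z\bullet y)\bullet x+\delta(y\bullet z)\bullet x=0$, which is $r_{1,\delta}(z,y,x)$. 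Hence the map $\star\mapsto$ opposite of $\circ$ sends $\mathcal R_\delta^\perp$ onto $\mathcal R_\delta$, with the same parameter $\delta$. It is induced by an automorphism of the regular $S_2$-module of generators, so it extends to an isomorphism of free operads. This isomorphism carries the ideal $(\mathcal R_\delta^\perp)$ onto $(\mathcal R_\delta)$, which gives $\mathcal M_\delta^!\cong\mathcal M_\delta$.
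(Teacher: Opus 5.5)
Your proposal is correct and follows essentially the same route as the paper. You take the six translates of $s_1,s_{2,\delta}$, prove independence via disjoint left-nested and then right-nested supports, check orthogonality directly against $\mathcal R_\delta$, count $\dim\mathcal R_\delta^\perp=12-6$, and identify the dual with $\mathcal M_\delta$ via $x\star y\mapsto y\circ x$, which carries $s_1,s_{2,\delta}$ to $r_2(z,y,x)$ and $r_{1,\delta}(z,y,x)$. The pairing table does vanish entry by entry with the stated signs. Your reduction to orbit representatives is valid even though the pairing is only twisted-invariant, $\langle\sigma f,\sigma r\rangle=\operatorname{sgn}(\sigma)\langle f,r\rangle$, since the twist does not affect vanishing.
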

\begin{proof}
Let $\mathcal S_\delta\subseteq\mathcal F(E^\vee)(3)$ be the
$S_3$-submodule generated by
\begin{align*}
s_1(x,y,z)
&=
x\star(y\star z)+y\star(x\star z),\\
s_{2,\delta}(x,y,z)
&=
(x\star y)\star z+(x\star z)\star y
+\delta\,x\star(y\star z)
+\delta\,x\star(z\star y).
\end{align*}
The first relation is symmetric in $x,y$, whereas the second is
symmetric in $y,z$. Their $S_3$-orbits are therefore spanned by
three elements each.

The three translates of $s_{2,\delta}$ have pairwise disjoint
supports among the left-nested monomials
$L_\sigma^\vee$. Hence their coefficients in any linear dependence
must vanish. The three translates of $s_1$ then have pairwise
disjoint supports among the right-nested monomials
$T_\sigma^\vee$. Consequently,
\[
\dim\mathcal S_\delta=6.
\]

Using~\eqref{eq:GK-pairing-delta-mN}, a direct check against the six
spanning relations in
Proposition~\ref{prop:delta-mN-arity-three} shows that
$\mathcal S_\delta$ is orthogonal to $\mathcal R_\delta$. Hence
$\mathcal S_\delta\subseteq\mathcal R_\delta^\perp.$

By Proposition~\ref{prop:delta-mN-arity-three} and the
nondegeneracy of the pairing~\eqref{eq:GK-pairing-delta-mN},
\[
\begin{aligned}
\dim\mathcal R_\delta^\perp
&=
\dim\mathcal F(E^\vee)(3)-\dim\mathcal R_\delta
=
6.
\end{aligned}
\]
Therefore
\[
\mathcal S_\delta=\mathcal R_\delta^\perp,
\]
which proves the presentation
\eqref{eq:dual-delta-mN1}--\eqref{eq:dual-delta-mN2}.

It remains to identify the resulting operad. Since
$E^\vee(2)\cong E(2)$ as $S_2$-modules, choose the identification
of generators given by
\[
x\star y\longmapsto y\circ x.
\]
Under this substitution, the left-hand side of
\eqref{eq:dual-delta-mN1} becomes
$(z\circ y)\circ x+(z\circ x)\circ y,$
which is
$r_2(z,y,x).$
Likewise, the left-hand side of
\eqref{eq:dual-delta-mN2} becomes
\[
\begin{aligned}
&z\circ(y\circ x)+y\circ(z\circ x)
+\delta(z\circ y)\circ x
+\delta(y\circ z)\circ x,
\end{aligned}
\]
which is precisely
$r_{1,\delta}(z,y,x).$
Thus the defining relations of $\mathcal M_\delta^!$ are carried
onto those of $\mathcal M_\delta$. Since taking the opposite
multiplication is involutive, this gives an operad isomorphism
\[
\mathcal M_\delta^!\cong\mathcal M_\delta.
\]
\end{proof}
We next prove that the self-dual operad $\mathcal M_\delta$ is not
Koszul. We first record the low-arity dimensions needed below. The
calculation is carried out using labelled planar binary monomials and
exact row reduction.
\begin{lemma}
\label{lem:low-arity-delta-mN}
The low-arity dimensions of the operad $\mathcal M_\delta$ are as follows.
If $\delta\neq-1$, then
\[
\begin{array}{c|cccc}
n&1&2&3&4\\ \hline
\dim\mathcal M_\delta(n)&1&2&6&14.
\end{array}
\]
For $\delta=-1$, one has
\[
\begin{array}{c|ccccccc}
n&1&2&3&4&5&6&7\\ \hline
\dim\mathcal M_{-1}(n)&1&2&6&20&60&126&252.
\end{array}
\]
\end{lemma}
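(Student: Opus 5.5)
The first three entries are immediate from what is already established: $\mathcal M_\delta(1)=\mathbb C\,\mathrm{id}$ and $\mathcal M_\delta(2)=E(2)$ hold by construction, and $\dim\mathcal M_\delta(3)=6$ is Proposition~\ref{prop:delta-mN-arity-three}. All remaining entries I would obtain as $\dim\mathcal F(E)(n)-\dim(\mathcal R_\delta)(n)$. Here $\dim\mathcal F(E)(n)=n!\,C_{n-1}$, so this is $120$ for $n=4$. To compute $(\mathcal R_\delta)(n)$, I would use the standard spanning set of the operadic ideal.

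In arity four this spanning set consists of the consequences
\[
r(x_1\circ x_2,x_3,x_4),\ r(x_1,x_2\circ x_3,x_4),\ r(x_1,x_2,x_3\circ x_4),\ r(x_1,x_2,x_3)\circ x_4,\ x_4\circ r(x_1,x_2,x_3),
\]
for $r\in\{r_{1,\delta},r_2\}$, with all relabelings by $S_4$. Each is written in the $120$-element monomial basis. I would then run exact row reduction over $\mathbb Q(\delta)$ and determine the rank $106$ generically, which gives $14$. Afterwards I would locate the specializations where the rank drops by taking the gcd of the relevant maximal minors, or equivalently by tracking pivots that are polynomials in $\delta$.

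Self-duality (Theorem~\ref{thm:quadratic-dual-delta-mN}) helps here: it identifies $\mathcal M_\delta(4)$ with $\mathcal M_\delta^!(4)$, which serves as a consistency check. The symmetries noted in the proof of Proposition~\ref{prop:delta-mN-arity-three}, namely $r_{1,\delta}$ symmetric in its first two arguments and $r_2$ in its last two, cut the generating set down.

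To see why $\delta=-1$ is special, I would set $\delta=-1$ in $r_{1,\delta}$. It then becomes a left anti-associator-type symmetrization, and $r_2$ becomes right anti-commutativity. I expect that the degree-four consequences which generically force the extra rank come from combining these two identities, with a coefficient $(1+\delta)$ appearing in the derived relations, for example when expanding $((x\circ y)\circ z)\circ t$ in two ways. At $\delta=-1$ this coefficient vanishes, leaving rank $100$ and hence dimension $20$.

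For arities $5,6,7$ at $\delta=-1$ the free operad has dimensions $1680$, $30240$, $665280$. Direct row reduction in arity $7$ is heavy, so I would work $S_n$-equivariantly or, better, find a normal form: a Gröbner basis for the shuffle operad associated to $\mathcal M_{-1}$. The numbers $6,20,60,126,252$ are $\binom{n}{\lfloor n/2\rfloor}$-like binomials ($\binom{4}{2}$, $\binom{6}{3}$, $\binom{6}{3}\cdot3$, $\binom94$, $\binom{10}{5}$), which suggests that a clean monomial basis exists. I would first compute the shuffle Gröbner basis with respect to a path-lexicographic order, check that it stabilizes in arity four, and then count normal tree monomials in arities up to $7$. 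If that fails, I would fall back to the computer verification by exact linear algebra that the paper indicates, reducing each arity modulo the consequences of the previous one.

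The main obstacle is precisely this: certifying the $\delta=-1$ counts in arities $5$ to $7$ rigorously, since the matrix sizes make plain row reduction the costly step and the Gröbner basis may not be finite in low degree.
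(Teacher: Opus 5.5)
Your plan matches the paper's proof. Both compute the arity-four dimension by exact row reduction of the $120$-column relation matrix over $\mathbb Q(\delta)$, obtaining rank $106$; a nonvanishing maximal minor then controls specializations (the paper exhibits $2\delta^7(\delta+1)^9$, which settles every $\delta\neq0,-1$ at once), and a direct computation gives rank $100$ at $\delta=-1$, while arities $5$--$7$ for $\delta=-1$ are likewise done by machine row reduction. Your shuffle Gr\"obner-basis alternative only reorganizes that last computation, and the self-duality ``consistency check'' is vacuous, since the two operads are isomorphic; neither point affects the argument.
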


\begin{proof}
These dimensions are obtained by exact linear algebra on the
multilinear components of the free binary operad.
In arity $n$, we use the basis of labelled planar binary monomials
and impose all operadic consequences of the defining relations
$r_{1,\delta}$ and $r_2$.

For $n=4$, the free component has dimension
$5\cdot 4!=120.$
Exact row reduction over $\mathbb Q(\delta)$ gives rank $106$ for
the relation matrix. Moreover, a nonzero maximal minor is
\[
2\delta^7(\delta+1)^9.
\]
Hence the rank remains $106$ for
$\delta\in\mathbb C^\times\setminus\{-1\}$, and therefore
\[
\dim\mathcal M_\delta(4)=120-106=14.
\]
At $\delta=-1$, the rank drops to $100$, giving
$\dim\mathcal M_{-1}(4)=20.$
The remaining dimensions for $\delta=-1$ are obtained in the same
way by exact row reduction.
\end{proof}
\begin{theorem}
\label{thm:delta-mN-non-Koszul}
For every $\delta\in\mathbb C^\times$, the operad
$\mathcal M_\delta$ is not Koszul.
\end{theorem}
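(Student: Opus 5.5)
The plan is to apply the Ginzburg--Kapranov functional equation. If a binary quadratic operad $\mathcal P$ is Koszul, its exponential generating series
\[
f_{\mathcal P}(x)=\sum_{n\geq1}\dim\mathcal P(n)\,\frac{x^n}{n!}
\]
satisfies $f_{\mathcal P}\bigl(-f_{\mathcal P^!}(-x)\bigr)=x$. By Theorem~\ref{thm:quadratic-dual-delta-mN} we have $\mathcal M_\delta^!\cong\mathcal M_\delta$, so both series equal $f=f_{\mathcal M_\delta}$. Koszulity would therefore force $f(h(x))=x$, where $h(x)=-f(-x)$. This equation determines $\dim\mathcal M_\delta(n)$ recursively from lower arities, because $d_n$ enters the coefficient of $x^n$ as $2d_n/n!$. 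The strategy is to show that the predicted values are either impossible (for example negative) or differ from the actual dimensions in Lemma~\ref{lem:low-arity-delta-mN}.

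\emph{Case $\delta\neq-1$.} Write $f=x+x^2+x^3+a x^4+b x^5+O(x^6)$ with $a=14/24=7/12$ and $b=d_5/120$, using Lemma~\ref{lem:low-arity-delta-mN}. Then $h=x-x^2+x^3-ax^4+bx^5+\cdots$.
\begin{itemize}
\item Through order four, $f(h)=x$ holds identically. This is automatic for a self-dual series, so these coefficients give no information.
\item At order five, collect the $x^5$-coefficients of $h$, $h^2$, $h^3$, $ah^4$ and $bh^5$. Their sum should come out to $2b-6a+4$.
\end{itemize}
Koszulity would then force $b=3a-2=-1/4$, that is, $\dim\mathcal M_\delta(5)=-30<0$, which is absurd. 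So no computation of $\dim\mathcal M_\delta(5)$ is needed: the arity-four value $14$ alone is incompatible with Koszulity. I would double-check this order-five coefficient carefully, since the whole case rests on it.

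\emph{Case $\delta=-1$.} Here $a=20/24$, and the same relation predicts $b=3a-2=1/2$, i.e.\ $d_5=60$. This matches the table, so order five gives no obstruction. I would continue the recursion to orders six and seven: compute the $x^6$ and $x^7$ coefficients of $f(h)-x$ as polynomials in $d_4,\dots,d_7$, substitute $d_4=20$ and $d_5=60$, and obtain predicted values of $d_6$ and $d_7$ to compare with $126$ and $252$. I expect a mismatch at order six or seven, which is presumably why the lemma lists dimensions up to arity $7$ for this value of $\delta$.

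The main obstacle is this $\delta=-1$ case. It relies entirely on the computer-generated dimensions $126$ and $252$, and the order-six and order-seven coefficient extraction must be carried out without arithmetic slips. If the series happened to agree through order seven, I would instead check whether the compositional inverse has a non-integral or sign-violating coefficient. Failing that, I would fall back on showing that the Koszul complex has nonvanishing homology in arity four, for example by computing the dimension of $\mathcal M_{-1}^{\text{!`}}$.
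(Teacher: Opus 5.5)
Your proposal is correct and is essentially the paper's proof. Self-duality reduces the Ginzburg--Kapranov equation to an involution condition, and your order-five coefficient $2b-6a+4=(d_5+30)/60$ forces $\dim\mathcal M_\delta(5)=-30$ for $\delta\neq-1$. For $\delta=-1$ the paper finds $H_{-1}(H_{-1}(t))=t-\frac{7}{90}t^7+O(t^8)$.

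One small correction to your recursion claim: $d_n$ enters the $x^n$ coefficient as $(1-(-1)^n)d_n/n!$, so only odd arities are predicted. The $x^6$ coefficient vanishes automatically once lower orders do, so it gives no predicted $d_6$; the value $d_6=126$ is an input. The obstruction appears at order seven, where the equation would force $d_7=448\neq252$.
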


\begin{proof}
By Theorem~\ref{thm:quadratic-dual-delta-mN},
the operad $\mathcal M_\delta$ is quadratically self-dual.
Let
\[
H_\delta(t)
=
\sum_{n\ge1}(-1)^n
\frac{\dim\mathcal M_\delta(n)}{n!}t^n.
\]
If $\mathcal M_\delta$ were Koszul, then the
Ginzburg-Kapranov functional equation would give
\[
H_\delta(H_\delta(t))=t.
\]

Suppose first that $\delta\neq-1$. By
Lemma~\ref{lem:low-arity-delta-mN},
\[
H_\delta(t)
=
-t+t^2-t^3+\frac7{12}t^4
-\frac{d_5(\delta)}{120}t^5+O(t^6),
\]
where $d_5(\delta)=\dim\mathcal M_\delta(5)$.
Hence
\[
H_\delta(H_\delta(t))
=
t+\frac{d_5(\delta)+30}{60}t^5+O(t^6).
\]
Thus the functional equation would force
$d_5(\delta)=-30$, which is impossible.

For $\delta=-1$, the same lemma gives
\[
H_{-1}(t)
=
-t+t^2-t^3+\frac56t^4-\frac12t^5
+\frac7{40}t^6-\frac1{20}t^7+O(t^8),
\]
and therefore
\[
H_{-1}(H_{-1}(t))
=
t-\frac7{90}t^7+O(t^8)\neq t.
\]
Hence $\mathcal M_\delta$ is not Koszul for every
$\delta\in\mathbb C^\times$.
\end{proof}
\begin{remark}
From the operadic viewpoint, $\delta$-mock-Novikov algebras are
close to classical Novikov algebras: their operads exhibit analogous
quadratic duality and are non-Koszul
\cite{Dzhumadildaev2011}. In contrast, from the viewpoint of
finite-dimensional structure, $\delta$-mock-Novikov algebras behave
more like mock-Lie algebras, since they are nilpotent, whereas
finite-dimensional Novikov algebras need not be nilpotent.
\end{remark}

\section{Nilpotency of \texorpdfstring{$\delta$}{delta}-mock-Novikov algebras}\label{sec4}

In this section, we prove that every finite-dimensional
$\delta$-mock-Novikov algebra over a field of characteristic zero is
nilpotent.

\begin{definition}
Let $(\mathcal{A},\circ)$ be an algebra, not necessarily associative. For
subspaces $\mathcal{U},\mathcal{V}\subseteq \mathcal{A}$, set
$\mathcal{U}\circ \mathcal{V}
=\operatorname{span}\{u\circ v\mid u\in \mathcal{U},\ v\in \mathcal{V}\}$.
Define the powers of $\mathcal{A}$ recursively by $\mathcal{A}^1=\mathcal{A}$ and
\[
\mathcal{A}^n=\sum_{i=1}^{n-1}\mathcal{A}^i\circ \mathcal{A}^{n-i},
\qquad n\geq2.
\]
The algebra $\mathcal{A}$ is called \emph{nilpotent} if $\mathcal{A}^N=0$ for some
integer $N\geq2$. The smallest such $N$ is called the
\emph{nilpotency index} of $\mathcal{A}$.
\end{definition}

Let $\mathcal{A}$ be an algebra, not necessarily associative.
For each $x\in \mathcal{A}$, let
\[
L_x(y)=xy,\qquad R_x(y)=yx,\qquad y\in \mathcal{A}.
\]
The \emph{multiplication algebra} $\mathcal{M}(\mathcal{A})$ is the non-unital
associative subalgebra of $\operatorname{End}(\mathcal{A})$ generated by all
$L_x$ and $R_x$, with $x\in \mathcal{A}$. 

\begin{lemma}\cite[Corollary~3]{Bergman2011}
\label{lem:mult}
The algebra $\mathcal{A}$ is nilpotent if and only if $\mathcal{M}(\mathcal{A})$ is
nilpotent.
\end{lemma}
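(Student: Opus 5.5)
The plan is to prove both implications directly, by translating nonassociative monomials in $\mathcal{A}$ into words in the operators $L_x,R_x$ and back. First I will record the standard fact, proved by induction on $n$ from the recursive definition, that $\mathcal{A}^n$ is spanned by the nonassociative monomials of degree $n$ in elements of $\mathcal{A}$, that is, by products of $n$ elements with an arbitrary bracketing. Consequently the chain $\mathcal{A}\supseteq\mathcal{A}^2\supseteq\cdots$ is decreasing. Since $\mathcal{M}(\mathcal{A})$ is generated as a non-unital associative algebra by the $L_x$ and $R_x$, the power $\mathcal{M}(\mathcal{A})^k$ is spanned by the words $W_1W_2\cdots W_m$ with $m\geq k$ and each $W_i\in\{L_x,R_x\mid x\in\mathcal{A}\}$.

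For the implication ``$\mathcal{A}$ nilpotent $\Rightarrow$ $\mathcal{M}(\mathcal{A})$ nilpotent'', I will show by induction on $m$ that for every $y\in\mathcal{A}$ and every word $W_1\cdots W_m$ in the generators, the element $W_1\cdots W_m(y)$ lies in $\mathcal{A}^{m+1}$. The inductive step is that $L_x$ and $R_x$ send $\mathcal{A}^{m}$ into $\mathcal{A}\circ\mathcal{A}^m+\mathcal{A}^m\circ\mathcal{A}\subseteq\mathcal{A}^{m+1}$. Hence if $\mathcal{A}^N=0$, every word of length $N-1$ annihilates $\mathcal{A}$, and so $\mathcal{M}(\mathcal{A})^{N-1}=0$.

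For the converse, suppose $\mathcal{M}(\mathcal{A})^k=0$, so that every word of length at least $k$ is the zero map. Take a monomial $u$ of degree $n$ and view it as a planar binary tree with $n$ leaves. For any leaf $x$, walking from the root down to $x$ exhibits $u$ as $W_1W_2\cdots W_d(x)$. Here $d$ is the depth of the leaf, and at each node on the path $W_i$ is $L_s$ or $R_s$, where $s$ is the sibling subtree at that node (which is itself an element of $\mathcal{A}$). A binary tree in which every leaf has depth at most $k-1$ has at most $2^{k-1}$ leaves. So if $n\geq 2^{k}$, some leaf has depth $d\geq k$, and then $u=W_1\cdots W_d(x)=0$. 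By the spanning fact above, $\mathcal{A}^{2^k}=0$, and $\mathcal{A}$ is nilpotent.

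The only point needing care is the converse direction. There one must observe that the operators arising along a root-to-leaf path are genuine generators $L_s,R_s$ of $\mathcal{M}(\mathcal{A})$, because the subtrees $s$ are arbitrary elements of $\mathcal{A}$. One then needs the elementary bound between the number of leaves and the maximal depth, which gives the exponential bound $2^k$ rather than a linear one. The rest is bookkeeping.
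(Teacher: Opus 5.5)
Your proof is correct. The paper does not prove this lemma; it only cites Bergman's Corollary~3, so your proposal supplies the standard elementary argument where the paper has none.

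Both directions hold up. The forward direction follows from $\mathcal{A}\circ\mathcal{A}^m+\mathcal{A}^m\circ\mathcal{A}\subseteq\mathcal{A}^{m+1}$, which is built into the recursive definition. In the converse, factoring a monomial along a root-to-leaf path as $W_1\cdots W_d(x)$ with $W_i\in\{L_s,R_s\}$ is legitimate because each sibling subtree evaluates to an element $s\in\mathcal{A}$. Hence a word of length $d\geq k$ lies in $\mathcal{M}(\mathcal{A})^k$.

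Two small points:
\begin{itemize}
\item The monotonicity $\mathcal{A}^{n+1}\subseteq\mathcal{A}^n$ does not follow from the spanning fact alone. You also need the remark that contracting an innermost product $x_i\circ x_j$ to a single element turns a monomial of degree $n+1$ into one of degree $n$. Alternatively, you can avoid monotonicity by splitting a long word into a prefix and a suffix of length $N-1$.
\item Your leaf count actually gives $\mathcal{A}^{2^{k-1}+1}=0$, slightly better than $\mathcal{A}^{2^k}=0$.
\end{itemize}

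What your version buys over the citation is a self-contained proof with explicit bounds: $\mathcal{A}^N=0\Rightarrow\mathcal{M}(\mathcal{A})^{N-1}=0$ and $\mathcal{M}(\mathcal{A})^k=0\Rightarrow\mathcal{A}^{2^{k-1}+1}=0$. These hold for arbitrary algebras over any field, with no finiteness assumption. In Theorem~\ref{thm:nilpotent}, which uses only your converse direction, this turns $\mathcal{M}(\mathcal{A})^{pq}=0$ into the explicit bound $\mathcal{A}^{2^{pq-1}+1}=0$. The citation buys only brevity.
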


\begin{lemma}[\cite{Jacobson1952}]
\label{lem:Jacobson}
Let $\mathcal{V}$ be a finite-dimensional vector space over a field $\mathbb{F}$,
and let $\mathcal{W}$ be a subset of
$\operatorname{End}_{\mathbb{F}}(\mathcal{V})$.
Suppose that $\mathcal{W}$ is \emph{weakly closed}, that is, for any
$X,Y\in\mathcal{W}$, there exists a scalar
$\gamma(X,Y)\in\mathbb{F}$ such that
\[
XY+\gamma(X,Y)YX\in\mathcal{W}.
\]
If every element of $\mathcal{W}$ is nilpotent, then the
(non-unital) associative algebra generated by $\mathcal{W}$ is
nilpotent.
\end{lemma}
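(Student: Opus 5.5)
The statement is Jacobson's weakly closed theorem, which the paper quotes from \cite{Jacobson1952}. If I were to reprove it, I would follow Jacobson's classical argument by induction on $\dim\mathcal V$. Write $\mathcal W^*$ for the non-unital associative algebra generated by $\mathcal W$.

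\emph{Reduction to a common null vector.} The key claim is that, when $\mathcal V\neq0$, there is a nonzero $v\in\mathcal V$ with $Xv=0$ for all $X\in\mathcal W$. Given this, every element of $\mathcal W^*$ kills $v$, so each $X\in\mathcal W$ induces an operator $\bar X$ on $\bar{\mathcal V}=\mathcal V/\mathbb F v$. The induced set $\bar{\mathcal W}$ is weakly closed with the same scalars $\gamma(X,Y)$, and every $\bar X$ is nilpotent. Since $\dim\bar{\mathcal V}<\dim\mathcal V$, induction gives $(\mathcal W^*)^m\mathcal V\subseteq\mathbb F v$ for some $m$. Hence $(\mathcal W^*)^{m+1}\mathcal V=0$, and $\mathcal W^*$ is nilpotent.

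\emph{Existence of a common null vector.} Choose $\mathcal S\subseteq\mathcal W$ maximal among weakly closed subsets whose generated algebra $\mathcal S^*$ is nilpotent. Such sets exist: for a single nilpotent $X$, the set $\{cX^k\}$ is weakly closed and generates a nilpotent algebra. I would show that $\mathcal S=\mathcal W$, or at least that $\mathcal S^*\supseteq\mathcal W$.

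Suppose not. Since $\mathcal S^*$ is nilpotent, for $W\in\mathcal W$ consider the operators $Y\mapsto YS$ and $Y\mapsto SY$ for $S\in\mathcal S$, acting on $\operatorname{End}(\mathcal V)$. Iterated weak closure shows that long products of the form
\[
(\cdots((W S_1+\gamma_1 S_1W)S_2+\gamma_2 S_2(\cdot))\cdots)
\]
lie in $\mathcal W$. Expanding such a product produces words containing many letters from $\mathcal S$, so the nilpotency of $\mathcal S^*$ forces it to vanish once it is long enough. Taking the last nonzero stage gives an element $W'\in\mathcal W$ with $W'\notin\mathcal S^*$, but with $W'S+\gamma SW'\in\mathcal S^*$ for all $S\in\mathcal S$. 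Thus $W'$ ``normalizes'' $\mathcal S^*$ modulo weak commutators. Adjoining $W'$ to $\mathcal S$ and closing under the weak products stays inside $\mathcal W$. Using $(\mathcal S^*)^N=0$ together with the nilpotency of $W'$, one shows that the enlarged algebra $\mathcal S^*+\sum_k\mathbb F W'^k+\cdots$ is still nilpotent. For example, the flag $\mathcal V\supseteq\mathcal S^*\mathcal V\supseteq(\mathcal S^*)^2\mathcal V\supseteq\cdots$ is $W'$-stable, and $W'$ acts nilpotently on each layer. This contradicts maximality.

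Hence $\mathcal W\subseteq\mathcal S^*$ is nilpotent. In particular, if $k$ is the largest integer with $(\mathcal S^*)^k\mathcal V\neq0$, then any nonzero vector of $(\mathcal S^*)^k\mathcal V$ is a common null vector.

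\emph{Main obstacle.} The delicate step is the enlargement argument. One must check that the iterated weak products really stay in $\mathcal W$ even though $\gamma$ depends on the pair, and that they eventually vanish. One must then verify that adjoining $W'$ preserves nilpotency, which requires stability of the $\mathcal S^*$-flag under $W'$. I would first try to handle this via the identity $W'S=-\gamma SW'+(\text{element of }\mathcal S^*)$. This lets one push every occurrence of $W'$ to the right in any word, so a word with many $\mathcal S$-letters or many $W'$-letters vanishes.
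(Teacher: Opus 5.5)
The paper gives no proof of this lemma; it is quoted from Jacobson. Your argument is a correct, self-contained proof in the spirit of Jacobson's maximal-subset argument. Note that your enlargement step finishes the proof by itself: once $\mathcal W\subseteq\mathcal S^*$, you already have $\mathcal W^*\subseteq\mathcal S^*$ nilpotent. The flag argument uses no induction hypothesis, because you adjoin only the single element $W'$. So the opening reduction to a common null vector and the induction on $\dim\mathcal V$ can simply be dropped. The paper's citation buys brevity; your route makes Theorem~\ref{thm:nilpotent} independent of the literature at the cost of about a page.

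Three points should be tightened.

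\emph{Existence of $\mathcal S$.} The set $\{cX^k\}$ need not be contained in $\mathcal W$, so it does not show that admissible $\mathcal S$ exist. Take $\mathcal S=\emptyset$ (or the weak closure of $\{X\}$ inside $\mathcal W$). Then choose $\mathcal S$ with $\dim\mathcal S^*$ maximal; this avoids Zorn's lemma and makes the final contradiction immediate, since $W'\notin\mathcal S^*$.

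\emph{The normalizing element.} ``The last nonzero stage'' does not produce it: a nonzero stage may already lie in $\mathcal S^*$, and a fixed sequence $S_1,S_2,\dots$ says nothing about all $S\in\mathcal S$. Choose the $S_i$ greedily instead. Start from $W_1\in\mathcal W\setminus\mathcal S^*$, and as long as some $S\in\mathcal S$ gives $W_iS+\gamma(W_i,S)SW_i\notin\mathcal S^*$, call this element $W_{i+1}$. Each $W_{i+1}$ is a combination of words $PW_1Q$ with $P,Q$ products of elements of $\mathcal S$ of total length $i$. Hence $W_{2N}=0$ when $(\mathcal S^*)^N=0$, so the process halts at some $W'\notin\mathcal S^*$ with $W'S+\gamma(W',S)SW'\in\mathcal S^*$ for all $S\in\mathcal S$.

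\emph{The flag argument.} Spell it out. The inclusion $W'(\mathcal S^*)^k\subseteq(\mathcal S^*)^kW'+(\mathcal S^*)^k$ shows that $F_k=(\mathcal S^*)^k\mathcal V$ is $W'$-stable. Any product of at least $d=\dim\mathcal V$ factors from $\mathcal S\cup\{W'\}$ either contains a factor from $\mathcal S$ or equals a power $W'^{\,m}$ with $m\ge d$, which is zero; in both cases it maps $F_k$ into $F_{k+1}$. Thus the algebra generated by $\mathcal S\cup\{W'\}$ has vanishing $dN$-th power, and this algebra contains the envelope of the weak closure of $\mathcal S\cup\{W'\}$ in $\mathcal W$. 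Merely pushing $W'$ to the right, as in your last paragraph, only kills words with at least $N$ factors from $\mathcal S$. The remaining long words vanish for a different reason: by pigeonhole they contain $d$ consecutive factors $W'$.
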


\begin{lemma}\label{lem16}
Let $(\mathcal{A},\circ)$ be a finite-dimensional $\delta$-mock-Novikov algebra
over a field $\mathbb{F}$ of characteristic zero, where
$\delta\neq0$. The associative algebras generated by the right and
left multiplication operators are nilpotent.
\end{lemma}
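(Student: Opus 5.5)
The plan is to apply Jacobson's weakly closed theorem (Lemma~\ref{lem:Jacobson}) twice: once to $\mathcal W_R=\{R_x\mid x\in\mathcal A\}$ and once to $\mathcal W_L=\{L_x\mid x\in\mathcal A\}$. For each family I need two things: that it is weakly closed, and that each of its elements is nilpotent. The only property of the field I expect to use is $\operatorname{char}\mathbb F\neq 2$, together with $\delta\neq0$.

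\emph{Right multiplications.} I would rewrite \eqref{deltaMN2} in operator form. Applied to $a$, the identity $(a\circ b)\circ c+(a\circ c)\circ b=0$ says
\[
R_cR_b+R_bR_c=0 \qquad\text{for all } b,c\in\mathcal A.
\]
Taking $\gamma\equiv1$, we get $R_bR_c+R_cR_b=0=R_0\in\mathcal W_R$, so $\mathcal W_R$ is weakly closed. Setting $b=c=x$ gives $2R_x^2=0$, hence $R_x^2=0$ because $\operatorname{char}\mathbb F\neq2$. Lemma~\ref{lem:Jacobson} then shows that the associative algebra generated by the $R_x$ is nilpotent.

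\emph{Left multiplications.} Applied to $c$, identity \eqref{deltaMN1} reads
\[
L_aL_b+L_bL_a=-\delta\,L_{a\circ b+b\circ a}.
\]
The right-hand side lies in $\mathcal W_L$ because $x\mapsto L_x$ is linear. So again $\gamma\equiv1$ shows that $\mathcal W_L$ is weakly closed.

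For nilpotency of each $L_x$, set $a=b=x$ in the displayed identity to get $L_x^2=-\delta L_{x\circ x}$. Iterating once more gives
\[
L_x^4=\delta^2L_{x\circ x}^2=-\delta^3L_{(x\circ x)\circ(x\circ x)}.
\]
It therefore suffices to show $(x\circ x)\circ(x\circ x)=0$, which I would do in three steps.
\begin{enumerate}[(1)]
\item Taking $a=b=c=x$ in \eqref{deltaMN2} gives $2(x\circ x)\circ x=0$, so $(x\circ x)\circ x=0$.
\item Taking $a=b=c=x$ in \eqref{deltaMN1} gives $2\delta(x\circ x)\circ x+2x\circ(x\circ x)=0$, so by (1) also $x\circ(x\circ x)=0$.
\item Applying \eqref{deltaMN2} with $a=b=x$ and $c=x\circ x$ gives
\[
(x\circ x)\circ(x\circ x)=-\bigl(x\circ(x\circ x)\bigr)\circ x=0,
\]
using (2).
\end{enumerate}
Hence $L_x^4=0$, and Lemma~\ref{lem:Jacobson} yields nilpotency of the associative algebra generated by the $L_x$.

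The main obstacle is the nilpotency of the individual operators $L_x$. Unlike $R_x$, the operator $L_x$ does not square to zero directly. The argument therefore relies on the identity $L_x^2=-\delta L_{x\circ x}$ and on the vanishing of the low powers of $x$ computed in steps (1)--(3) above. Everything else is a direct translation of the defining identities into operator form.
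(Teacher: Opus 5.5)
Your proof is correct and follows essentially the paper's route: the anticommutator identity $L_aL_b+L_bL_a=-\delta L_{a\circ b+b\circ a}$ makes $\{L_x\}$ weakly closed, nilpotency of each $L_x$ comes from $L_x^2=-\delta L_{x\circ x}$, and Jacobson's theorem finishes. The only differences are minor. The paper treats $\{R_x\}$ by a direct counting argument instead of Jacobson: among $d+1$ pairwise anticommuting factors two coincide, which gives $\mathcal{R}(\mathcal{A})^{d+1}=0$. It also obtains the sharper $L_x^3=0$ by applying the anticommutator identity to the pair $(x,x\circ x)$, which makes your step (3) unnecessary.
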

\begin{proof}
Let $\mathcal{R}(\mathcal{A})$ and $\mathcal{L}(\mathcal{A})$ denote
the non-unital associative algebras generated by the right and left
multiplication operators, respectively:
\[
\mathcal{R}(\mathcal{A})
=\langle R_x\mid x\in\mathcal{A}\rangle,
\qquad
\mathcal{L}(\mathcal{A})
=\langle L_x\mid x\in\mathcal{A}\rangle.
\]
We prove that both algebras are nilpotent.

First consider $\mathcal{R}(\mathcal{A})$. By \eqref{deltaMN2}, we have
$R_yR_z=-R_zR_y$ for all $y,z\in\mathcal{A}$. In particular,
$R_y^2=0$ because $\operatorname{char}\mathbb{F}=0$.

Let $d=\dim\mathcal{A}$ and let $e_1,\ldots,e_d$ be a basis of
$\mathcal{A}$. By multilinearity, it suffices to consider a product
$R_{e_{i_1}}\cdots R_{e_{i_{d+1}}}$. Two of the indices must coincide.
Using the anticommutation relation, the corresponding identical
operators can be moved next to each other, and the product is therefore
zero. Hence
$\mathcal{R}(\mathcal{A})^{d+1}=0$. In particular, taking $q=d+1$, we
have $R_{y_1}\cdots R_{y_q}=0$ for all
$y_1,\ldots,y_q\in\mathcal{A}$.

We now consider $\mathcal{L}(\mathcal{A})$. Identity
\eqref{deltaMN1} can be written in operator form as
\begin{equation}\label{eq15}
L_xL_y+L_yL_x
=-\delta L_{x\circ y+y\circ x}.
\end{equation}
We first show that every left multiplication operator is nilpotent.
Write $x^2=x\circ x$. Substituting $x=y=z$ into
\eqref{deltaMN2} and then into \eqref{deltaMN1} gives
$x^2\circ x=0$ and $x\circ x^2=0$, respectively. Setting $y=x$ in
\eqref{eq15} gives $L_{x^2}=-\delta^{-1}L_x^2$. Applying
\eqref{eq15} to $x$ and $x^2$, we obtain
\[
0=L_xL_{x^2}+L_{x^2}L_x
=-2\delta^{-1}L_x^3.
\]
Thus $L_x^3=0$ for every $x\in\mathcal{A}$.

Finally, the set
$\mathcal{W}=\{L_x\mid x\in\mathcal{A}\}$ is weakly closed, since
\eqref{eq15} shows that
$L_xL_y+L_yL_x=L_{-\delta(x\circ y+y\circ x)}\in\mathcal{W}$.
Every element of $\mathcal{W}$ is nilpotent, so
Lemma~\ref{lem:Jacobson} implies that
$\mathcal{L}(\mathcal{A})$ is nilpotent. Consequently, there exists a
positive integer $p$ such that
$L_{x_1}\cdots L_{x_p}=0$ for all
$x_1,\ldots,x_p\in\mathcal{A}$.
\end{proof}

\begin{theorem}\label{thm:nilpotent}
Let $(\mathcal{A},\circ)$ be a finite-dimensional $\delta$-mock-Novikov algebra
over a field $\mathbb{F}$ of characteristic zero, where
$\delta\neq0$. Then $\mathcal{A}$ is nilpotent.
\end{theorem}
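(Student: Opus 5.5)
The plan is to combine Lemma~\ref{lem16} with Lemma~\ref{lem:mult}. Lemma~\ref{lem16} shows that the algebra $\mathcal{L}(\mathcal{A})$ generated by the left multiplications and the algebra $\mathcal{R}(\mathcal{A})$ generated by the right multiplications are each nilpotent. So there are integers $p,q$ such that
\[
L_{x_1}\cdots L_{x_p}=0,\qquad R_{y_1}\cdots R_{y_q}=0
\]
for all arguments. It then suffices to control mixed words in the $L$'s and $R$'s, that is, to show that $\mathcal{M}(\mathcal{A})$ is nilpotent.

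\textbf{Commutation relations.} First I derive operator identities that move every $R$ to the right of every $L$.
\begin{itemize}
\item Reading \eqref{deltaMN2} as an operator identity in the first argument gives $R_zL_x=-L_{x\circ z}$.
\item Reading \eqref{deltaMN1} as an operator in the middle argument $b$ gives
\[
\delta R_cL_a+\delta R_cR_a+L_aR_c+R_{a\circ c}=0 .
\]
Combined with the previous identity, this becomes
\[
L_aR_c=\delta L_{a\circ c}-\delta R_cR_a-R_{a\circ c}.
\]
\end{itemize}
Give each operator $L_u$ or $R_u$, where $u$ is a $\circ$-monomial, the weight $\deg u$. Both rewriting rules preserve total weight. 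Each rule either shortens the word or lowers the number of $L$'s standing to the left of an $R$, so the process terminates. Hence every word of total weight $N$ in $\mathcal{M}(\mathcal{A})$ is a linear combination of normal words
\[
L_{u_1}\cdots L_{u_i}R_{v_1}\cdots R_{v_j},\qquad i<p,\ j<q,
\]
with $\sum\deg u_k+\sum\deg v_l=N$, and with the $u_k,v_l$ monomials. In particular, some argument in each surviving term has degree at least $N/(p+q)$.

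\textbf{Passing to powers of $\mathcal{A}$.} A $\circ$-monomial of degree $n$ has the form $w(a)$, where $w$ is a word of weight $n-1$ in the $L$'s and $R$'s and $a\in\mathcal{A}$. By the normal form, $\mathcal{A}^n$ is spanned by elements $L_{u_1}\cdots L_{u_i}R_{v_1}\cdots R_{v_j}(a)$ with at most $p+q-2$ factors and total weight $n-1$.

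I would then argue by strong induction on the degree, aiming at $\mathcal{A}^{n}\subseteq\mathcal{A}^{n+1}$ forcing $\mathcal{A}^n=0$. Let $\mathcal{I}=\mathcal{A}^s$ be the stable term of the descending chain $\mathcal{A}\supseteq\mathcal{A}^2\supseteq\cdots$, which exists by finite-dimensionality. The normal form shows that $\mathcal{I}$ is spanned by vectors of the form $L_{u}X(a)$ or $R_{v}Y(a)$ with $u,v\in\mathcal{I}$ and boundedly many factors. I would restrict the $\delta$-mock-Novikov structure to the ideal $\mathcal{I}$ and try to show that $\mathcal{I}\circ\mathcal{A}+\mathcal{A}\circ\mathcal{I}\subsetneq\mathcal{I}$ unless $\mathcal{I}=0$. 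The tool for this is Lemma~\ref{lem:Jacobson}, applied to the operators $L_u,R_u$ with $u\in\mathcal{I}$, acting on $\mathcal{I}$.

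\textbf{Main obstacle.} The difficult step is the final one. The normal form bounds only the \emph{number} of factors, not the degrees of their arguments, so a priori large degree could concentrate in a single argument. I would first try to iterate the normal form on the heavy argument $u_k$ itself, since $u_k$ is again a word applied to an element. Each iteration pushes a degree-$n$ monomial into a combination in which some argument has degree at least $n/(p+q)$. After finitely many rounds this should contradict the $L$- and $R$-nilpotency bounds, giving $\mathcal{A}^N=0$ for some $N$, and then Lemma~\ref{lem:mult} gives nilpotency of $\mathcal{M}(\mathcal{A})$. If this iteration does not close up, the fallback is the dimension induction on $\mathcal{I}$ described above.
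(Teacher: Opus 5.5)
Your overall plan is the paper's (Lemma~\ref{lem16}, Lemma~\ref{lem:mult}, and the relation $R_zL_x=-L_{x\circ z}$). However, the proposal never proves that $\mathcal{M}(\mathcal{A})$ is nilpotent. The final step is left as a hope (``this should contradict\ldots'', ``if this iteration does not close up, the fallback is\ldots''), so there is a genuine gap.

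The obstacle you name is also a misdiagnosis. The bounds from Lemma~\ref{lem16} are uniform: $L_{x_1}\cdots L_{x_p}=0$ and $R_{y_1}\cdots R_{y_q}=0$ for \emph{all} $x_i,y_j\in\mathcal{A}$, including arbitrarily long $\circ$-monomials. So the degrees of the arguments never need to be controlled; only the number and arrangement of factors matter.

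The missing idea is that the single rule $R_yL_x=-L_{x\circ y}$ preserves the number of $L$-factors in a word. Take any product of $pq$ generators $L_x,R_y$.
\begin{itemize}
\item If it contains at least $p$ factors $L$, collapse adjacent pairs $R_yL_x\mapsto -L_{x\circ y}$ until none remain. You reach $\pm L_{a_1}\cdots L_{a_s}R_{b_1}\cdots R_{b_t}$ with $s\geq p$, which is $0$.
\item If it contains at most $p-1$ factors $L$, these split the $R$'s into at most $p$ runs. Since $(p-1)+p(q-1)<pq$, some run has $q$ consecutive $R$'s, so the product is $0$.
\end{itemize}
Hence $\mathcal{M}(\mathcal{A})^{pq}=0$, and Lemma~\ref{lem:mult} finishes. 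This is exactly the paper's argument.

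Your second rewriting rule, $L_aR_c=\delta L_{a\circ c}-\delta R_cR_a-R_{a\circ c}$, is what destroyed this invariant. The term $R_cR_a$ trades an $L$ for an $R$, so the $L$-count is no longer conserved. A short normal form then says nothing about vanishing, since every element of $\mathcal{M}(\mathcal{A})$ is trivially a combination of short words. Note also that with both rules in force the irreducible words are pure $L$-words or pure $R$-words, not $L\cdots LR\cdots R$.

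The closing logic is reversed as well. Obtaining $\mathcal{A}^N=0$ would already be the theorem, so there would be nothing left to deduce via $\mathcal{M}(\mathcal{A})$.

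The fallback is also unsupported. Lemma~\ref{lem:Jacobson} needs $\{L_u,R_u\mid u\in\mathcal{I}\}$ to be weakly closed. But $L_uR_v+\gamma R_vL_u=(\delta-\gamma)L_{u\circ v}-\delta R_vR_u-R_{u\circ v}$ is not visibly of the form $L_w$ or $R_w$ for any choice of $\gamma$.
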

\begin{proof}
By Lemma~\ref{lem:mult}, it suffices to prove that the multiplication
algebra $\mathcal{M}(\mathcal{A})$ is nilpotent. For
$x,y,z\in\mathcal{A}$, identity~\eqref{deltaMN2} gives
\[
(R_yL_x)(z)=(x\circ z)\circ y=-(x\circ y)\circ z.
\]
Hence
\begin{equation}\label{eq17}
R_yL_x=-L_{x\circ y}.
\end{equation}

By Lemma~\ref{lem16}, there exist positive integers $p$ and $q$ such
that
\[
L_{x_1}\cdots L_{x_p}=0,
\qquad
R_{y_1}\cdots R_{y_q}=0
\]
for all $x_i,y_j\in\mathcal{A}$. We show that every product of $pq$
generators of $\mathcal{M}(\mathcal{A})$ is zero.

Suppose first that the product contains at least $p$ left
multiplication operators. Repeatedly replace each adjacent factor
$R_yL_x$ by $-L_{x\circ y}$. Each replacement removes one right
multiplication operator while preserving the number of left
multiplication operators. Thus the product either becomes zero or
takes the form
$\pm L_{a_1}\cdots L_{a_s}R_{b_1}\cdots R_{b_t}$ with $s\geq p$,
and hence is zero.

Suppose instead that the product contains at most $p-1$ left
multiplication operators. These divide the right multiplication
operators into at most $p$ consecutive blocks. If every block had
length at most $q-1$, the total length of the product would be at most
\[
(p-1)+p(q-1)=pq-1,
\]
a contradiction. Therefore one block contains at least $q$
consecutive right multiplication operators and is zero.

Thus every product of $pq$, and consequently every longer product,
of generators of $\mathcal{M}(\mathcal{A})$ vanishes. Therefore
$\mathcal{M}(\mathcal{A})^{pq}=0$, and Lemma~\ref{lem:mult} implies
that $\mathcal{A}$ is nilpotent.
\end{proof}

\section{Low-dimensional  classification of
\texorpdfstring{$\delta$}{delta}-mock-Novikov algebras}
\label{sec5}
In dimensions at most four, we shall prove that the
$\delta$-mock-Novikov identities are equivalent to two-step
nilpotency, thereby reducing the classification to that of complex
two-step nilpotent algebras.

\subsection{Reduction to two-step nilpotent algebras}

\begin{lemma}\label{lem:lift}
Let $\mathcal{A}$ be a nilpotent algebra.  If the images of
$x_1,\ldots,x_m\in \mathcal{A}$ span $\mathcal{A}/\mathcal{A}^2$, then $x_1,\ldots,x_m$ generate
$\mathcal{A}$ as an algebra.
\end{lemma}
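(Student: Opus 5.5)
Let $\mathcal{B}$ denote the subalgebra of $\mathcal{A}$ generated by $x_1,\ldots,x_m$. The plan is to show $\mathcal{A}=\mathcal{B}+\mathcal{A}^n$ for every $n\ge1$, by induction on $n$. Nilpotency of $\mathcal{A}$ then finishes the argument: choosing $n=N$ with $\mathcal{A}^N=0$ gives $\mathcal{A}=\mathcal{B}$.

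The base case $n=1$ is trivial. The case $n=2$ is exactly the hypothesis: the images of the $x_i$ span $\mathcal{A}/\mathcal{A}^2$, so $\mathcal{A}=\operatorname{span}\{x_1,\ldots,x_m\}+\mathcal{A}^2\subseteq\mathcal{B}+\mathcal{A}^2$.

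For the inductive step, the key claim I will prove first is
\[
\mathcal{A}^k\subseteq \mathcal{B}+\mathcal{A}^{k+1}\quad\text{for every } k\ge1 .
\]
This claim suffices. If $\mathcal{A}=\mathcal{B}+\mathcal{A}^n$, then
\[
\mathcal{A}\subseteq\mathcal{B}+\mathcal{B}+\mathcal{A}^{n+1}=\mathcal{B}+\mathcal{A}^{n+1}.
\]

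To prove the claim I use induction on $k$, with $k=1$ being the hypothesis. For $k\ge2$, the definition gives $\mathcal{A}^k=\sum_{i+j=k}\mathcal{A}^i\circ\mathcal{A}^j$. By induction, each factor satisfies $\mathcal{A}^i\subseteq \mathcal{B}+\mathcal{A}^{i+1}$ and $\mathcal{A}^j\subseteq\mathcal{B}+\mathcal{A}^{j+1}$. Expanding bilinearly produces four terms:
\begin{itemize}
\item $\mathcal{B}\circ\mathcal{B}\subseteq\mathcal{B}$, since $\mathcal{B}$ is a subalgebra;
\item $\mathcal{B}\circ\mathcal{A}^{j+1}$, $\mathcal{A}^{i+1}\circ\mathcal{B}$ and $\mathcal{A}^{i+1}\circ\mathcal{A}^{j+1}$, each lying in $\mathcal{A}^{k+1}$ or beyond.
\end{itemize}

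That last bullet is the one point needing care. I will use the standard fact that $\mathcal{A}^r\circ\mathcal{A}^s\subseteq\mathcal{A}^{r+s}$, which is immediate from the recursive definition, together with the monotonicity $\mathcal{A}^{r+1}\subseteq\mathcal{A}^r$. Monotonicity follows by induction: an element of $\mathcal{A}^{r+1}$ is a sum of products $\mathcal{A}^i\circ\mathcal{A}^{r+1-i}$, and each of these lies in $\mathcal{A}^r$ because $\mathcal{A}^{r+1-i}\subseteq\mathcal{A}^{r-i}$, or $\mathcal{A}^i\subseteq\mathcal{A}^{i-1}$ when $i=r$.

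There is no real obstacle; the only step requiring attention is this monotonicity and filtration property of the powers under the paper's definition. Choosing the correct induction statement, $\mathcal{A}^k\subseteq\mathcal{B}+\mathcal{A}^{k+1}$ rather than inducting directly on $\mathcal{A}$, makes the rest routine.
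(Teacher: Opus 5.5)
Your reduction to the claim $\mathcal{A}^k\subseteq\mathcal{B}+\mathcal{A}^{k+1}$ is fine, and so are the facts $\mathcal{A}^r\circ\mathcal{A}^s\subseteq\mathcal{A}^{r+s}$ and $\mathcal{A}^{r+1}\subseteq\mathcal{A}^r$. The gap is in the bullet about the cross terms. Your inductive hypothesis $\mathcal{A}^i\subseteq\mathcal{B}+\mathcal{A}^{i+1}$ only tells you that the $\mathcal{B}$-component lies in $\mathcal{B}\subseteq\mathcal{A}^1$, because $\mathcal{B}$ contains the generators $x_1,\dots,x_m$. Hence $\mathcal{B}\circ\mathcal{A}^{j+1}\subseteq\mathcal{A}^{j+2}$, and $j+2\ge k+1$ holds only when $i=1$. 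Symmetrically, $\mathcal{A}^{i+1}\circ\mathcal{B}\subseteq\mathcal{A}^{i+2}$ reaches $\mathcal{A}^{k+1}$ only when $j=1$.

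For $k=4$ and $i=j=2$, the inclusion $\mathcal{B}\circ\mathcal{A}^{3}\subseteq\mathcal{A}^5$ is false in general. Take the free nonassociative algebra on one generator $x_1$ modulo all monomials of degree at least $5$. There $x_1\circ((x_1x_1)x_1)\in\mathcal{B}\circ\mathcal{A}^3$ is nonzero, while $\mathcal{A}^5=0$. So the induction does not close as written. Splitting each factor $\mathcal{A}^i$ as $\mathcal{B}+\mathcal{A}^{i+1}$ discards the fact that the relevant $\mathcal{B}$-part has degree $i$, and a one-degree gain on the remainder cannot make up for that.

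There are two quick repairs.

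\emph{Strengthen the claim} to $\mathcal{A}^k\subseteq(\mathcal{B}\cap\mathcal{A}^k)+\mathcal{A}^{k+1}$. The cross terms then become $(\mathcal{B}\cap\mathcal{A}^i)\circ\mathcal{A}^{j+1}$ and $\mathcal{A}^{i+1}\circ(\mathcal{B}\cap\mathcal{A}^j)$, both of which lie in $\mathcal{A}^{k+1}$. Also $(\mathcal{B}\cap\mathcal{A}^i)\circ(\mathcal{B}\cap\mathcal{A}^j)\subseteq\mathcal{B}\cap\mathcal{A}^k$. With these, your induction goes through otherwise unchanged.

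\emph{Follow the paper}, which uses exactly the direct induction on $\mathcal{A}$ that you chose to avoid. Assume $\mathcal{A}=\mathcal{B}+\mathcal{A}^r$ and expand only $\mathcal{A}^2=\mathcal{A}\circ\mathcal{A}\subseteq(\mathcal{B}+\mathcal{A}^r)\circ(\mathcal{B}+\mathcal{A}^r)\subseteq\mathcal{B}+\mathcal{A}^{r+1}$. Each cross term now pairs $\mathcal{B}\subseteq\mathcal{A}^1$ with the whole degree-$r$ remainder, so it genuinely lands in $\mathcal{A}^{r+1}$. Then $\mathcal{A}=\mathcal{B}+\mathcal{A}^2\subseteq\mathcal{B}+\mathcal{A}^{r+1}$. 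The paper phrases this with $F^r\mathcal{A}$, the span of all products of at least $r$ factors, which makes the monotonicity you checked by hand automatic.
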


\begin{proof}
Let $\mathcal{B}$ be the subalgebra generated by $x_1,\ldots,x_m$.  Then
$\mathcal{A}=\mathcal{B}+\mathcal{A}^2$.  For $r\geq2$, let $F^r\mathcal{A}$ be the span of all products
involving at least $r$ factors, with arbitrary bracketing.  We prove
inductively that $\mathcal{A}=\mathcal{B}+F^r\mathcal{A}$.  The case $r=2$ is clear.  If
$\mathcal{A}=\mathcal{B}+F^r\mathcal{A}$, then
\[
 \mathcal{A}^2=(\mathcal{B}+F^r\mathcal{A})(\mathcal{B}+F^r\mathcal{A})\subseteq \mathcal{B}+F^{r+1}\mathcal{A},
\]
and hence $\mathcal{A}=\mathcal{B}+F^{r+1}\mathcal{A}$.  Since $\mathcal{A}$ is nilpotent, $F^N\mathcal{A}=0$ for
some $N$, and therefore $\mathcal{A}=\mathcal{B}$.
\end{proof}

\begin{lemma}\label{lem:D3}
Let $\mathcal{A}$ be a finite-dimensional $\delta$-mock-Novikov algebra.  If
$\dim(\mathcal{A}/\mathcal{A}^2)=1$, then $\dim \mathcal{A}\leq2$.
\end{lemma}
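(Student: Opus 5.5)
The plan is to combine nilpotency (Theorem~\ref{thm:nilpotent}) with Lemma~\ref{lem:lift}. This reduces the statement to a computation inside the subalgebra generated by a single element. Since $\mathcal{A}$ is finite-dimensional and, in characteristic zero, nilpotent, and since $\dim(\mathcal{A}/\mathcal{A}^2)=1$, we can choose $x\in\mathcal{A}$ whose image spans $\mathcal{A}/\mathcal{A}^2$. By Lemma~\ref{lem:lift}, $x$ generates $\mathcal{A}$ as an algebra. Hence $\mathcal{A}$ is spanned by all nonassociative monomials in the single letter $x$, and it suffices to show that every such monomial of degree at least $3$ vanishes. Then $\mathcal{A}=\operatorname{span}\{x,x^2\}$, where $x^2=x\circ x$, and $\dim\mathcal{A}\le2$.

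For degree $3$, the two monomials are $(x\circ x)\circ x$ and $x\circ(x\circ x)$. Both vanish, exactly as in the proof of Lemma~\ref{lem16}. Putting $a=b=c=x$ in \eqref{deltaMN2} gives $2x^2\circ x=0$, hence $x^2\circ x=0$. Putting $a=b=c=x$ in \eqref{deltaMN1} gives $2\delta x^2\circ x+2x\circ x^2=0$, hence $x\circ x^2=0$. Division by $2$ is allowed because we are in characteristic zero.

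For higher degrees, I would argue by induction on degree. Every monomial of degree $n\ge3$ has the form $u\circ v$ with $\deg u+\deg v=n$. If $\deg u\ge3$ or $\deg v\ge3$, the factor of degree at least $3$ is zero by the induction hypothesis. The only remaining case is $n=4$ with $u=v=x^2$, that is, the monomial $x^2\circ x^2$. Take $a=x^2$ and $b=c=x$ in \eqref{deltaMN2}; this gives $(x^2\circ x)\circ x+(x^2\circ x)\circ x=0$, which is only trivially true and does not help. Instead, take $a=b=x$ and $c=x^2$ in \eqref{deltaMN1}. This gives $2\delta\,x^2\circ x^2+2x\circ(x\circ x^2)=0$. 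Since $x\circ x^2=0$, it follows that $x^2\circ x^2=0$. Therefore all monomials of degree $\ge3$ vanish.

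The only step needing care is this degree-four case $x^2\circ x^2$. It is the one product that is not formally a product involving a vanishing cubic factor, and the choice of substitution matters: one must use \eqref{deltaMN1} with $c=x^2$, together with $\delta\neq0$. Everything else reduces to Lemma~\ref{lem:lift} and the cubic identities already obtained in Lemma~\ref{lem16}.
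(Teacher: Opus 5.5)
Your proof is correct and follows essentially the paper's route: Lemma~\ref{lem:lift} makes $\mathcal{A}$ generated by one element $x$, and then $x^2\circ x=x\circ x^2=x^2\circ x^2=0$. The only difference is how $x^2\circ x^2$ is killed. The paper uses \eqref{deltaMN2} at $(a,b,c)=(x,x,x^2)$, which gives $x^2\circ x^2=-(x\circ x^2)\circ x=0$ with no division by $\delta$, so your remark that one \emph{must} use \eqref{deltaMN1} at that step is inaccurate, though your choice also works.
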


\begin{proof}
By Lemma~\ref{lem:lift}, $\mathcal{A}$ is generated by one element $x$.
Identity~\eqref{deltaMN2}, with all variables equal to $x$, gives
$(x^2)x=0$.  Identity~\eqref{deltaMN1} then gives $x(x^2)=0$.
Substituting $(x,y,z)=(x,x,x^2)$ into~\eqref{deltaMN2}, we obtain
\[
 (x^2)(x^2)+(x(x^2))x=0,
\]
so $(x^2)^2=0$.  Thus $\Span\{x,x^2\}$ is a subalgebra containing
$x$.  Since $\mathcal{A}$ is generated by $x$, one has
$\mathcal{A}=\Span\{x,x^2\}$.
\end{proof}

\begin{theorem}\label{thm:two-step}
Let $\mathcal{A}$ be a complex algebra with $\dim \mathcal{A}\leq4$.  Then $\mathcal{A}$ is a
$\delta$-mock-Novikov algebra if and only if
\begin{equation*}
\mathcal{A}^2\subseteq\Ann(\mathcal{A}). 
\end{equation*}
\end{theorem}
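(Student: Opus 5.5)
We prove the two implications separately. Throughout, ``$\mathcal{A}^2\subseteq\Ann(\mathcal{A})$'' is equivalent to $\mathcal{A}^3=0$.

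\emph{The easy direction.} Assume $\mathcal{A}^2\subseteq\Ann(\mathcal{A})$. Then every product of three elements, in either bracketing, vanishes. Both \eqref{deltaMN1} and \eqref{deltaMN2} consist only of such triple products, so they hold trivially, for any $\delta$ and in any dimension.

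\emph{The converse: reduction by nilpotency.} Let $\mathcal{A}$ be a $\delta$-mock-Novikov algebra with $\dim\mathcal{A}\le 4$. By Theorem~\ref{thm:nilpotent}, $\mathcal{A}$ is nilpotent. This gives $\mathcal{A}^{n+1}\subsetneq\mathcal{A}^n$ whenever $\mathcal{A}^n\neq0$, since otherwise $\mathcal{A}^n=\mathcal{A}^m$ for all $m\ge n$, forcing $\mathcal{A}^n=0$. Put $k=\dim(\mathcal{A}/\mathcal{A}^2)$ and split into cases.
\begin{itemize}
\item If $k=\dim\mathcal{A}$, then $\mathcal{A}^2=0$ and there is nothing to prove.
\item If $k=1$, Lemma~\ref{lem:D3} gives $\mathcal{A}=\Span\{x,x^2\}$. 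Its proof shows $(x^2)x=x(x^2)=(x^2)^2=0$, so $\mathcal{A}^3=0$.
\item If $\dim\mathcal{A}^2=1$, strict descent gives $\mathcal{A}^3\subsetneq\mathcal{A}^2$, hence $\mathcal{A}^3=0$. This covers $\dim\mathcal{A}=3$ with $k=2$, and $\dim\mathcal{A}=4$ with $k=3$.
\end{itemize}
The only remaining case is $\dim\mathcal{A}=4$, $k=2$, $\dim\mathcal{A}^2=2$. This is the main obstacle, and we argue by contradiction.

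\emph{The remaining case.} Suppose $\mathcal{A}^3\neq0$. Strict descent forces $\dim\mathcal{A}^3=1$, say $\mathcal{A}^3=\mathbb C w$, and then $\mathcal{A}^4=0$, so $w\in\Ann(\mathcal{A})$. Write $\mathcal{A}^2=\Span\{u,w\}$ and choose a complement $\mathcal{V}=\Span\{x,y\}$ of $\mathcal{A}^2$. The relevant products then have the form
\[
a\circ b=\lambda(a,b)\,u+\mu(a,b)\,w,\qquad u\circ a=f(a)\,w,\qquad a\circ u=g(a)\,w,\qquad a,b\in\mathcal{V},
\]
where $\lambda,\mu$ are bilinear forms and $f,g$ are linear forms on $\mathcal{V}$. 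The form $\lambda$ is nonzero, since otherwise $\mathcal{A}^2=\mathcal{A}\circ\mathcal{A}\subseteq\mathbb C w+\mathcal{A}^3=\mathbb C w$, contradicting $\dim\mathcal{A}^2=2$. Since $\mathcal{A}^4=0$ and $w$ is annihilating, $\mathcal{A}^3$ is spanned by the elements $u\circ a$ and $a\circ u$ for $a\in\mathcal{V}$.

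\emph{Showing $f=0$.} Apply \eqref{deltaMN2} to $a,b,c\in\mathcal{V}$; the $w$-parts of $a\circ b$ and $a\circ c$ die against $\mathcal{A}$. This gives
\[
\lambda(a,b)f(c)+\lambda(a,c)f(b)=0.
\]
Choose $a$ with $\ell:=\lambda(a,-)\neq0$. Then $\ell\otimes f+f\otimes\ell=0$ as a bilinear form. In characteristic zero a symmetric product of two linear forms vanishes only if one factor is zero, so $f=0$.

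\emph{Showing $g=0$.} Apply \eqref{deltaMN1} to $a,b,c\in\mathcal{V}$. The $\delta$-terms involve $u\circ c$, which is zero since $f=0$. What remains is
\[
\lambda(b,c)g(a)+\lambda(a,c)g(b)=0.
\]
Choose $c$ with $m:=\lambda(-,c)\neq0$. The same argument gives $g=0$.

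\emph{Conclusion.} With $f=g=0$, all generators $u\circ a$ and $a\circ u$ of $\mathcal{A}^3$ vanish, so $\mathcal{A}^3=0$. This contradicts $\mathcal{A}^3=\mathbb C w\neq0$. Hence $\mathcal{A}^3=0$ in every case, i.e.\ $\mathcal{A}^2\subseteq\Ann(\mathcal{A})$.
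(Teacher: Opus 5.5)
Your case split, the use of Lemma~\ref{lem:D3}, and the computations giving $f=0$ and $g=0$ are correct and match the paper's; your $\lambda,f,g$ are its $B,q,p$. The gap is the ``strict descent'' claim. It is false for $n\ge3$, and you use it at $n=3$ to get $\mathcal{A}^4=0$. From $\mathcal{A}^n=\mathcal{A}^{n+1}$ you only get $\mathcal{A}^{n+2}\supseteq\mathcal{A}\mathcal{A}^{n}+\mathcal{A}^{n}\mathcal{A}$. This misses the inner summands $\mathcal{A}^i\mathcal{A}^{n+1-i}$, $2\le i\le n-1$, of $\mathcal{A}^{n+1}$, so the equality need not propagate. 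Your argument is valid only for $n=1,2$, so the case $\dim\mathcal{A}^2=1$ and the bound $\dim\mathcal{A}^3\le1$ are fine.

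Here is a counterexample in exactly your remaining configuration. On $\Span\{x,y,u,w\}$ put $x\circ x=u$ and $u\circ u=w$, with all other products zero. This algebra is nilpotent, with $\mathcal{A}^2=\Span\{u,w\}$, $\mathcal{A}^3=\mathcal{A}^4=\mathbb{C}w$ and $\mathcal{A}^5=0$. With $\mathcal{V}=\Span\{x,y\}$ one has $\lambda\neq0$ and $f=g=0$, yet $\mathcal{A}^3\neq0$. So your final step is incomplete. $\mathcal{A}^3$ is spanned by $u\circ a$, $a\circ u$, and also $u\circ u\in\mathcal{A}^2\mathcal{A}^2$, and you discarded $u\circ u$ only by invoking the unproved $\mathcal{A}^4=0$.

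Two repairs are needed.

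First, you must justify $w\in\Ann(\mathcal{A})$ without using $\mathcal{A}^4=0$. You already rely on it in the $f$ and $g$ computations, where you use $w\circ c=0$ and $a\circ w=0$. The line $\mathcal{A}^3$ is stable under every $L_a,R_a$, since $\mathcal{A}\mathcal{A}^3+\mathcal{A}^3\mathcal{A}\subseteq\mathcal{A}^4\subseteq\mathcal{A}^3$. The multiplication algebra is nilpotent by Lemma~\ref{lem:mult}, so it acts by zero on this line. This is the paper's argument with $\mathcal{E}=\mathcal{M}(\mathcal{A})\mathcal{D}$.

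Second, you must derive $u\circ u=0$ from the defining identities. Apply \eqref{deltaMN2} to $(a,b,u)$ with $a,b\in\mathcal{V}$. This gives $\lambda(a,b)\,u\circ u+g(a)\,w\circ b=\lambda(a,b)\,u\circ u=0$, so $u\circ u=0$ because $\lambda\neq0$. This is the paper's step $r=0$; in the counterexample above, exactly this identity fails at $(x,x,u)$. With these two additions your argument becomes the paper's proof.
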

\begin{proof}
Suppose first that $\mathcal{A}$ is a $\delta$-mock-Novikov algebra and put
$\mathcal{D}=\mathcal{A}^2$.  By Theorem~\ref{thm:nilpotent}, $\mathcal{A}$ is nilpotent.  The
proof of that theorem also shows that the restriction to $\mathcal{D}$ of the
multiplication algebra $\mathcal{M}(\mathcal{A})$ is nilpotent.
Since $\mathcal{A}^2\neq \mathcal{A}$ unless $\mathcal{A}=0$, one has $\dim \mathcal{D}\neq4$.  If
$\dim \mathcal{D}=3$, then $\dim(\mathcal{A}/\mathcal{A}^2)=1$, contradicting
Lemma~\ref{lem:D3}.  Thus $\dim \mathcal{D}\in\{0,1,2\}$.

If $\dim \mathcal{D}=0$, there is nothing to prove.  

If $\dim \mathcal{D}=1$, every
nilpotent endomorphism of $\mathcal{D}$ is zero.  Hence
$\mathcal{M}(\mathcal{A})\mathcal{D}=0$, and therefore $\mathcal{A}\mathcal{D}=\mathcal{D}\mathcal{A}=0$.

Assume that $\dim \mathcal{D}=2$.  Then necessarily $\dim \mathcal{A}=4$.  Put
$ \mathcal{E}=\mathcal{M}(\mathcal{A})\mathcal{D}=\mathcal{A}\mathcal{D}+\mathcal{D}\mathcal{A}.$
Since $\mathcal{M}(\mathcal{A})|_{\mathcal{D}}$ is nilpotent, $\mathcal{E}$ is a proper subspace of
$\mathcal{D}$.  Suppose that $\mathcal{E}\neq0$.  Then $\dim \mathcal{E}=1$,
$\mathcal{M}(\mathcal{A})\mathcal{E}\subseteq \mathcal{E}$, and nilpotency gives
$\mathcal{M}(\mathcal{A})\mathcal{E}=0$.  Write
$ \mathcal{D}=\C u\oplus\C z,
 \;\mathcal{E}=\C z,$
so that $z\in\Ann(\mathcal{A})$.  Choose a two-dimensional complement $\mathcal{V}$ of
$\mathcal{D}$.  There exist bilinear forms $B,C:\mathcal{V}\times \mathcal{V}\to\C$, linear forms
$p,q:\mathcal{V}\to\C$, and $r\in\C$ such that
\[
\begin{aligned}
 vw&=B(v,w)u+C(v,w)z,\\
 vu&=p(v)z,\qquad uv=q(v)z,\qquad u^2=rz.
\end{aligned}
\]
Here $B\neq0$, since otherwise $\mathcal{A}^2\subseteq\C z$.
Identity~\eqref{deltaMN2}, applied to $v,w,t\in \mathcal{V}$, gives
\[
 B(v,w)q(t)+B(v,t)q(w)=0.
\]
If $q(w_0)\neq0$, setting $w=t=w_0$ first gives
$B(v,w_0)=0$, and then fixing $t=w_0$ gives $B=0$, a
contradiction.  Hence $q=0$.  Identity~\eqref{deltaMN1} now gives
\[
 B(w,t)p(v)+B(v,t)p(w)=0,
\]
and the same argument yields $p=0$.  Finally,
\eqref{deltaMN2} applied to $(v,w,u)$ gives $B(v,w)r=0$, so $r=0$.
Thus $\mathcal{A}\mathcal{D}=\mathcal{D}\mathcal{A}=0$, contradicting $\mathcal{E}\neq0$.  Consequently $\mathcal{E}=0$, and
$\mathcal{D}\subseteq\Ann(\mathcal{A})$.

Conversely, if $\mathcal{A}^2\subseteq\Ann(\mathcal{A})$, then every triple product,
with either bracketing, is zero.  Hence both
\eqref{deltaMN1} and~\eqref{deltaMN2} hold identically.
\end{proof}

Theorems \ref{thm:two-step} and ~\ref{prop:two-step-special} yield the following corollary.
\begin{corollary}
Every complex $\delta$-mock-Novikov algebra of dimension at most
four is differentially special.
\end{corollary}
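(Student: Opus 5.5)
This corollary follows by composing the two cited results. Let $\mathcal{A}$ be a complex $\delta$-mock-Novikov algebra with $\dim\mathcal{A}\leq 4$. We apply the forward direction of Theorem~\ref{thm:two-step} to conclude that $\mathcal{A}^2\subseteq\Ann(\mathcal{A})$. We then feed this inclusion into Theorem~\ref{prop:two-step-special}, which yields a $\delta$-differential anti-commutative anti-associative algebra $(\mathcal{B},\ast,\varphi)$ and an injective homomorphism $\mathcal{A}\to\mathcal{B}_\varphi$. By definition, this means $\mathcal{A}$ is differentially special.

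Only one point needs checking: the hypotheses must match. Theorem~\ref{prop:two-step-special} is stated for an arbitrary $\delta$-mock-Novikov algebra satisfying $\mathcal{A}^2\subseteq\Ann(\mathcal{A})$, with no dimension restriction. Its construction of $\mathcal{B}=\mathcal{V}\oplus\mathcal{V}'\oplus\bigwedge^2\mathcal{V}\oplus\mathcal{Z}$ uses only a vector-space complement $\mathcal{V}$ of $\mathcal{A}^2$, which always exists. The parameter $\delta\in\C^\times$ is the same in both theorems, so the $\delta$-derivation $\varphi$ constructed there is compatible with the given $\delta$.

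There is no genuine obstacle. All the substantive work is already done in Theorem~\ref{thm:two-step}, which itself relies on the nilpotency result of Theorem~\ref{thm:nilpotent} and on Lemma~\ref{lem:D3}. The only care needed is to use the correct implication of Theorem~\ref{thm:two-step}, namely that $\delta$-mock-Novikov implies $\mathcal{A}^2\subseteq\Ann(\mathcal{A})$, rather than its converse.
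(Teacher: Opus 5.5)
Your proposal is correct and is exactly the paper's argument: the forward direction of Theorem~\ref{thm:two-step} gives $\mathcal{A}^2\subseteq\Ann(\mathcal{A})$, and Theorem~\ref{prop:two-step-special}, which has no dimension restriction, then yields differential speciality. Your check that the hypotheses and the parameter $\delta$ line up between the two theorems is the only additional point, and it holds.
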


\subsection{Classification in dimensions at most four}

Since a $\delta$-mock-Novikov algebra carries no additional structure
beyond its multiplication, its isomorphism relation is the usual one
for nonassociative algebras. By Theorem~\ref{thm:two-step}, the
classification of complex $\delta$-mock-Novikov algebras of dimension
at most four therefore coincides with that of complex two-step
nilpotent algebras in these dimensions.

The latter classification is known; see
\cite{KaygorodovKhrypchenkoLopes2022,KPPV2018,KPPVCorr2022}.
We retain the original notation and multiplication tables.
All products not displayed are zero.

The classification of complex nilpotent algebras of dimensions two
and three is recalled in
\cite[Sections~1.2.2--1.2.3]{KaygorodovKhrypchenkoLopes2022}.
In the notation of that paper, the algebras satisfying
$\mathcal{A}^3=0$ are precisely the trivial CD-algebras listed below.

\begingroup
\small
\setlength{\tabcolsep}{3pt}
\begin{longtable}{@{}C{0.10\textwidth}C{0.22\textwidth}
L{0.48\textwidth}L{0.12\textwidth}@{}}
\caption{Complex $\delta$-mock-Novikov algebras of dimension at most three}\\
\toprule
$\dim \mathcal{A}$ & Algebra & Nonzero products & Conditions\\
\midrule
\endfirsthead
\multicolumn{4}{c}{Table \thetable\ (continued)}\\
\toprule
$\dim \mathcal{A}$ & Algebra & Nonzero products & Conditions\\
\midrule
\endhead
\bottomrule
\endlastfoot

$1$ & $\mathbf 0_1$ & none & --\\

$2$ & $\mathbf 0_2$ & none & --\\

$2$ & $\mathrm{CD}^{2*}_{01}$ &
$e_1e_1=e_2$ & --\\

$3$ & $\mathbf 0_3$ & none & --\\

$3$ & $\mathrm{CD}^{3*}_{01}$ &
$e_1e_1=e_2$ & --\\

$3$ & $\mathrm{CD}^{3*}_{02}$ &
$e_1e_1=e_3,\quad e_2e_2=e_3$ & --\\

$3$ & $\mathrm{CD}^{3*}_{03}$ &
$e_1e_2=e_3,\quad e_2e_1=-e_3$ & --\\

$3$ & $\mathrm{CD}^{3*}_{04}(\lambda)$ &
$e_1e_1=\lambda e_3,\quad
 e_2e_1=e_3,\quad
 e_2e_2=e_3$ &
$\lambda\in\C$\\

\end{longtable}
\endgroup
For dimension four, we extract the algebras with $\mathcal{A}^3=0$ from
\cite[Table~1]{KPPV2018}, incorporating the corrections in
\cite{KPPVCorr2022}. This yields the following classification of
four-dimensional complex $\delta$-mock-Novikov algebras.
\begingroup
\small
\setlength{\tabcolsep}{3pt}
\begin{longtable}{@{}C{0.18\textwidth}L{0.65\textwidth}
L{0.10\textwidth}@{}}
\caption{Four-dimensional complex $\delta$-mock-Novikov algebras}\\
\toprule
Algebra & Nonzero products & Parameters\\
\midrule
\endfirsthead
\multicolumn{3}{c}{Table \thetable\ (continued)}\\
\toprule
Algebra & Nonzero products & Parameters\\
\midrule
\endhead
\bottomrule
\endlastfoot

$\mathbf 0_4$ & none & --\\
$\mathfrak N_{1}^{\C^2}$ &
$e_1e_1=e_2$ &
--\\

$\mathfrak N_{1}^{2}$ &
$e_1e_1=e_2,\quad e_3e_3=e_4$ &
--\\

$\mathfrak N_{1}^{\C}$ &
$e_1e_2=e_3,\quad e_2e_1=-e_3$ &
--\\

$\mathfrak N_{2}^{\C}(\beta)$ &
$e_1e_1=e_3,\quad
 e_1e_2=e_3,\quad
 e_2e_2=\beta e_3$ &
$\beta\in\C$\\

$\mathfrak N_{3}^{\C}$ &
$e_1e_1=e_3,\quad
 e_1e_2=e_3,\quad
 e_2e_1=e_3$ &
--\\

$\mathfrak N_{1}$ &
$e_1e_2=e_3,\quad
 e_2e_1=e_4,\quad
 e_2e_2=-e_3$ &
--\\

$\mathfrak N_{2}(\gamma)$ &
$e_1e_1=e_3,\quad
 e_1e_2=e_4,\quad
 e_2e_1=-\gamma e_3,\quad
 e_2e_2=-e_4$ &
$\gamma\in\C$\\

$\mathfrak N_{3}(\alpha)$ &
$e_1e_1=e_4,\quad
 e_1e_2=\alpha e_4,\quad
 e_2e_1=-\alpha e_4,\quad
 e_2e_2=e_4,\quad
 e_3e_3=e_4$ &
$\alpha\in\C$\\

$\mathfrak N_{4}$ &
$e_1e_2=e_4,\quad
 e_1e_3=e_4,\quad
 e_2e_1=-e_4,\quad
 e_2e_2=e_4,\quad
 e_3e_1=e_4$ &
--\\

$\mathfrak N_{5}$ &
$e_1e_1=e_4,\quad
 e_1e_2=e_4,\quad
 e_2e_1=-e_4,\quad
 e_3e_3=e_4$ &
--\\

$\mathfrak N_{6}$ &
$e_1e_2=e_3,\quad
 e_2e_1=e_4$ &
--\\

$\mathfrak N_{7}$ &
$e_1e_1=e_4,\quad
 e_1e_2=e_3,\quad
 e_2e_1=-e_3,\quad
 e_2e_2=2e_3+e_4$ &
--\\

$\mathfrak N_{9}(\alpha)$ &
$e_1e_2=e_4,\quad
 e_2e_1=\alpha e_4,\quad
 e_2e_2=e_3$ &
$\alpha\in\C$\\

$\mathfrak N_{10}$ &
$e_1e_2=e_4,\quad
 e_2e_1=-e_4,\quad
 e_3e_3=e_4$ &
--\\

$\mathfrak N_{0}$ &
$e_1e_2=e_4,\quad
 e_3e_1=e_4$ &
--\\
\end{longtable}
\endgroup

The following example shows that in
dimension five, a $\delta$-mock-Novikov algebra need no longer
satisfy $\mathcal{A}^2\subseteq\Ann(\mathcal{A})$.

\begin{example}
Let
$\mathcal{A}=\operatorname{span}_{\mathbb C}
\{e_1,e_2,e_3,e_4,e_5\}$
and define the multiplication by
\[
\begin{aligned}
e_1e_1&=e_3,\qquad&
e_1e_2&=e_4,\qquad&
e_3e_2&=e_5,\\
e_4e_1&=-e_5,\qquad&
e_1e_4&=-\delta e_5,\qquad&
e_2e_3&=\delta e_5,
\end{aligned}
\]
with all other products of basis elements equal to zero. Then $\mathcal{A}$ is
a $\delta$-mock-Novikov algebra. Moreover,
$\mathcal{A}^3\ne0$,
so $\mathcal{A}$ is not two-step nilpotent.
\end{example}
\section{Poisson-type structures associated with \texorpdfstring{$\delta$}{delta}-mock-Novikov algebras}\label{sec6}
In this section, we study several Poisson-type structures associated with
$\delta$-mock-Novikov algebras, including
$\delta$-mock-Novikov-Poisson, $\delta$-mock-Poisson, transposed
$\delta$-mock-Poisson, and $\delta$-mock-Gelfand-Dorfman algebras,
together with the relations among them. 
We then investigate their
polarization and depolarization, as well as the tensor-product
constructions connecting the mock and non-mock settings.
\subsection{Constructions and correspondences}

We begin with the mock analogue of
$\delta$-Novikov-Poisson algebras. Accordingly, the commutative
associative multiplication is replaced by an anti-commutative
anti-associative one, while the Novikov multiplication is replaced by
a $\delta$-mock-Novikov multiplication.
\begin{definition}
A triple $(\mathcal{A},\ast,\circ)$ is called a
\emph{$\delta$-mock-Novikov-Poisson algebra} if $(\mathcal{A},\ast)$ is an
anti-commutative anti-associative algebra, $(\mathcal{A},\circ)$ is a
$\delta$-mock-Novikov algebra, and
\begin{align}
(x\ast y)\circ z+x\ast(y\circ z)&=0,
\label{id: NP  ass}\\
\delta(x\circ y)\ast z+\delta(y\circ x)\ast z
+x\circ(y\ast z)+y\circ(x\ast z)&=0
\label{mockNP}
\end{align}
for all $x,y,z\in \mathcal{A}$.
\end{definition}

The differential construction considered in Section \ref{sec2}
provides a natural source of such algebras. Namely, if
$(\mathcal{A},\ast,\varphi)$ is a $\delta$-differential anti-commutative
anti-associative algebra, then the multiplication
$x\circ y=x\ast\varphi(y)$ makes $(\mathcal{A},\ast,\circ)$ a
$\delta$-mock-Novikov-Poisson algebra.

We next relate the two operations of a
$\delta$-mock-Novikov-Poisson algebra to a single
$\delta$-mock-Novikov multiplication. The following result shows that
they form a compatible structure.
\begin{proposition}\label{prop:depolar-mNP}
Let $(\mathcal{A},\ast,\circ)$ be a
$\delta$-mock-Novikov-Poisson algebra. For any scalars
$\alpha,\beta\in\mathbb C$, define
\[
x\cdot_{\alpha,\beta}y
=
\alpha\,x\circ y+\beta\,x\ast y.
\]
Then $(\mathcal{A},\cdot_{\alpha,\beta})$ is a
$\delta$-mock-Novikov algebra. In particular,
$x\cdot y=x\circ y+x\ast y$
is called the \emph{depolarization} of
$(\mathcal{A},\ast,\circ)$.
\end{proposition}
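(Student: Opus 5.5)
The plan is to expand both defining identities \eqref{deltaMN1} and \eqref{deltaMN2} for the product $x\cdot y=\alpha\,x\circ y+\beta\,x\ast y$. Each identity is quadratic in the multiplication, so its left-hand side is a sum of an $\alpha^2$-part, a $\beta^2$-part and an $\alpha\beta$-part, and I will show that each part vanishes separately.

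The $\alpha^2$-parts are exactly \eqref{deltaMN1} and \eqref{deltaMN2} for $\circ$, so they vanish because $(\mathcal{A},\circ)$ is $\delta$-mock-Novikov. The $\beta^2$-parts are the same identities for $\ast$. Since $(\mathcal{A},\ast)$ is anti-commutative and anti-associative, Proposition~\ref{prop:anti-mN} shows it is $\delta$-mock-Novikov, so these parts vanish as well.

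The mixed terms need more care, and there I will use \eqref{id: NP  ass} together with the anti-commutativity of $\ast$. First I derive the auxiliary relation
\[
(x\circ y)\ast z=-z\ast(x\circ y)=(z\ast x)\circ y=-(x\ast z)\circ y .
\]
The first step is anti-commutativity, the second is \eqref{id: NP  ass} applied to $(z,x,y)$, and the third is anti-commutativity again. The mixed part of \eqref{deltaMN2} is
\[
(x\circ y)\ast z+(x\ast y)\circ z+(x\circ z)\ast y+(x\ast z)\circ y .
\]
By the auxiliary relation, the first term cancels the fourth, and the third term cancels the second (using the relation with $y,z$ interchanged).

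The mixed part of \eqref{deltaMN1} is
\[
\delta\bigl[(x\circ y)\ast z+(y\circ x)\ast z+(x\ast y)\circ z+(y\ast x)\circ z\bigr]
+x\circ(y\ast z)+y\circ(x\ast z)+x\ast(y\circ z)+y\ast(x\circ z).
\]
I will group it into three pieces:
\begin{itemize}
\item[(a)] $\delta(x\circ y)\ast z+\delta(y\circ x)\ast z+x\circ(y\ast z)+y\circ(x\ast z)$, which is exactly \eqref{mockNP} and so vanishes;
\item[(b)] $\delta\bigl[(x\ast y)\circ z+(y\ast x)\circ z\bigr]$, which vanishes by anti-commutativity of $\ast$;
\item[(c)] $x\ast(y\circ z)+y\ast(x\circ z)$, which by \eqref{id: NP  ass} equals $-(x\ast y)\circ z-(y\ast x)\circ z=0$, again by anti-commutativity.
\end{itemize}

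The only real obstacle is finding the right pairing of the mixed terms, in particular the auxiliary relation above. Once that is in hand, everything is a direct substitution, and the depolarization statement is the special case $\alpha=\beta=1$.
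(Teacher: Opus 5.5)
Your proof is correct and follows the same route as the paper. You split each identity into its $\alpha^2$-, $\beta^2$- and $\alpha\beta$-parts, handle the pure parts by the hypothesis on $\circ$ and by Proposition~\ref{prop:anti-mN} for $\ast$, and cancel the mixed parts using the two compatibility identities and anti-commutativity of $\ast$. You also supply the explicit cancellations (your auxiliary relation and the grouping (a)--(c)) that the paper only summarizes as ``a direct calculation,'' which shows that anti-associativity of $\ast$ is not needed for the mixed terms.
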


\begin{proof}
By Proposition~\ref{prop:anti-mN},
$(\mathcal{A},\ast)$ is itself a $\delta$-mock-Novikov algebra.
Hence the pure $\alpha^2$- and $\beta^2$-terms in the defining
identities vanish. 

It remains only to check the mixed terms. Using
\eqref{id: NP  ass}, \eqref{mockNP}, their permutations, and the
anti-commutativity and anti-associativity of $\ast$, a direct
calculation shows that the mixed terms in both
\eqref{deltaMN2} and \eqref{deltaMN1} vanish. Hence
$(\mathcal{A},\cdot_{\alpha,\beta})$ is a
$\delta$-mock-Novikov algebra.
\end{proof}

As an immediate consequence, the depolarized multiplication remains
compatible with the original anti-commutative anti-associative
multiplication.
\begin{corollary}\label{cor:pencil-mNP}
Let $(\mathcal{A},\ast,\circ)$ be a
$\delta$-mock-Novikov-Poisson algebra. For any scalars
$\alpha,\beta\in\mathbb C$, define
\[
x\times_{\alpha,\beta}y
=
\alpha\,x\circ y+\beta\,x\ast y.
\]
Then
$(\mathcal{A},\ast,\times_{\alpha,\beta})$
is a $\delta$-mock-Novikov-Poisson algebra.
\end{corollary}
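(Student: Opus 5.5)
We must check that $(\mathcal{A},\ast,\times_{\alpha,\beta})$ is again a $\delta$-mock-Novikov-Poisson algebra. Three things need verifying:
\begin{itemize}
\item $(\mathcal{A},\ast)$ is anti-commutative anti-associative, which is unchanged;
\item $(\mathcal{A},\times_{\alpha,\beta})$ is a $\delta$-mock-Novikov algebra;
\item the two compatibility identities \eqref{id: NP  ass} and \eqref{mockNP} hold with $\circ$ replaced by $\times_{\alpha,\beta}$.
\end{itemize}
The second item is exactly Proposition~\ref{prop:depolar-mNP}, since $\times_{\alpha,\beta}=\cdot_{\alpha,\beta}$. So only the compatibility identities need checking.

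Both compatibility identities are linear in the second operation. Substituting $x\times_{\alpha,\beta}y=\alpha\,x\circ y+\beta\,x\ast y$ therefore splits each into an $\alpha$-part and a $\beta$-part. The $\alpha$-part is $\alpha$ times the original identity \eqref{id: NP  ass} or \eqref{mockNP}, which vanishes by hypothesis. It remains to show that the $\beta$-parts vanish, namely
\[
(x\ast y)\ast z+x\ast(y\ast z)=0,
\]
\[
\delta(x\ast y)\ast z+\delta(y\ast x)\ast z+x\ast(y\ast z)+y\ast(x\ast z)=0 .
\]

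The first of these is anti-associativity of $\ast$. For the second, anti-commutativity gives $(x\ast y)\ast z+(y\ast x)\ast z=0$. Anti-associativity and anti-commutativity together give
\[
y\ast(x\ast z)=-(y\ast x)\ast z=(x\ast y)\ast z=-x\ast(y\ast z),
\]
so $x\ast(y\ast z)+y\ast(x\ast z)=0$. This is precisely \eqref{deltaMN1} for $\ast$, which was already noted via Proposition~\ref{prop:anti-mN}.

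There is no real obstacle here. The only point needing care is to notice that the compatibility identities are linear in the second operation, so no mixed terms arise. The nontrivial mixed-term computation is entirely absorbed by Proposition~\ref{prop:depolar-mNP}.
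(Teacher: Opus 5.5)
Your proof is correct and follows the paper's route: the paper gives no separate argument and treats the corollary as an immediate consequence of Proposition~\ref{prop:depolar-mNP}. The compatibility identities \eqref{id: NP  ass} and \eqref{mockNP} then follow, exactly as you say, from their linearity in the second operation together with the anti-commutativity and anti-associativity of $\ast$.
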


It is therefore natural to ask for the converse: when does the
canonical polarization of a $\delta$-mock-Novikov multiplication
recover a $\delta$-mock-Novikov-Poisson structure? The following proposition gives a necessary and sufficient condition.

\begin{proposition}\label{pol-mNP}
Let $(\mathcal{A},\cdot)$ be a $\delta$-mock-Novikov algebra over a field of
characteristic different from $2$. For all $x,y,z\in \mathcal{A}$, define
\[
x\ast y=\frac{xy-yx}{2},
\qquad
x\circ y=\frac{xy+yx}{2}.
\]
Then $(\mathcal{A},\ast,\circ)$ is a $\delta$-mock-Novikov-Poisson algebra if
and only if
\begin{equation}\label{pol-eq}
x(yz)-z(yx)-(1+\delta)(xz)y+(1-\delta)(zx)y=0.
\end{equation}

\end{proposition}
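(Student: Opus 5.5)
The plan is to reduce everything to arity-three computations in the quotient $\mathcal F(E)(3)/\mathcal R_\delta=\mathcal M_\delta(3)$, which has dimension $6$ by Proposition~\ref{prop:delta-mN-arity-three}. Every identity that $(\mathcal{A},\ast,\circ)$ must satisfy is multilinear of degree three in $x,y,z$. After substituting $x\ast y=\tfrac12(xy-yx)$ and $x\circ y=\tfrac12(xy+yx)$, each one becomes a linear combination of the twelve monomials $L_\sigma,T_\sigma$ in the product $\cdot$. Since $(\mathcal{A},\cdot)$ is $\delta$-mock-Novikov, such an identity holds in $\mathcal{A}$ as soon as its image in $\mathcal M_\delta(3)$ vanishes. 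Conversely, the identities are equivalent to the vanishing of these images whenever the argument is carried out for the free $\delta$-mock-Novikov algebra; I will phrase the equivalence at that level.

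\textbf{Normal form.} The relation $r_2$ lets us rewrite $(x\,z)y$ as $-(x\,y)z$. The relation $r_{1,\delta}$ expresses $(yx)z$ through $(xy)z$ and the right-nested terms $x(yz)$, $y(xz)$. So I will fix a basis of six monomials of $\mathcal M_\delta(3)$, for instance the six right-nested $T_\sigma$, provided their images are independent. If they are not, I will instead take $(xy)z$, $(yz)x$, $(zx)y$ together with suitable $T_\sigma$'s, so that every monomial has an explicit reduction.

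\textbf{Listing the conditions.} Next I list what has to be checked.
\begin{enumerate}[(a)]
\item Anti-commutativity of $\ast$ is automatic.
\item Anti-associativity of $\ast$.
\item The identities \eqref{deltaMN1} and \eqref{deltaMN2} for the symmetric product $\circ$.
\item The compatibilities \eqref{id: NP  ass} and \eqref{mockNP}.
\end{enumerate}
Each of these, together with its $S_3$-translates, gives a finite set of vectors in $\mathcal M_\delta(3)\cong\mathbb C^6$. Separately I reduce the left-hand side $P(x,y,z)$ of \eqref{pol-eq} and its $S_3$-translates to the same normal form.

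\textbf{Main step.} The claim to prove is that the span $U$ of the images of all required identities coincides with the span $W$ of the $S_3$-orbit of $P$. For ``if'', I show that each required identity, once normalised, is an explicit linear combination of permutations of $P$. For ``only if'', I exhibit $P$ itself as a combination of normalised required identities. I expect $P$ to arise essentially from \eqref{id: NP  ass}, possibly combined with a permutation of anti-associativity of $\ast$, since \eqref{id: NP  ass} already mixes $(xz)y$ and $(zx)y$ with $x(yz)$ and $z(yx)$.

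The main obstacle is the bookkeeping in this reduction rather than any conceptual point. The identities for $\circ$ include the strong constraint that a commutative $\delta$-mock-Novikov algebra has $\mathcal{A}^3=0$ for $\circ$, and verifying that this lies in $W$ is the delicate case. I would handle it by computing the dimension of $W$ by rank and checking that $U\subseteq W$ and $P\in U$ directly. Characteristic $\neq 2$ is needed only to divide by $2$ in the definitions of $\ast$ and $\circ$ and to invert the factor $\tfrac14$ in the quadratic expressions.
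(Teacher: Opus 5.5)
Your plan is sound and would prove the statement, but it is a purely computational route, different from the paper's, and as written the decisive step ($U\subseteq W$ and $\bar P\in U$) is only announced. You do not need the free algebra to pass to a given $\mathcal A$. All conditions are multilinear of degree three, and every $\delta$-mock-Novikov algebra satisfies all of $\mathcal R_\delta$, so these two inclusions already give the equivalence in every $\mathcal A$.

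Two of your expectations need correcting. First, the six $T_\sigma$ are not a basis of $\mathcal M_\delta(3)$, because
\[
\begin{aligned}
\sum_{\sigma\in S_3}T_\sigma
&=r_{1,\delta}(x,y,z)+r_{1,\delta}(x,z,y)+r_{1,\delta}(y,z,x)\\
&\quad-\delta\bigl(r_2(x,y,z)+r_2(y,x,z)+r_2(z,x,y)\bigr)\in\mathcal R_\delta .
\end{aligned}
\]
So your fallback is forced, and $(xy)z,(yz)x,(zx)y,x(yz),x(zy),y(zx)$ is indeed a basis. Second, $P$ needs no anti-associativity. In $\mathcal F(E)(3)$ one has exactly
\[
4\bigl((x\ast y)\circ z+x\ast(y\circ z)\bigr)=P(x,y,z)+r_2(x,y,z)-r_2(y,x,z)-r_2(z,x,y)+r_{1,\delta}(x,z,y).
\]
This is the paper's first step: \eqref{pol-eq} is equivalent to the single compatibility $(x\ast y)\circ z+x\ast(y\circ z)=0$.

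The paper then replaces your rank computation with a structural argument.
\begin{enumerate}[(1)]
\item Under that compatibility, $B(x,y,z)=(x\ast y)\circ z=-x\ast(y\circ z)$ is antisymmetric in $x,y$ and symmetric in $y,z$, hence $B=0$.
\item So all four mixed products vanish, and the second compatibility condition holds trivially.
\item The polarized \eqref{deltaMN1}, and \eqref{deltaMN2} taken at $(z,x,y)$, then differ by $2\delta(x\circ y)\circ z$. This settles your ``delicate case''.
\item What remains of \eqref{deltaMN2} is the alternating property of $(x\ast y)\ast z$, i.e.\ anti-associativity.
\end{enumerate}
This explains why $U=W$ and identifies it: $\dim W=5$, and $\mathcal M_\delta(3)/W$ is spanned by the alternating product $(x\ast y)\ast z$. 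Your method would confirm this only mechanically.

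There is also a field issue. The proposition is stated over an arbitrary field of characteristic $\neq2$, while Section~3 works over $\mathbb C$. You should therefore exhibit explicit combinations with coefficients in $\mathbb Z[\tfrac12,\delta^{\pm1}]$ rather than rely on a rank. Note too that $2$ is inverted not only in defining $\ast$ and $\circ$, but also in killing $B$ and $(x\circ y)\circ z$.
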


\begin{proof}
For all $x,y,z\in \mathcal{A}$, we have
\[
\begin{aligned}
4\big((x\ast y)\circ z+x\ast(y\circ z)\big)
={}&(xy)z-(yx)z+z(xy)-z(yx)+x(yz)+x(zy)-(yz)x-(zy)x\\
\stackrel{\eqref{deltaMN2}}{=}&-(xz)y+z(xy)-z(yx)
+x(yz)+x(zy)+(zx)y\\
\stackrel{\eqref{deltaMN1}}{=}&x(yz)-z(yx)
-(1+\delta)(xz)y+(1-\delta)(zx)y,
\end{aligned}
\]
where in the last equality \eqref{deltaMN1} is applied to $(x,z,y)$.
Thus \eqref{pol-eq} is equivalent to \eqref{id: NP  ass}.

The necessity is immediate. Conversely, assume that \eqref{pol-eq}
holds. Then \eqref{id: NP  ass} holds. Put
$B=(x\ast y)\circ z$. From \eqref{id: NP  ass} and its permutations
we obtain both $z\ast(x\circ y)=B$ and
$z\ast(x\circ y)=-B$. Hence $B=0$, and therefore
\begin{equation}\label{zero}
(x\ast y)\circ z=x\ast(y\circ z)
=(x\circ y)\ast z=x\circ(y\ast z)=0.
\end{equation}

Since $xy=x\ast y+x\circ y$, identities \eqref{deltaMN2} and
\eqref{zero} give
\begin{equation}\label{eqq1}
(x\ast y)\ast z+(x\ast z)\ast y
+(x\circ y)\circ z+(x\circ z)\circ y=0,
\end{equation}
whereas \eqref{deltaMN1} gives
\begin{equation}\label{eqq2}
2\delta(x\circ y)\circ z
+x\ast(y\ast z)+y\ast(x\ast z)
+x\circ(y\circ z)+y\circ(x\circ z)=0.
\end{equation}
Applying \eqref{eqq1} to $(z,x,y)$ and using the
anti-commutativity of $\ast$ and the commutativity of $\circ$, we get
\[
x\ast(y\ast z)+y\ast(x\ast z)
+x\circ(y\circ z)+y\circ(x\circ z)=0.
\]
Together with \eqref{eqq2} and $\delta\neq0$, this implies
$(x\circ y)\circ z=0$, and hence also $x\circ(y\circ z)=0$.
Thus $(\mathcal{A},\circ)$ is a $\delta$-mock-Novikov algebra.

Now \eqref{eqq1} reduces to
$(x\ast y)\ast z+(x\ast z)\ast y=0$. Interchanging $x$ and $y$
and using anti-commutativity yields
\[
(x\ast y)\ast z+x\ast(y\ast z)=0,
\]
so $(\mathcal{A},\ast)$ is anti-associative. Finally,
\eqref{id: NP  ass} holds by assumption and \eqref{mockNP} follows
from \eqref{zero}. Hence $(\mathcal{A},\ast,\circ)$ is a
$\delta$-mock-Novikov-Poisson algebra.
\end{proof}

Recall that the Kantor product of two bilinear multiplications
$A$ and $B$, with respect to a fixed element $u$, is defined by
\begin{equation}\label{Kantor-product}
K_u(A,B)(x,y)
=
A(u,B(x,y))
-B(A(u,x),y)
-B(x,A(u,y)).
\end{equation}
For $\delta$-Novikov-Poisson algebras, the Kantor product of the
associative and Novikov multiplications is again a
$\delta$-Novikov multiplication; see~\cite[Theorem~26]{K}.
The following result gives the corresponding statement in the
mock setting.

\begin{theorem}\label{thm:Kantor-mNP}
Let $(\mathcal{A},\ast,\circ)$ be a
$\delta$-mock-Novikov-Poisson algebra, and fix $u\in\mathcal{A}$.
Let $\kappa_u=K_u(\ast,\circ)$ be the Kantor product
with respect to $u$ defined by~\eqref{Kantor-product}.
Then $(\mathcal{A},\kappa_u)$ is a
$\delta$-mock-Novikov algebra. Moreover,
\[
\kappa_u\bigl(\kappa_u(x,y),z\bigr)=0
\]
for all $x,y,z\in\mathcal{A}$.
\end{theorem}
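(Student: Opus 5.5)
The plan is to first put $\kappa_u$ into a normal form in which $u$ always appears as the outermost left $\ast$-factor, and then to show that every iterated $\kappa_u$-product contains $u$ twice inside an alternating $\ast$-expression. Expanding \eqref{Kantor-product}, we have
\[
\kappa_u(x,y)=u\ast(x\circ y)-(u\ast x)\circ y-x\circ(u\ast y).
\]
By \eqref{id: NP  ass}, $(u\ast x)\circ y=-u\ast(x\circ y)$, so
\[
\kappa_u(x,y)=2\,u\ast(x\circ y)-x\circ(u\ast y).
\]
The remaining term $x\circ(u\ast y)$ cannot be moved with \eqref{id: NP  ass} alone. I will therefore keep it, and at each later stage symmetrize in the appropriate pair of variables so that \eqref{mockNP} can be used: it turns $x\circ(y\ast z)+y\circ(x\ast z)$ into $-\delta\bigl((x\circ y)+(y\circ x)\bigr)\ast z$.

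The ingredients I will use are the following. Anti-commutativity gives $u\ast u=0$ (characteristic $\neq2$). By Lemma~\ref{lem:acaa}(iii), $(a\ast b)\ast c$ is alternating and $\ast$-products of length four vanish, so any $\ast$-monomial in which $u$ occurs twice, after all $\circ$'s have been pushed inside, is zero. Here I use \eqref{id: NP  ass} in the form $(a\ast b)\circ c=-a\ast(b\circ c)$ together with the identities \eqref{zero}-type consequences. The point to establish is that $\circ$ can always be pushed inward past a left $\ast$-factor.

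I will then compute $\kappa_u(\kappa_u(x,y),z)$ term by term.
\begin{itemize}
\item For $\kappa_u(u\ast w,z)$ with $w=x\circ y$, one gets $2u\ast((u\ast w)\circ z)-(u\ast w)\circ(u\ast z)$. Both terms reduce via \eqref{id: NP  ass} to $\pm u\ast(u\ast\cdots)=\mp(u\ast u)\ast\cdots=0$.
\item For $\kappa_u(x\circ(u\ast y),z)$, I expect to need \eqref{mockNP} and the $\delta$-mock-Novikov identities for $\circ$ to rewrite $x\circ(u\ast y)$ modulo terms of the form $u\ast(\cdot)$ and terms already killed.
\end{itemize}
This gives the second assertion, $\kappa_u(\kappa_u(x,y),z)=0$.

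Once that is done, \eqref{deltaMN2} for $\kappa_u$ is immediate, because every left-nested product vanishes. In \eqref{deltaMN1}, the $\delta$-terms also vanish, so it remains to show
\[
\kappa_u(x,\kappa_u(y,z))+\kappa_u(y,\kappa_u(x,z))=0 .
\]
Here I expand and group the terms symmetric in $x,y$. The pieces $x\circ(u\ast w)+y\circ(u\ast w')$ are handled by \eqref{mockNP}, which converts them to $\ast$-products with $u$ on the outside, and then the double occurrence of $u$ kills them as above. The pieces $u\ast(x\circ(u\ast\cdots))$ already contain two $u$'s and are killed by the same argument.

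The main obstacle is the middle step: controlling $x\circ(u\ast y)$ when it is not symmetrized. Specifically, I need to show that $\kappa_u(x\circ(u\ast y),z)$ vanishes. My first attempt will be to apply \eqref{mockNP} to the triple $(x,u,y)$, which expresses $x\circ(u\ast y)$ as $-u\circ(x\ast y)-\delta\bigl((x\circ u)+(u\circ x)\bigr)\ast y$. I then feed this into $\kappa_u(-,z)$ and use \eqref{id: NP  ass} repeatedly, expecting every surviving term to contain $u$ twice inside an alternating trilinear $\ast$-expression.
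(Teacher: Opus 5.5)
Your overall architecture matches the paper's. You put $\kappa_u$ in the normal form $\kappa_u(x,y)=2u\ast(x\circ y)-x\circ(u\ast y)$, prove $\kappa_u(\kappa_u(x,y),z)=0$ first, and then obtain \eqref{deltaMN2} for $\kappa_u$ and reduce \eqref{deltaMN1} to $\kappa_u(x,\kappa_u(y,z))+\kappa_u(y,\kappa_u(x,z))=0$. The gap is the vanishing mechanism. Identity \eqref{id: NP  ass} moves a $\circ$ only across a \emph{left} $\ast$-factor. It cannot act on $p\circ(q\ast r)$, nor on a product whose outer operation is $\circ$ with a $\ast$-product on the right. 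Using only $u\ast(u\ast v)=0$ and \eqref{id: NP  ass}, one gets exactly
\[
\kappa_u(\kappa_u(x,y),z)=-2u\ast\bigl((x\circ(u\ast y))\circ z\bigr)+2u\ast\bigl((x\circ y)\circ(u\ast z)\bigr)+(x\circ(u\ast y))\circ(u\ast z).
\]
In the first two terms the second $u$ is buried inside an inner $\circ$, so they are not of the form $u\ast(u\ast\cdots)$. The second term is $2\bigl(-(u\ast w)\circ(u\ast z)\bigr)$ from your first bullet, so that bullet's justification is wrong. For the third term, substituting \eqref{mockNP} at $(x,u,y)$, as you propose, gives
\[
-(u\circ(x\ast y))\circ(u\ast z)-\delta\bigl((x\circ u+u\circ x)\ast y\bigr)\circ(u\ast z).
\]
Both pieces again have outer $\circ$ and a $\ast$-product on the right, and no alternating $\ast$-trilinear containing both copies of $u$ ever appears. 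You also cannot fall back on \eqref{zero}. It was derived only for the polarized structure of Proposition~\ref{pol-mNP}, and it fails in general: for $x\circ y=x\ast\varphi(y)$ one has $(x\ast y)\circ z=(x\ast y)\ast\varphi(z)$, which is typically nonzero.

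Two ingredients are missing. The first is the identity
\[
a\ast\bigl((b\circ c)\circ d\bigr)=0=\bigl((a\circ c)\ast b\bigr)\circ d .
\]
It follows from $(a\circ c)\ast b=a\ast(b\circ c)$, which comes from \eqref{id: NP  ass} and anticommutativity. Compute $((a\circ c)\ast b)\circ d$ once as $-a\ast((b\circ c)\circ d)$, and once as $-(a\circ c)\ast(b\circ d)=-a\ast((b\circ d)\circ c)=a\ast((b\circ c)\circ d)$ by \eqref{deltaMN2}; comparing gives the identity. It kills the first two terms of the expansion and the $\delta$-term of the second display.

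The second ingredient handles $-(u\circ(x\ast y))\circ(u\ast z)$. Use \eqref{deltaMN2} to rewrite it as $(u\circ(u\ast z))\circ(x\ast y)$, then \eqref{mockNP} at $(u,u,z)$, which turns it into $-\delta((u\circ u)\ast z)\circ(x\ast y)=0$. This is exactly what the paper's expression of $2\kappa_u(\kappa_u(x,y),z)$ through \eqref{mockNP} at $(u,x,y)$ and at $(u,u,z)$ encodes.

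Your pre-Lie step has the same problem. The sum $x\circ(u\ast w)+y\circ(u\ast w')$ with $w\neq w'$ is not an instance of \eqref{mockNP}, which symmetrizes $a\circ(b\ast c)+b\circ(a\ast c)$ with a common $c$. The paper instead groups the expansion into four genuine instances of \eqref{mockNP}. Alternatively, first show $(a\ast b)\ast(c\circ d)=0$ by computing $((a\ast b)\ast c)\circ d$ in two ways; with \eqref{mockNP} this gives $u\ast(p\circ(u\ast q))=0$. Hence $\kappa_u(x,\kappa_u(y,z))=2(u\ast x)\circ(y\circ(u\ast z))$, and its symmetrization in $x,y$ vanishes by \eqref{deltaMN1} at $(x,y,u\ast z)$ together with the displayed identity.
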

\begin{proof}
By~\eqref{id: NP  ass},
we get 
$(u\ast x)\circ y=-u\ast(x\circ y),$
and hence
\begin{equation}\label{eq:Kantor-simplified}
\kappa_u(x,y)
=
2u\ast(x\circ y)-x\circ(u\ast y).
\end{equation}

We first show that
$\kappa_u\bigl(\kappa_u(x,y),z\bigr)=0.$
Expanding by~\eqref{eq:Kantor-simplified} and using
anti-associativity, \eqref{id: NP  ass}, and
\eqref{deltaMN2}, we obtain
\[
\begin{aligned}
2\kappa_u\bigl(\kappa_u(x,y),z\bigr)
={}&
2\Bigl(
\delta(u\circ x)\ast y
+\delta(x\circ u)\ast y
+u\circ(x\ast y)
+x\circ(u\ast y)
\Bigr)\circ(u\ast z)\\
&+
\Bigl(
2\delta(u\circ u)\ast z
+2u\circ(u\ast z)
\Bigr)\circ(x\ast y).
\end{aligned}
\]
The first parenthesis vanishes by~\eqref{mockNP} applied to
$(u,x,y)$, while the second vanishes by~\eqref{mockNP} applied to
$(u,u,z)$. Since the characteristic is different from $2$,
$\kappa_u\bigl(\kappa_u(x,y),z\bigr)=0.$
Therefore
\[
\kappa_u\bigl(\kappa_u(x,y),z\bigr)
+
\kappa_u\bigl(\kappa_u(x,z),y\bigr)=0,
\]
so $\kappa_u$ satisfies the right-anti-commutative identity.

It remains to verify the left $\delta$-mock-pre-Lie identity.
In view of the preceding identity, it suffices to prove
\[
\kappa_u\bigl(x,\kappa_u(y,z)\bigr)
+
\kappa_u\bigl(y,\kappa_u(x,z)\bigr)=0.
\]
A direct expansion using~\eqref{eq:Kantor-simplified}, followed by
applications of
\eqref{deltaMN1}, \eqref{deltaMN2}, and
\eqref{id: NP  ass}, gives
\[
\begin{aligned}
&\kappa_u\bigl(x,\kappa_u(y,z)\bigr)
+\kappa_u\bigl(y,\kappa_u(x,z)\bigr)\\
={}&\delta\Bigl[
\delta(u\circ y)\ast\bigl(x\circ(u\ast z)\bigr)
+\delta(y\circ u)\ast\bigl(x\circ(u\ast z)\bigr)
+u\circ\bigl(y\ast(x\circ(u\ast z))\bigr)
+y\circ\bigl(u\ast(x\circ(u\ast z))\bigr)\\
&+\delta\bigl(y\circ(x\circ(u\ast z))\bigr)\ast u
+\delta\bigl((x\circ(u\ast z))\circ y\bigr)\ast u
+y\circ\bigl((x\circ(u\ast z))\ast u\bigr)
+(x\circ(u\ast z))\circ(y\ast u)\\
&-2\delta\bigl(u\circ(y\circ(u\ast z))\bigr)\ast x
-2\delta\bigl((y\circ(u\ast z))\circ u\bigr)\ast x
-2u\circ\bigl((y\circ(u\ast z))\ast x\bigr)
-2(y\circ(u\ast z))\circ(u\ast x)\\
&-\delta\bigl(u\circ(x\circ(u\ast z))\bigr)\ast y
-\delta\bigl((x\circ(u\ast z))\circ u\bigr)\ast y
-u\circ\bigl((x\circ(u\ast z))\ast y\bigr)
-(x\circ(u\ast z))\circ(u\ast y)
\Bigr].
\end{aligned}
\]
The four groups of terms above vanish, respectively, by
\eqref{mockNP} applied to
\[
(u,y,x\circ(u\ast z)),\qquad
(y,x\circ(u\ast z),u),\qquad
(u,y\circ(u\ast z),x),\qquad
(u,x\circ(u\ast z),y).
\]
Therefore
\[
\kappa_u\bigl(x,\kappa_u(y,z)\bigr)
+\kappa_u\bigl(y,\kappa_u(x,z)\bigr)=0.
\]
Since
\[
\kappa_u\bigl(\kappa_u(a,b),c\bigr)=0
\qquad(a,b,c\in\mathcal A),
\]
the left $\delta$-mock-pre-Lie identity follows immediately.
In conjunction with the right anti-commutative identity established above,
this proves that $(\mathcal A,\kappa_u)$ is a
$\delta$-mock-Novikov algebra.
\end{proof}

We next introduce mock analogues of Poisson and transposed Poisson
algebras, with the main aim of relating the latter to
$\delta$-mock-Novikov-Poisson algebras. 

\begin{definition}
Let $(\mathcal{A},\ast,\{-,-\})$ be a vector space such that
$(\mathcal{A},\ast)$ is an anti-commutative anti-associative algebra
and $(\mathcal{A},\{-,-\})$ is a mock-Lie algebra. It is called a
\emph{$\delta$-mock-Poisson algebra} if
\begin{equation}\label{deltaMP}
\{x\ast y,z\}
=
-\delta\bigl(x\ast\{y,z\}+\{x,z\}\ast y\bigr),
\end{equation}
and a \emph{transposed $\delta$-mock-Poisson algebra} if
\begin{equation}\label{deltaTMP}
-2\delta\,x\ast\{y,z\}
=
\{x\ast y,z\}+\{y,x\ast z\},
\end{equation}
for all $x,y,z\in\mathcal{A}$.
\end{definition}

For $\delta=1$, every mock-Poisson algebra is, in particular, an
anti-Poisson algebra~\cite{R22}.

A natural class of $\delta$-mock-Poisson algebras can be obtained
from commuting $\delta$-derivations, in parallel with the classical
Poisson construction.

\begin{example}
Let $(\mathcal{A},\ast)$ be an anti-commutative anti-associative algebra, and
let $D_1,\ldots,D_n$ be pairwise commuting $\delta$-derivations of
$\mathcal{A}$. Define
\[
\{x,y\}
=
\sum_{1\leq i<j\leq n}
\bigl(D_i(x)\ast D_j(y)+D_i(y)\ast D_j(x)\bigr).
\]
Then $(\mathcal{A},\{-,-\})$ is a mock-Lie algebra and
$(\mathcal{A},\ast,\{-,-\})$ is a $\delta$-mock-Poisson algebra.
\end{example}

The relevance of the transposed notion is illustrated by the following
result, which provides the mock counterpart of the classical
connection between Novikov-Poisson and transposed Poisson
structures.
\begin{theorem}\label{thm:mNP-to-tMP}
Let $(\mathcal{A},\ast,\circ)$ be a
$(2\delta-1)$-mock-Novikov-Poisson algebra. Define
\[
\{x,y\}=x\circ y+y\circ x.
\]
Then $(\mathcal{A},\ast,\{-,-\})$ is a transposed
$\delta$-mock-Poisson algebra.
\end{theorem}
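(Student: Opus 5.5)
The statement splits into two parts. First, each of $(\mathcal{A},\ast)$ and $(\mathcal{A},\{-,-\})$ must have the required type. Second, the compatibility identity \eqref{deltaTMP} must hold. I will handle them in that order.

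For the individual structures: $(\mathcal{A},\ast)$ is anti-commutative and anti-associative by the definition of a $(2\delta-1)$-mock-Novikov-Poisson algebra. The product $\circ$ is a $(2\delta-1)$-mock-Novikov multiplication, so Proposition~\ref{bdnov}, applied with parameter $2\delta-1$ in place of $\delta$, shows that $\{x,y\}=x\circ y+y\circ x$ is a mock-Lie bracket. Proposition~\ref{bdnov} holds for any nonzero parameter. If $2\delta-1=0$, I would instead check the mock-Jacobi identity directly from \eqref{deltaMN2} and the degenerate form of \eqref{deltaMN1}, following the same argument.

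For the compatibility identity \eqref{deltaTMP}, I will expand the right-hand side into four terms:
\[
\{x\ast y,z\}+\{y,x\ast z\}
=(x\ast y)\circ z+(x\ast z)\circ y+z\circ(x\ast y)+y\circ(x\ast z).
\]
For the first two terms I apply \eqref{id: NP  ass}. This gives
\[
(x\ast y)\circ z+(x\ast z)\circ y=-x\ast(y\circ z)-x\ast(z\circ y)=-x\ast\{y,z\}.
\]
For the last two terms I first use anti-commutativity of $\ast$ to write them as
\[
-\bigl(z\circ(y\ast x)+y\circ(z\ast x)\bigr).
\]
Then I apply \eqref{mockNP}, with parameter $2\delta-1$, to the triple $(z,y,x)$. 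This rewrites the expression as
\[
(2\delta-1)\bigl((z\circ y)\ast x+(y\circ z)\ast x\bigr)=(2\delta-1)\{y,z\}\ast x=-(2\delta-1)\,x\ast\{y,z\},
\]
where the last step again uses anti-commutativity of $\ast$.

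Adding the two contributions gives $-(1+2\delta-1)\,x\ast\{y,z\}=-2\delta\,x\ast\{y,z\}$, which is exactly \eqref{deltaTMP}. The computation is short. The only real care needed is in the middle step: the terms $z\circ(x\ast y)$ and $y\circ(x\ast z)$ must first be rewritten with $x$ in the last slot, so that \eqref{mockNP} applies to them as a pair. Once that is done, the parameter shift $\delta\mapsto2\delta-1$ is precisely what turns the coefficient $1+(2\delta-1)$ into $2\delta$.
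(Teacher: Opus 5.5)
Your proof is correct and follows the paper's argument essentially verbatim: Proposition~\ref{bdnov} gives the mock-Lie bracket, then \eqref{id: NP  ass} handles $(x\ast y)\circ z+(x\ast z)\circ y$ and \eqref{mockNP} with parameter $2\delta-1$ handles the other two terms. Applying \eqref{mockNP} to $(z,y,x)$ rather than the paper's $(y,z,x)$ makes no difference, since that identity is symmetric in its first two arguments; your caveat for $2\delta-1=0$ is also harmless, because the proof of Proposition~\ref{bdnov} never uses that the parameter is nonzero.
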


\begin{proof}
Since $(\mathcal{A},\circ)$ is a $(2\delta-1)$-mock-Novikov algebra,
Proposition~\ref{bdnov} shows that $(\mathcal{A},\{-,-\})$ is a mock-Lie
algebra. It remains to verify \eqref{deltaTMP}.

Put $\lambda=2\delta-1$. By \eqref{id: NP  ass},
\[
(x\ast y)\circ z=-x\ast(y\circ z),
\qquad
(x\ast z)\circ y=-x\ast(z\circ y).
\]
Moreover, applying \eqref{mockNP} with parameter $\lambda$ to
$(y,z,x)$ gives
\[
z\circ(x\ast y)+y\circ(x\ast z)
=-\lambda\,x\ast\{y,z\}.
\]
Therefore,
\[
\begin{aligned}
\{x\ast y,z\}+\{y,x\ast z\}
={}&(x\ast y)\circ z+z\circ(x\ast y)+y\circ(x\ast z)+(x\ast z)\circ y\\
\stackrel{\eqref{id: NP  ass}}{=}{}&-(1+\lambda)x\ast\{y,z\}\\
={}&-2\delta\,x\ast\{y,z\}.
\end{aligned}
\]
Thus \eqref{deltaTMP} holds.
\end{proof}

Theorem~\ref{thm:mNP-to-tMP}, together with the differential
construction above, yields the following class of transposed
$\delta$-mock-Poisson algebras.

\begin{corollary}
Let $(\mathcal{A},\ast)$ be an anti-commutative anti-associative algebra and
let $D$ be a $(2\delta-1)$-derivation of $\mathcal{A}$. Define
\[
\{x,y\}
=
x\ast D(y)+y\ast D(x).
\]
Then $(\mathcal{A},\ast,\{-,-\})$ is a transposed
$\delta$-mock-Poisson algebra.
\end{corollary}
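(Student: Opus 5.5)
The corollary should follow by combining the differential construction with Theorem~\ref{thm:mNP-to-tMP}. Put $\lambda=2\delta-1$ and $x\circ y=x\ast D(y)$. Then $\{x,y\}=x\circ y+y\circ x$ is exactly the bracket in the statement. It therefore suffices to show that $(\mathcal{A},\ast,\circ)$ is a $\lambda$-mock-Novikov-Poisson algebra and then invoke Theorem~\ref{thm:mNP-to-tMP}.

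\textbf{Step 1.} $(\mathcal{A},\circ)$ is a $\lambda$-mock-Novikov algebra. The triple $(\mathcal{A},\ast,D)$ is a $\lambda$-differential anti-commutative anti-associative algebra, so this is Proposition~\ref{prop:diff-acaa-mN} with $\delta$ replaced by $\lambda$. One caveat: the standing assumption $\delta\in\mathbb C^\times$ does not cover $\lambda=0$, i.e. $\delta=\tfrac12$. I expect the proof of Proposition~\ref{prop:diff-acaa-mN} to go through unchanged there, since it never divides by the parameter. The identity $\lambda(x\circ y)\circ z+x\circ(y\circ z)=\lambda\,x\ast(y\ast D^2(z))$ still holds when $\lambda=0$, because then $D(y\ast D(z))=0$. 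I would note this explicitly, or restrict to $\lambda\neq0$ if necessary.

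\textbf{Step 2.} Verify \eqref{id: NP  ass}. By anti-associativity,
\[
(x\ast y)\circ z=(x\ast y)\ast D(z)=-x\ast(y\ast D(z))=-x\ast(y\circ z).
\]

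\textbf{Step 3.} Verify \eqref{mockNP} with parameter $\lambda$. This is the one genuinely computational point, though I expect it to be short. First,
\[
\lambda(x\circ y)\ast z=\lambda(x\ast D(y))\ast z.
\]
Next, by the $\lambda$-derivation rule,
\[
x\circ(y\ast z)=x\ast D(y\ast z)=\lambda\,x\ast(D(y)\ast z)+\lambda\,x\ast(y\ast D(z)).
\]
Apply anti-associativity to turn $(x\ast D(y))\ast z$ into $-x\ast(D(y)\ast z)$. The terms $\lambda(x\circ y)\ast z$ and $\lambda\,x\ast(D(y)\ast z)$ then cancel, leaving
\[
\lambda(x\circ y)\ast z+x\circ(y\ast z)=\lambda\,x\ast(y\ast D(z)).
\]
Symmetrize this in $x,y$. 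The result is $\lambda\bigl(x\ast(y\ast w)+y\ast(x\ast w)\bigr)$ with $w=D(z)$. This vanishes because the map $T$ of Lemma~\ref{lem:acaa}(iii) is alternating: $x\ast(y\ast w)=-T(x,y,w)$, and $T$ is antisymmetric in its first two arguments. This step is the same computation as the second half of the proof of Proposition~\ref{prop:diff-acaa-mN}, with one factor $\circ$ replaced by $\ast$.

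\textbf{Step 4.} With Steps 1–3, $(\mathcal{A},\ast,\circ)$ is a $(2\delta-1)$-mock-Novikov-Poisson algebra. This is precisely the remark made after the definition of $\delta$-mock-Novikov-Poisson algebras. Theorem~\ref{thm:mNP-to-tMP} then gives that $(\mathcal{A},\ast,\{-,-\})$ is a transposed $\delta$-mock-Poisson algebra.
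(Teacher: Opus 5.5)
Your proposal is correct and follows the same route as the paper. Set $x\circ y=x\ast D(y)$, observe that $(\mathcal{A},\ast,\circ)$ is a $(2\delta-1)$-mock-Novikov-Poisson algebra, and apply Theorem~\ref{thm:mNP-to-tMP}. The paper only cites the differential construction for the middle step, whereas you correctly verify both compatibility identities explicitly and correctly observe that nothing divides by the parameter when $2\delta-1=0$.
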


\begin{proof}
Define $x\circ y=x\ast D(y)$. By the differential construction for
mock-Novikov-Poisson algebras, $(\mathcal{A},\ast,\circ)$ is a
$(2\delta-1)$-mock-Novikov-Poisson algebra. The conclusion now
follows from Theorem~\ref{thm:mNP-to-tMP}.
\end{proof}

We next turn to Gelfand-Dorfman type structures. A basic construction
associates a Gelfand-Dorfman algebra with every Novikov algebra by
taking the commutator
\[
[x,y]=x\circ y-y\circ x;
\]
see, for example,~\cite{GelDorf79}. Moreover, Sartayev~\cite{SartayevTP}
showed that transposed Poisson algebras coincide with
Gelfand-Dorfman algebras with commutative Novikov multiplication.
These two connections motivate the following mock analogue.

\begin{definition}
A triple $(\mathcal{A},\circ,\{-,-\})$ is called a
\emph{$\delta$-mock-Gelfand-Dorfman algebra} if $(\mathcal{A},\circ)$ is a
$\delta$-mock-Novikov algebra, $(\mathcal{A},\{-,-\})$ is a mock-Lie algebra,
and
\begin{equation}\label{id:GD}
\{x,y\circ z\}+\{z,y\circ x\}
+\delta\{y,x\}\circ z+\delta\{y,z\}\circ x
+y\circ\{x,z\}=0
\end{equation}
for all $x,y,z\in \mathcal{A}$.
\end{definition}

The classical commutator construction for Novikov algebras is replaced
in the mock setting by symmetrization.

\begin{proposition}\label{prop:mN-to-mGD}
Let $(\mathcal{A},\circ)$ be a $\delta$-mock-Novikov algebra. Define
\[
\{x,y\}=x\circ y+y\circ x.
\]
Then $(\mathcal{A},\circ,\{-,-\})$ is a
$\delta$-mock-Gelfand-Dorfman algebra.
\end{proposition}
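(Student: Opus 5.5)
We first check that $(\mathcal{A},\{-,-\})$ is a mock-Lie algebra, and then verify the compatibility identity \eqref{id:GD} by expanding it and regrouping the terms into instances of the defining identities. Since $(\mathcal{A},\circ)$ is a $\delta$-mock-Novikov algebra by hypothesis, Proposition~\ref{bdnov} already gives the mock-Lie part. Thus only \eqref{id:GD} needs proof.

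Substituting $\{a,b\}=a\circ b+b\circ a$ into the five terms of \eqref{id:GD} produces ten cubic monomials:
\[
\begin{aligned}
&x\circ(y\circ z)+(y\circ z)\circ x+z\circ(y\circ x)+(y\circ x)\circ z
+\delta(y\circ x)\circ z+\delta(x\circ y)\circ z\\
&\quad+\delta(y\circ z)\circ x+\delta(z\circ y)\circ x
+y\circ(x\circ z)+y\circ(z\circ x).
\end{aligned}
\]
We split these ten monomials into three groups, each of which is a defining relation of $\mathcal{A}$.
\begin{itemize}
\item[] \emph{First group.} The terms $\delta(x\circ y)\circ z+\delta(y\circ x)\circ z+x\circ(y\circ z)+y\circ(x\circ z)$ form exactly the left-hand side of \eqref{deltaMN1} evaluated at $(x,y,z)$, so they vanish.
\item[] \emph{Second group.} The terms $\delta(z\circ y)\circ x+\delta(y\circ z)\circ x+z\circ(y\circ x)+y\circ(z\circ x)$ form the left-hand side of \eqref{deltaMN1} evaluated at $(z,y,x)$, so they vanish as well.
\item[] \emph{Third group.} The two remaining terms $(y\circ z)\circ x+(y\circ x)\circ z$ are the left-hand side of \eqref{deltaMN2} evaluated at $(y,z,x)$, and hence vanish.
\end{itemize}
It follows that \eqref{id:GD} holds identically, which completes the argument.

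The only step that requires any care is the bookkeeping. One must check that each of the ten monomials is used exactly once across the three groups. One must also check that the $\delta$-coefficients are correctly attached to the left-nested products $(\,\cdot\circ\cdot\,)\circ\cdot$, as \eqref{deltaMN1} requires. No deeper obstacle is expected.
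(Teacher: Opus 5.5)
Your proof is correct and follows the paper's argument: you invoke Proposition~\ref{bdnov} for the mock-Lie part, then expand \eqref{id:GD} into ten monomials and split them into two instances of \eqref{deltaMN1} and one instance of \eqref{deltaMN2} at $(y,z,x)$. Your second group, \eqref{deltaMN1} at $(z,y,x)$, is the same grouping the paper uses, since \eqref{deltaMN1} is symmetric in its first two arguments.
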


\begin{proof}
By Proposition~\ref{bdnov}, $(\mathcal{A},\{-,-\})$ is a mock-Lie algebra.
Expanding the left-hand side of \eqref{id:GD} gives
\[
\begin{aligned}
&\{x,y\circ z\}+\{z,y\circ x\}
+\delta\{y,x\}\circ z+\delta\{y,z\}\circ x
+y\circ\{x,z\}\\
={}&
\Big(
\delta(x\circ y)\circ z+\delta(y\circ x)\circ z
+x\circ(y\circ z)+y\circ(x\circ z)
\Big)\\
&+
\Big(
\delta(y\circ z)\circ x+\delta(z\circ y)\circ x
+y\circ(z\circ x)+z\circ(y\circ x)
\Big)\\
&+(y\circ z)\circ x+(y\circ x)\circ z.
\end{aligned}
\]
The first two terms vanish by \eqref{deltaMN1}, while the last term
vanishes by \eqref{deltaMN2}. Hence \eqref{id:GD} holds.
\end{proof}

We next establish the mock counterpart of the classical equality
between commutative Gelfand-Dorfman algebras and transposed Poisson
algebras. 

For later use, set $\lambda=2\delta-1$ and define
\begin{align*}
\mathrm{GD}_{\lambda}(x,y,z)
={}&\{x,y\ast z\}+\{z,y\ast x\}
+\lambda\{y,x\}\ast z
+\lambda\{y,z\}\ast x+y\ast\{x,z\},
\\
\mathcal T_{\delta}(x,y,z)
={}&2\delta\,x\ast\{y,z\}
+\{x\ast y,z\}+\{y,x\ast z\}.
\end{align*}

\begin{theorem}\label{thm:mGD-tMP}
Let $\delta\in\mathbb C^\times$ and set $\lambda=2\delta-1$.
Assume that $\lambda\neq0$.
\begin{enumerate}[\rm (i)]
\item Every transposed $\delta$-mock-Poisson algebra
$(\mathcal{A},\ast,\{-,-\})$ is an anti-commutative
$\lambda$-mock-Gelfand-Dorfman algebra.

\item If $\delta\neq\frac34$, then every anti-commutative
$\lambda$-mock-Gelfand-Dorfman algebra
$(\mathcal{A},\ast,\{-,-\})$ is a transposed
$\delta$-mock-Poisson algebra.
\end{enumerate}
Consequently, for
$\delta\notin\{\frac12,\frac34\}$, the varieties of transposed
$\delta$-mock-Poisson algebras and anti-commutative
$(2\delta-1)$-mock-Gelfand-Dorfman algebras coincide.
\end{theorem}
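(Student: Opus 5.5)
The plan is to reduce both implications to one linear identity relating $\mathrm{GD}_\lambda$ and $\mathcal T_\delta$, modulo the cyclic sum
\[
S(x,y,z)=x\ast\{y,z\}+y\ast\{z,x\}+z\ast\{x,y\}.
\]
First I check that the ambient axioms agree. An anti-commutative $\lambda$-mock-Novikov multiplication $\ast$ is exactly an anti-commutative anti-associative one, by Proposition~\ref{prop:anti-mN} (this uses $\lambda\neq0$). In both definitions $\{-,-\}$ is a mock-Lie bracket. So the two varieties differ only in the compatibility conditions $\mathcal T_\delta=0$ and $\mathrm{GD}_\lambda=0$.

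\textbf{Key identity.} I will rewrite $\mathrm{GD}_\lambda$ using the symmetry of $\{-,-\}$ and the anti-commutativity of $\ast$. This gives $\lambda\{y,x\}\ast z=-\lambda z\ast\{x,y\}$ and $\lambda\{y,z\}\ast x=-\lambda x\ast\{y,z\}$. Next, the definition of $\mathcal T_\delta(y,x,z)$ yields
\[
\{x,y\ast z\}+\{z,y\ast x\}=\mathcal T_\delta(y,x,z)-2\delta\,y\ast\{x,z\}.
\]
Since $1-2\delta=-\lambda$, the leftover term is $-\lambda\,y\ast\{z,x\}$, and I expect
\[
\mathrm{GD}_\lambda(x,y,z)=\mathcal T_\delta(y,x,z)-\lambda\,S(x,y,z).
\]
So each direction amounts to showing that $S$ vanishes under the corresponding hypothesis.

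\textbf{(i).} Sum $\mathcal T_\delta$ over cyclic permutations of $(x,y,z)$. The terms $\{x\ast y,z\}$ and $\{y,x\ast z\}$ cancel in pairs after using anti-commutativity of $\ast$ and symmetry of the bracket. For example, $\{y,x\ast z\}=-\{z\ast x,y\}$ cancels the cyclic translate $\{z\ast x,y\}$. Hence the cyclic sum is $2\delta S$. If $\mathcal T_\delta=0$, then $2\delta S=0$, so $S=0$ because $\delta\neq0$. The key identity then gives $\mathrm{GD}_\lambda=0$.

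\textbf{(ii).} Sum $\mathrm{GD}_\lambda$ cyclically, treating the three kinds of terms separately.
\begin{enumerate}[\rm (a)]
\item The six terms $\{x,y\ast z\}+\{z,y\ast x\}$ and their cyclic translates cancel in pairs.
\item The terms $\lambda\{y,x\}\ast z+\lambda\{y,z\}\ast x$ contribute $-2\lambda S$ in total.
\item The terms $y\ast\{x,z\}$ contribute $S$.
\end{enumerate}
Thus $\mathrm{GD}_\lambda=0$ forces $(1-2\lambda)S=(3-4\delta)S=0$. So $S=0$ when $\delta\neq\tfrac34$, and the key identity yields $\mathcal T_\delta(y,x,z)=0$ for all $x,y,z$. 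The final statement about the two varieties coinciding then follows by combining (i) and (ii).

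The main obstacle is bookkeeping rather than any idea: tracking the signs in the cyclic cancellations and in the conversion $\{u,v\}\ast w=-w\ast\{u,v\}$. The exceptional value $\delta=\tfrac34$ comes precisely from the coefficient $1-2\lambda$ in (ii).
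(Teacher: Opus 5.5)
Your proposal is correct and follows essentially the paper's argument. Part (i) is identical: the cyclic sum of $\mathcal T_\delta$ gives $2\delta S=0$. Your key identity $\mathrm{GD}_\lambda(x,y,z)=\mathcal T_\delta(y,x,z)-\lambda S(x,y,z)$ is the paper's expression for $\mathrm{GD}_\lambda$ after substituting $\mathcal T_\delta(x,y,z)+\mathcal T_\delta(y,x,z)+\mathcal T_\delta(z,y,x)=2\delta S$. For (ii), the paper's identity $(3-4\delta)\mathcal T_\delta(x,y,z)=\lambda\,\mathrm{GD}_\lambda(z,y,x)+\lambda\,\mathrm{GD}_\lambda(y,z,x)+2(1-\delta)\,\mathrm{GD}_\lambda(z,x,y)$ is exactly your cyclic sum $\sum_{\rm cyc}\mathrm{GD}_\lambda=(3-4\delta)S$ combined with your key identity, so your route simply makes the origin of the exceptional value $\delta=\frac34$ explicit.
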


\begin{proof}
We first prove \textup{(i)}. Let $(\mathcal{A},\ast,\{-,-\})$ be a transposed
$\delta$-mock-Poisson algebra. Since $(\mathcal{A},\ast)$ is anti-commutative
and anti-associative, Proposition~\ref{prop:anti-mN} implies that
$(\mathcal{A},\ast)$ is a $\lambda$-mock-Novikov algebra. Moreover,
$(\mathcal{A},\{-,-\})$ is a mock-Lie algebra.

Taking the cyclic sum of
$\mathcal T_{\delta}(x,y,z)=0$ and using the
anti-commutativity of $\ast$ and the symmetry of $\{-,-\}$ gives
\begin{equation}\label{cyclicTMP-GD}
x\ast\{y,z\}+y\ast\{z,x\}+z\ast\{x,y\}=0,
\end{equation}
since $\delta\neq0$.

A direct calculation gives
\begin{align*}
\mathrm{GD}_{\lambda}(x,y,z)
={}&\frac12\Big(
-\mathcal T_{\delta}(x,y,z)
+\mathcal T_{\delta}(y,x,z)
-\mathcal T_{\delta}(z,y,x)
\Big)\notag\\
&+(1-\delta)
\Big(
x\ast\{y,z\}+y\ast\{z,x\}+z\ast\{x,y\}
\Big).
\end{align*}
Hence \eqref{cyclicTMP-GD} and
$\mathcal T_{\delta}=0$ imply
$\mathrm{GD}_{\lambda}=0$. Thus
$(\mathcal{A},\ast,\{-,-\})$ is a
$\lambda$-mock-Gelfand-Dorfman algebra.

Conversely, suppose that $(\mathcal{A},\ast,\{-,-\})$ is an anti-commutative
$\lambda$-mock-Gelfand-Dorfman algebra. By
Proposition~\ref{prop:anti-mN}, $(\mathcal{A},\ast)$ is anti-associative, while
$(\mathcal{A},\{-,-\})$ is a mock-Lie algebra. It therefore remains only to
verify the transposed compatibility condition.

For $\lambda=2\delta-1$, a direct calculation yields
\begin{align}
(3-4\delta)\mathcal T_{\delta}(x,y,z)
={}&(2\delta-1)\mathrm{GD}_{\lambda}(z,y,x)
 +(2\delta-1)\mathrm{GD}_{\lambda}(y,z,x)\notag\\
&+2(1-\delta)\mathrm{GD}_{\lambda}(z,x,y).
\label{TPfromGD}
\end{align}
Since $\mathrm{GD}_{\lambda}=0$ and
$\delta\neq\frac34$, it follows that
$\mathcal T_{\delta}=0$. Thus
$(\mathcal{A},\ast,\{-,-\})$ is a transposed
$\delta$-mock-Poisson algebra.
\end{proof}

For brevity, we write
\[
\mathrm{m}\mathcal{N}_\lambda,\qquad
\mathrm{m}\mathcal{NP}_\lambda,\qquad
\mathrm{m}\mathcal{P}_\delta,\qquad
\mathrm{tm}\mathcal{P}_\delta,\qquad
\mathrm{m}\mathcal{GD}_\lambda
\]
for $\lambda$-mock-Novikov, $\lambda$-mock-Novikov-Poisson,
$\delta$-mock-Poisson, transposed $\delta$-mock-Poisson, and
$\lambda$-mock-Gelfand-Dorfman algebras, respectively.

Setting $\lambda=2\delta-1$, the main constructions and
correspondences established above may be summarized schematically as
follows:
\begin{equation}\label{diag:mock-structures}
\begin{tikzcd}[column sep=huge,row sep=2cm]
\mathrm{m}\mathcal{N}_{\lambda}
\arrow[r, "\text{\rm Prop.~\ref{prop:mN-to-mGD}}"]
\arrow[d, dashed,
       "\text{\rm Prop.~\ref{pol-mNP}}" description]
&
\mathrm{m}\mathcal{GD}_{\lambda}
\arrow[d, leftrightarrow,
       "\text{\rm Thm.~\ref{thm:mGD-tMP}}"]
\\
\mathrm{m}\mathcal{NP}_{\lambda}
\arrow[u, bend left=45,
       "\text{\rm Prop.~\ref{prop:depolar-mNP}}"{near start}]
\arrow[u, bend right=45,
       "\text{\rm Thm.~\ref{thm:Kantor-mNP}}"' {near end}]
\arrow[r, "\text{\rm Thm.~\ref{thm:mNP-to-tMP}}"']
&
\mathrm{tm}\mathcal{P}_{\delta}.
\end{tikzcd}
\end{equation}

\subsection{Polarization, depolarization, and tensor products}
Having established the polarization and depolarization between
$\delta$-mock-Novikov and $\delta$-mock-Novikov-Poisson algebras in
Propositions~\ref{pol-mNP} and~\ref{prop:depolar-mNP}, we now turn to
the corresponding one-operation descriptions of
$\delta$-mock-Poisson and transposed $\delta$-mock-Poisson algebras.

Recall that Markl and Remm~\cite{MarklRemm} characterized Poisson
algebras in terms of a single nonassociative multiplication through
polarization and depolarization. We first establish the corresponding
result for $\delta$-mock-Poisson algebras.

\begin{definition}\label{def:delta-mP-admissible}
An algebra $(\mathcal{A},\cdot)$ is called an
\emph{admissible $\delta$-mock-Poisson algebra} if for all $x,y,z\in \mathcal{A}$:
\begin{equation}\label{delta-admMP}
\begin{aligned}
f_\delta(x,y,z):={}&
(1+2\delta)(xy)z-(yx)z+(xz)y-\delta(yz)x\\
&+(2\delta-1)(zx)y+3\delta\,x(yz)=0.
\end{aligned}
\end{equation}

\end{definition}

The following result shows that Definition~\ref{def:delta-mP-admissible}
is precisely the one-operation counterpart of
$\delta$-mock-Poisson algebras.

\begin{theorem}
Let $(\mathcal{A},\cdot)$ be an algebra.
Define
\[
x\ast y=\frac12(xy-yx),
\qquad
\{x,y\}=\frac12(xy+yx).
\]
Then $(\mathcal{A},\cdot)$ is an admissible $\delta$-mock-Poisson algebra if
and only if $(\mathcal{A},\ast,\{-,-\})$ is a
$\delta$-mock-Poisson algebra.
\end{theorem}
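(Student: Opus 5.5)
The plan is to polarize $f_\delta$ directly. Write $xy=x\ast y+\{x,y\}$, where $\ast$ is anti-commutative and $\{-,-\}$ is symmetric. Substituting into $f_\delta(x,y,z)$ splits it into three kinds of terms, according to the types of the two products occurring in each monomial: a pure part in $\ast$ only, a pure part in $\{-,-\}$ only, and a mixed part. The mixed part involves $\{x\ast y,z\}$, $x\ast\{y,z\}$ and $\{x,y\}\ast z$ with permuted arguments. We normalize each monomial using anti-commutativity of $\ast$ and symmetry of $\{-,-\}$. Then $f_\delta$ becomes a specific element of the multilinear component, which is free on the normalized monomials.

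\emph{The direction ``$\Leftarrow$''.} Assume $(\mathcal{A},\ast,\{-,-\})$ is a $\delta$-mock-Poisson algebra. We show the polarized $f_\delta$ vanishes part by part.
\begin{enumerate}[\rm (a)]
\item For the pure $\ast$-part, we use that for an anti-commutative anti-associative algebra the map $(x\ast y)\ast z$ is alternating and equals $-x\ast(y\ast z)$ (Lemma~\ref{lem:acaa}(iii); the same argument needs no derivation). Hence every pure $\ast$-term is $\pm T(x,y,z)$. We check that the coefficients, read off from \eqref{delta-admMP}, sum to zero.
\item For the pure bracket part, by symmetry every term is one of $\{\{x,y\},z\}$, $\{\{y,z\},x\}$, $\{\{z,x\},y\}$. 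We expect the coefficients to be equal, so that this part is a multiple of the mock-Jacobi sum, which vanishes.
\item For the mixed part, we rewrite every occurrence of $\{u\ast v,w\}$ using \eqref{deltaMP}. This leaves only terms of the form $u\ast\{v,w\}$ and $\{u,w\}\ast v$, which we normalize to $u\ast\{v,w\}$. We then check that the nine coefficients cancel.
\end{enumerate}

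\emph{The direction ``$\Rightarrow$''.} Assume $f_\delta=0$. We recover the three defining conditions by forming suitable linear combinations of the six permutations $f_\delta(x_{\sigma(1)},x_{\sigma(2)},x_{\sigma(3)})$.
\begin{enumerate}[\rm (a)]
\item The totally symmetric combination $\sum_\sigma f_\delta^\sigma$ kills all $\ast$-containing terms, since these are odd under transpositions. It leaves a nonzero multiple of the mock-Jacobi sum, provided the coefficient sum $(1+2\delta)-1+1-\delta+(2\delta-1)+3\delta=6\delta-\ldots$ is nonzero for $\delta\neq0$; this must be verified.
\item The alternating combination $\sum_\sigma\operatorname{sgn}(\sigma)f_\delta^\sigma$ isolates the fully alternating part of the pure $\ast$-terms, which gives the alternating property of $(x\ast y)\ast z$.
\item The remaining information lives in the two-dimensional isotypic components. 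We apply the idempotents of $\mathbb C[S_3]$ for the standard representation, or simply symmetrize or antisymmetrize in one pair of variables, say $x\leftrightarrow z$. From this we extract, first, the relation $(x\ast y)\ast z=-x\ast(y\ast z)$, which completes anti-associativity. Second, we extract the compatibility \eqref{deltaMP} together with its consequences, such as the cyclic identity.
\end{enumerate}

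The main obstacle is step (c) of the converse. There we must verify that the $S_3$-module generated by the polarized $f_\delta$ coincides with the $S_3$-module generated by three things: the anti-associativity relations, the mock-Jacobi relation, and the relation \eqref{deltaMP} together with its permutations. The first direction gives one inclusion. For the reverse inclusion, the first thing to try is a dimension count: compute the dimension of the $S_3$-submodule generated by $f_\delta$ in the $12$-dimensional space $\mathcal F(E)(3)$ by row reduction of its six permutations. We then compare it with the dimension of the module generated by the polarized mock-Poisson relations. These modules have the same dimension, for all $\delta\neq0$ (possibly after excluding finitely many values where the rank drops, which we would then treat separately), and one contains the other, so they are equal. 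This is the same style of exact linear algebra used in Proposition~\ref{prop:delta-mN-arity-three}.
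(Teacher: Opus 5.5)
Your ``$\Leftarrow$'' direction is correct. It carries the same content as the paper's expression of $2f_\delta$ through $P_1,P_2,P_3^\delta$. For $c\in\{x,y,z\}$ let $(a,b,c)$ be the cyclic rotation of $(x,y,z)$ ending in $c$, and put $T_c=(a\ast b)\ast c$, $U_c=\{a\ast b,c\}$, $V_c=\{a,b\}\ast c$, $W_c=\{\{a,b\},c\}$. Then
\[
f_\delta=(2+2\delta)(T_z+U_z)-4\delta T_x+2\delta U_x+(2\delta-2)(T_y+U_y)+2\delta(W_x+W_y+W_z)+2\delta(V_y+V_z-2V_x),
\]
and your three checks go through. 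In particular, the mixed part vanishes after substituting \eqref{deltaMP}, i.e.\ $U_z=\delta(V_x-V_y)$ and its rotations.

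The converse, however, contains two incorrect steps. In (b), the $T$-part has coefficient sum $0$, so it has no sign component. In fact $\sum_\sigma\operatorname{sgn}(\sigma)f_\delta^\sigma=12\delta(U_x+U_y+U_z)$, which is the cyclic consequence of \eqref{deltaMP} and contains no pure $\ast$-terms. No sign projection could give what you want anyway. Given anti-commutativity, anti-associativity is equivalent to $T_x=T_y=T_z$, i.e.\ to the vanishing of the standard component of the $T$-part; the sign component $T_x+T_y+T_z$ is exactly what must survive. In (a), the terms $\{a,b\}\ast c$ are not odd under transpositions: they span a permutation module whose trivial summand is $V_x+V_y+V_z$. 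They drop out of $\sum_\sigma f_\delta^\sigma=12\delta(W_x+W_y+W_z)$ only because the left- and right-nested coefficients of $f_\delta$ both sum to $3\delta$.

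Consequently the whole converse, anti-associativity included, rests on your closing containment-plus-dimension argument. That strategy is sound, but the dimension equality is asserted, not computed. Your proviso about excluding finitely many $\delta$ does not help: a rank drop at some $\delta\neq0$ would be a counterexample to ``$\Rightarrow$'', not a case to handle separately. Two facts must be shown:
\begin{enumerate}[\rm (1)]
\item the arity-three relation module of $\delta$-mock-Poisson algebras has dimension $2+1+3=6$ (anti-associativity, mock-Jacobi, and the three translates of \eqref{deltaMP}, which is antisymmetric in $x,y$);
\item the six permutations of $f_\delta$ are linearly independent for every $\delta\neq0$.
\end{enumerate}
Both hold. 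The trivial and sign components of $f_\delta$ are $2\delta(W_x+W_y+W_z)$ and $2\delta(U_x+U_y+U_z)$. The standard components of the $T$-, $U$- and $V$-parts span the standard representation exactly when $\delta\neq0$. Hence $\dim\mathbb C[S_3]f_\delta=1+1+4=6$.

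The paper bypasses this by writing the certificates explicitly, e.g.
\[
3\delta P_1(x,y,z)=f_\delta(x,y,z)+f_\delta(z,y,x)-f_\delta(x,z,y)-f_\delta(z,x,y),\qquad 3\delta P_2=\sum_{\sigma\in S_3}f_\delta^\sigma,
\]
with $P_1=4\bigl((x\ast y)\ast z+x\ast(y\ast z)\bigr)$, $P_2$ four times the mock-Jacobi sum, and an analogous six-term combination for $P_3^\delta$. You need to supply one of these two computations to complete the proof.
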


\begin{proof}
The anti-commutativity of $\ast$ and the symmetry of $\{-,-\}$ are
automatic. Set
\begin{align*}
P_1(x,y,z)
&=4\bigl((x\ast y)\ast z+x\ast(y\ast z)\bigr),\\
P_2(x,y,z)
&=4\bigl(
\{\{x,y\},z\}
+\{\{y,z\},x\}
+\{\{z,x\},y\}
\bigr),\\
P_3^\delta(x,y,z)
&=4\Bigl(
\{x\ast y,z\}
+\delta\,x\ast\{y,z\}
+\delta\,\{x,z\}\ast y
\Bigr).
\end{align*}
Thus $P_1=0$, $P_2=0$, and $P_3^\delta=0$ are precisely the
anti-associativity of $\ast$, the mock-Jacobi identity for
$\{-,-\}$, and the $\delta$-mock-Poisson compatibility
\eqref{deltaMP}, respectively.

A direct expansion gives
\begin{align*}
3\delta P_1(x,y,z)
={}&f_\delta(x,y,z)+f_\delta(z,y,x)
-f_\delta(x,z,y)-f_\delta(z,x,y),\\
3\delta P_2(x,y,z)
={}&f_\delta(x,y,z)+f_\delta(y,x,z)+f_\delta(z,y,x)+f_\delta(x,z,y)+f_\delta(y,z,x)+f_\delta(z,x,y),\\
3\delta P_3^\delta(x,y,z)
={}&\delta f_\delta(x,y,z)-\delta f_\delta(y,x,z)
-f_\delta(z,y,x)\notag+\delta f_\delta(x,z,y)-\delta f_\delta(y,z,x)
+f_\delta(z,x,y).
\end{align*}
Hence \eqref{delta-admMP} implies
$P_1=P_2=P_3^\delta=0$, since $\delta\neq0$. Therefore the
polarization of $(\mathcal{A},\cdot)$ is a $\delta$-mock-Poisson algebra.

Conversely, one has
\begin{align*}
2f_\delta(x,y,z)
={}&(\delta-1)P_3^\delta(z,x,y)
+\delta P_3^\delta(y,z,x)
-(\delta+1)P_3^\delta(y,x,z)\notag\\
&-2\delta P_1(y,z,x)
-(\delta+1)P_1(z,x,y)
+\delta P_2(z,x,y).
\end{align*}
Thus, if $(\mathcal{A},\ast,\{-,-\})$ is a $\delta$-mock-Poisson algebra,
then $P_1=P_2=P_3^\delta=0$, and hence
$f_\delta(x,y,z)=0$. Therefore $(\mathcal{A},\cdot)$ is an admissible
$\delta$-mock-Poisson algebra.
\end{proof}

We next consider the one-operation form of transposed
$\delta$-mock-Poisson algebras. Recall that weak Leibniz algebras are
related to transposed Poisson algebras through polarization and
depolarization~\cite{DzhWeakLeibniz}. On the mock side, anti-Leibniz
algebras \cite{BCKM} may be viewed as the counterpart of Leibniz algebras. We therefore introduce the following
concept.

\begin{definition}
An algebra $(\mathcal{A},\cdot)$ is called a
\emph{$\delta$-weak anti-Leibniz algebra} if
\begin{align}
&\delta\bigl((xy)z+(yx)z\bigr)
 +(\delta+1)\bigl(x(yz)+y(xz)\bigr) 
 +(\delta-1)\bigl(x(zy)+y(zx)\bigr)=0,
\label{deltaMWL1}\\
&(\delta+1)\bigl((xy)z+(xz)y\bigr)
 +(\delta-1)\bigl((yx)z+(zx)y\bigr) 
 +\delta\,x(yz+zy)=0
\label{deltaMWL2}
\end{align}
for all $x,y,z\in \mathcal{A}$.
\end{definition}
\begin{remark}
When $\delta=1$, every two-sided anti-Leibniz algebra is a weak
anti-Leibniz algebra in the above sense. Indeed, symmetrizing the left anti-Leibniz identity in $x,y$ and the right anti-Leibniz identity in $y,z$ yields~\eqref{deltaMWL1} and~\eqref{deltaMWL2}, respectively.
\end{remark}

The terminology is justified by the following polarization-depolarization
correspondence.

\begin{theorem}
Let $(\mathcal{A},\cdot)$ be an algebra.
Define
\[
x\ast y=\frac{xy-yx}{2},
\qquad
\{x,y\}=\frac{xy+yx}{2}.
\]
Then $(\mathcal{A},\cdot)$ is a $\delta$-weak anti-Leibniz algebra if and only if
$(\mathcal{A},\ast,\{-,-\})$ is a transposed $\delta$-mock-Poisson algebra.
\end{theorem}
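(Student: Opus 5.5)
The proof follows the pattern of the preceding polarization theorem for $\delta$-mock-Poisson algebras. We express each family of defining identities of one side as linear combinations of permuted copies of the identities of the other side. Anti-commutativity of $\ast$ and symmetry of $\{-,-\}$ are automatic. Since $xy=x\ast y+\{x,y\}$, every identity is a multilinear element of degree three in the twelve monomials $(x_{\sigma(1)}x_{\sigma(2)})x_{\sigma(3)}$ and $x_{\sigma(1)}(x_{\sigma(2)}x_{\sigma(3)})$. The problem therefore reduces to comparing two subspaces of this $12$-dimensional $S_3$-module.

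Set $W_1(x,y,z)$ and $W_2(x,y,z)$ for the left-hand sides of \eqref{deltaMWL1} and \eqref{deltaMWL2}. Put
\[
P_1=4\bigl((x\ast y)\ast z+x\ast(y\ast z)\bigr),\qquad
P_2=4\bigl(\{\{x,y\},z\}+\{\{y,z\},x\}+\{\{z,x\},y\}\bigr),
\]
and $P_4=4\,\mathcal T_\delta(x,y,z)$. Thus $P_1=P_2=P_4=0$ encode anti-associativity, the mock-Jacobi identity and \eqref{deltaTMP}.

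\emph{Step 1.} Expand $P_1,P_2,P_4$ in the monomials of $\cdot$. Note that $W_1$ is symmetric in $x,y$ and $W_2$ is symmetric in $y,z$, so their $S_3$-orbits are each spanned by three translates, exactly as in Proposition~\ref{prop:delta-mN-arity-three}. We should first check that these six translates are linearly independent, using the supports on right-nested and left-nested monomials; this is presumably true for $\delta\neq0$. Similarly, we determine the dimension of the $S_3$-module generated by $P_1,P_2,P_4$. Here $P_2$ is fully symmetric and contributes one dimension; $P_1$ contributes a few; $P_4$ is symmetric in $y,z$ and contributes three.

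\emph{Step 2.} Write each of $P_1,P_2,P_4$ (as functions of $x,y,z$) as a combination of translates of $W_1,W_2$. Conversely, write $W_1$ and $W_2$ as combinations of translates of $P_1,P_2,P_4$. A natural first guess is that the fully symmetrized $W_1$ yields $P_2$ up to a nonzero multiple of $\delta$. The part of $W_1$ antisymmetric under the reversal $x\leftrightarrow z$ should give $P_1$, and a suitable combination of $W_2$-translates gives $P_4$. For the converse, the symmetric part of $W_1$ in $x,y$ should come from $P_4(z,x,y)$, $P_1$ and $P_2$. These coefficients are found by solving small linear systems, ideally by decomposing into isotypic components (trivial, sign, two copies of the standard representation) to keep the solving manageable.

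\emph{Main obstacle.} The hardest point is the converse direction: ensuring that the span of the $P$-translates actually contains $W_1$ and $W_2$, rather than a proper subspace. This may force division by expressions like $\delta$ or $2\delta-1$. I would first compute dimensions on both sides; if they agree (I expect $6$), a single inclusion suffices. If a degenerate value of $\delta$ appears, I would check whether the cyclic consequence \eqref{cyclicTMP-GD} of $\mathcal T_\delta$ (valid since $\delta\neq0$) supplies the missing relation, mirroring how it was used in Theorem~\ref{thm:mGD-tMP}.
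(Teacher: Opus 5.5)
Your plan matches the paper's proof. The paper writes $F_\delta=W_1$ and $G_\delta=W_2$ explicitly as combinations of translates of $P_1,P_2,P_3$ (your $P_4$), and conversely. Only $\delta\neq0$ is needed, since the only $\delta$-dependent division is by $3\delta$ when recovering $P_2$. Your dimension shortcut is also valid, because both spans are $6$-dimensional for $\delta\neq0$.

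Do not rely on your heuristic guesses, however, because they are wrong:
\begin{itemize}
\item The symmetrization of $W_1$ alone is proportional to (sum of all left-nested monomials) $+\,2\,$(sum of all right-nested monomials). That is not a multiple of $P_2$; you need all six translates, which sum to $3\delta P_2$.
\item $P_4=W_1(x,y,z)+W_1(x,z,y)-W_2(y,x,z)-W_2(z,x,y)$ requires both families, not $W_2$-translates alone.
\end{itemize}
So the coefficients genuinely have to come from solving the linear system.
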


\begin{proof}
The anti-commutativity of $\ast$ and the symmetry of $\{-,-\}$ are
automatic. Set
\begin{align*}
P_1(x,y,z)
&=4\bigl((x\ast y)\ast z+x\ast(y\ast z)\bigr),\\
P_2(x,y,z)
&=4\bigl(
\{\{x,y\},z\}
+\{\{y,z\},x\}
+\{\{z,x\},y\}
\bigr),\\
P_3(x,y,z)
&=4\bigl(
2\delta\,x\ast\{y,z\}
+\{x\ast y,z\}
+\{y,x\ast z\}
\bigr).
\end{align*}
Thus $P_1=0$, $P_2=0$, and $P_3=0$ are precisely the
anti-associativity of $\ast$, the mock-Jacobi identity for
$\{-,-\}$, and the transposed $\delta$-mock-Poisson compatibility
condition \eqref{deltaTMP}, respectively.

For convenience, denote the left-hand sides of
\eqref{deltaMWL1} and \eqref{deltaMWL2} by
$F_\delta(x,y,z)$ and $G_\delta(x,y,z)$, respectively. A direct
expansion of the polarized operations gives
\begin{align*}
F_\delta(x,y,z)
={}&-\frac12 P_1(x,z,y)
+\frac{\delta}{2}P_2(x,y,z) +\frac14\Bigl(
P_3(x,y,z)+P_3(y,x,z)-P_3(z,x,y)
\Bigr),\\
G_\delta(x,y,z)
={}&\frac12P_1(x,y,z)+\frac12P_1(x,z,y)
+\frac{\delta}{2}P_2(x,y,z) +\frac14\Bigl(
P_3(x,y,z)-P_3(y,x,z)-P_3(z,x,y)
\Bigr).
\end{align*}
Hence every transposed $\delta$-mock-Poisson algebra depolarizes to a
$\delta$-weak anti-Leibniz algebra.

Conversely, the same computation yields
\begin{align*}
P_1(x,y,z)
={}&\frac13\Bigl(
F_\delta(x,y,z)-2F_\delta(x,z,y)+F_\delta(y,z,x) \notag\\
&\hspace{15mm}
+G_\delta(x,y,z)-2G_\delta(y,x,z)+G_\delta(z,x,y)
\Bigr),\\
P_2(x,y,z)
={}&\frac{1}{3\delta}\Bigl(
F_\delta(x,y,z)+F_\delta(x,z,y)+F_\delta(y,z,x) \notag\\
&\hspace{15mm}
+G_\delta(x,y,z)+G_\delta(y,x,z)+G_\delta(z,x,y)
\Bigr),\\
P_3(x,y,z)
={}&F_\delta(x,y,z)+F_\delta(x,z,y) -G_\delta(y,x,z)-G_\delta(z,x,y).
\end{align*}
Therefore \eqref{deltaMWL1} and \eqref{deltaMWL2} imply
$P_1=P_2=P_3=0$. Thus $(\mathcal{A},\ast)$ is anti-associative,
$(\mathcal{A},\{-,-\})$ is a mock-Lie algebra, and \eqref{deltaTMP} holds.
Hence $(\mathcal{A},\ast,\{-,-\})$ is a transposed
$\delta$-mock-Poisson algebra.
\end{proof}

We now consider tensor products of the Poisson-type structures
introduced above. For the corresponding non-mock classes, we write
$\mathcal{NP}_\delta,\;
\mathcal{P}_\delta,\;
\mathrm{t}\mathcal{P}_\delta$
for the classes of $\delta$-Novikov-Poisson, $\delta$-Poisson, and
transposed $\delta$-Poisson algebras, respectively; see~\cite{K}.
For $\delta=1$, these structures are closed under their natural
tensor products \cite{Xu96,TP1,LB}.
By contrast, such tensor-closure fails for the mock structures
considered here, as well as for $F$-manifold algebras and generalized
Poisson algebras; see~\cite{LSB,zus}.

Accordingly, consider the three pairs
\[
(\mathsf{X}_\delta,\mathsf{Y}_\delta)
\in
\big\{
(\mathcal{NP}_\delta,\mathrm{m}\mathcal{NP}_\delta),
(\mathcal{P}_\delta,\mathrm{m}\mathcal{P}_\delta),
(\mathrm{t}\mathcal{P}_\delta,\mathrm{tm}\mathcal{P}_\delta)
\big\}.
\]
Fix one such pair, and for $i=1,2$, let
$(\mathcal{A}_i,\ast_i,\diamond_i)$ be either an
$\mathsf{X}_\delta$-algebra or a $\mathsf{Y}_\delta$-algebra.
Set
\[
\varepsilon_i=
\begin{cases}
1, & \text{if $\mathcal{A}_i$ is an
$\mathsf{X}_\delta$-algebra},\\
-1, & \text{if $\mathcal{A}_i$ is a
$\mathsf{Y}_\delta$-algebra}.
\end{cases}
\]
Here $\diamond_i$ denotes $\circ_i$ in the Novikov-Poisson case and
$\{-,-\}_i$ in the Poisson and transposed Poisson cases.

Define bilinear operations $\ast$ and $\diamond$ on
$\mathcal A_1\otimes\mathcal A_2$ by
\begin{equation}\label{tensor-product}
\begin{aligned}
(x_1\otimes x_2)\ast(y_1\otimes y_2)
&=
(x_1\ast_1 y_1)\otimes(x_2\ast_2 y_2),\\
(x_1\otimes x_2)\diamond(y_1\otimes y_2)
&=
(x_1\diamond_1 y_1)\otimes(x_2\ast_2 y_2)
+(x_1\ast_1 y_1)\otimes(x_2\diamond_2 y_2),
\end{aligned}
\end{equation}
for $x_i,y_i\in\mathcal A_i$, and extend them bilinearly.

\begin{theorem}\label{thm:tensor-unified}
With the notation above and the operations defined by
\eqref{tensor-product},
$(\mathcal{A}_1\otimes\mathcal{A}_2,\ast,\diamond)$ is an
$\mathsf{X}_\delta$-algebra if
$\varepsilon_1\varepsilon_2=1$, and a
$\mathsf{Y}_\delta$-algebra if
$\varepsilon_1\varepsilon_2=-1$.
\end{theorem}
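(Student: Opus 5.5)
The plan is a sign-bookkeeping argument that treats all three pairs uniformly. The first step is to rewrite every defining identity of the six classes with explicit $\varepsilon$-dependent signs. For a single algebra $\mathcal A_i$ with sign $\varepsilon_i$, the identities are:
\begin{align*}
x\ast_i y&=\varepsilon_i\, y\ast_i x,\\
(x\ast_i y)\ast_i z&=\varepsilon_i\, x\ast_i(y\ast_i z).
\end{align*}
These give commutative associative algebras for $\varepsilon_i=1$ and anti-commutative anti-associative algebras for $\varepsilon_i=-1$. The compatibility and $\diamond$-identities are written the same way:
\begin{itemize}
\item The right (anti-)commutativity reads $(x\circ y)\circ z=\varepsilon (x\circ z)\circ y$.
\item The $\delta$-(mock-)pre-Lie identity reads $\delta(x\circ y)\circ z-\varepsilon\delta (y\circ x)\circ z = \varepsilon x\circ(y\circ z)-\ldots$, arranged so that $\varepsilon=1$ gives the $\delta$-Novikov identities of \cite{K} and $\varepsilon=-1$ gives \eqref{deltaMN1}.
\item \eqref{id: NP  ass} becomes $(x\ast y)\circ z=\varepsilon\, x\ast(y\circ z)$.
\item \eqref{mockNP}, \eqref{deltaMP} and \eqref{deltaTMP} acquire the analogous single factor $\varepsilon$ (or $-\varepsilon$ in front of $\delta$).
\item The (mock-)Lie bracket $\{-,-\}$ satisfies $\{x,y\}=-\varepsilon\{y,x\}$ together with the Jacobi identity, which is sign-independent once symmetry is fixed.
\end{itemize}
The key observation to record is that each identity is a relation between monomials, all with the same underlying tree structure up to permutation. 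Each such relation has a coefficient pattern depending on $\varepsilon$ only through a single factor attached to each ``non-standard'' monomial.

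Next, the $\ast$-part is handled, which is immediate. On pure tensors,
\[
(x_1\otimes x_2)\ast(y_1\otimes y_2)=(x_1\ast_1y_1)\otimes(x_2\ast_2y_2)=\varepsilon_1\varepsilon_2\,(y_1\otimes y_2)\ast(x_1\otimes x_2),
\]
and similarly for the associator. So $(\mathcal A_1\otimes\mathcal A_2,\ast)$ is commutative associative or anti-commutative anti-associative according to $\varepsilon=\varepsilon_1\varepsilon_2$. For $\diamond$, every $\diamond$-product of pure tensors is a sum of two terms: a $\diamond_1$ on the first factor tensored with a $\ast_2$ on the second, and vice versa. A degree-three expression, with $\diamond$ or $\ast$ in each slot, therefore expands into terms indexed by where the $\diamond$'s sit. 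In each term one factor carries a pure $\ast$-monomial (or a mixed monomial) and the other a monomial containing the $\diamond$'s. The symmetry of $\diamond$ follows at once, since $\{x,y\}=-\varepsilon_1\{y,x\}_1$ combined with $x\ast_2 y=\varepsilon_2y\ast_2x$ gives the overall sign $-\varepsilon_1\varepsilon_2$ on both summands.

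The main step, and the main obstacle, is checking the mixed identities. These are the right (anti-)commutativity and $\delta$-pre-Lie identities for $\diamond$ in the Novikov–Poisson case, Jacobi in the Poisson cases, and the compatibilities \eqref{id: NP  ass}, \eqref{mockNP}, \eqref{deltaMP}, \eqref{deltaTMP}. The plan is as follows.
\begin{enumerate}
\item Expand each identity on $\mathcal A_1\otimes\mathcal A_2$ into terms grouped by which tensor factor carries the $\diamond$'s.
\item In each group, use the $\ast$-identities of the other factor to rewrite all of its monomials, which now involve $\ast$ only, into one normal form such as $(a\ast b)\ast c$. This produces signs that are powers of $\varepsilon_2$ (resp.\ $\varepsilon_1$).
\item Factor the group as a normal-form $\ast$-monomial tensored with a linear combination of $\diamond$-monomials in the first factor. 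That combination should be exactly the corresponding defining identity of $\mathcal A_1$ with sign $\varepsilon_1$, rescaled by $\varepsilon_2$ to match the target sign $\varepsilon_1\varepsilon_2$.
\end{enumerate}
Groups with two $\diamond$'s split between the two factors are the delicate ones. In the Jacobi and pre-Lie identities they require the compatibility identities of both factors, for example \eqref{id: NP  ass} on one side and its sibling on the other, and there the cancellation must come from pairing terms across permutations of $x,y,z$. For these I will first try the transposed Poisson case, since its compatibility is the longest, and pair the cross terms by the involution swapping the roles of the two factors. I expect $\varepsilon_i^2=1$ to make the signs align.

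For the Novikov–Poisson case with $\delta\neq1$, the $\delta$-weighted terms need care: the $\delta$ arising from one factor's compatibility must match the $\delta$ in the target identity. I will check this explicitly on \eqref{mockNP}. Finally, because all computations are multilinear in the $\varepsilon_i$, it suffices to carry them out once symbolically in $\varepsilon_1,\varepsilon_2$ rather than in four separate cases.
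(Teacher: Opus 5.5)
Your overall architecture matches the paper's: the $\varepsilon_i$-signed uniform identities, multiplicativity of the signs on $\mathcal A_1\otimes\mathcal A_2$, expansion on pure tensors, and grouping by which factor carries the $\diamond$'s. The groups with all $\diamond$'s in one factor do reduce to the factor identities as you describe.

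\textbf{The gap.} It lies in the step you yourself flag as the main obstacle. For the mixed groups you only state an expectation: pair cross terms under the factor swap and let $\varepsilon_i^2=1$ align the signs. In the transposed case this cannot work from the compatibility alone.

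To see why, let $(a,b,c)$ run over the cyclic permutations of $(x,y,z)$ and put $u^{(i)}_a=a_i\ast_i\{b_i,c_i\}_i$ and $W^{(i)}_c=\{a_i\ast_i b_i,c_i\}_i$. The mixed part of the Jacobi sum on the tensor product is
\[
M=\varepsilon_1\sum_{\mathrm{cyc}}u^{(1)}_z\otimes W^{(2)}_z+\varepsilon_2\sum_{\mathrm{cyc}}W^{(1)}_z\otimes u^{(2)}_z .
\]
The uniform form of \eqref{deltaTMP} gives only the differences $W^{(i)}_z-W^{(i)}_y=2\delta\varepsilon_i u^{(i)}_x$ and their cyclic shifts; individual $W^{(i)}$'s are not determined. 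Writing
$\sum_{\mathrm{cyc}}u^{(1)}_z\otimes W^{(2)}_z=(u^{(1)}_x+u^{(1)}_y+u^{(1)}_z)\otimes W^{(2)}_x+u^{(1)}_y\otimes(W^{(2)}_y-W^{(2)}_x)+u^{(1)}_z\otimes(W^{(2)}_z-W^{(2)}_x)$
shows that the first summand cannot be handled by any regrouping or sign alignment.

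\textbf{The missing idea.} You need the derived cyclic identity $a\ast_i\{b,c\}_i+b\ast_i\{c,a\}_i+c\ast_i\{a,b\}_i=0$. It follows by summing the compatibility cyclically: the right-hand sides cancel because $\{b,a\ast_i c\}_i=-\{c\ast_i a,b\}_i$, and then you divide by $2\delta\varepsilon_i\neq0$. The paper derives exactly this before treating $M$. With it, the two sums become $2\delta\varepsilon_2\bigl(u^{(1)}_y\otimes u^{(2)}_z-u^{(1)}_z\otimes u^{(2)}_y\bigr)$ and $2\delta\varepsilon_1\bigl(u^{(1)}_z\otimes u^{(2)}_y-u^{(1)}_y\otimes u^{(2)}_z\bigr)$, so $M=0$.

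\textbf{Two smaller corrections to the plan.}

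First, your step 2 (normalizing the other factor using only $\ast$-identities) applies to the pure groups only. In mixed groups the other factor contains a $\diamond$ and must be normalized with the first compatibility $(a\ast_i b)\diamond_i c=\varepsilon_i\, a\ast_i(b\diamond_i c)$. For right (anti-)commutativity you also need its consequence $(a\diamond_i b)\ast_i c=\varepsilon_i(a\ast_i c)\diamond_i b$, which uses the $\varepsilon_i$-commutativity of $\ast_i$.

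Second, in the Novikov--Poisson case the $\delta$-matching you plan to check on \eqref{mockNP} is trivial there, since it is a pure grouping. Where it genuinely matters is the mixed part of the $\delta$-pre-Lie identity on the tensor product. After normalization, that part equals $\Phi_1(x_1,y_1,z_1)\otimes\bigl((x_2\ast_2y_2)\diamond_2z_2\bigr)+\bigl((x_1\ast_1y_1)\diamond_1z_1\bigr)\otimes\Phi_2(x_2,y_2,z_2)$. Here $\Phi_i(a,b,c)=\delta(a\diamond_ib)\ast_ic-\varepsilon_i\,a\diamond_i(b\ast_ic)-\varepsilon_i\delta(b\diamond_ia)\ast_ic+b\diamond_i(a\ast_ic)$ is the second compatibility of $\mathcal A_i$. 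So the tensor pre-Lie identity needs the factors' second compatibility, not only their pre-Lie identity.
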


\begin{proof}
Write
$x=x_1\otimes x_2,\;
y=y_1\otimes y_2,\;
z=z_1\otimes z_2,$
and set $\varepsilon=\varepsilon_1\varepsilon_2$.
For each factor, the multiplication $\ast_i$ satisfies
$a\ast_i b=\varepsilon_i b\ast_i a$ and
$(a\ast_i b)\ast_i c=\varepsilon_i a\ast_i(b\ast_i c)$.
It follows immediately from~\eqref{tensor-product} that
\[
x\ast y=\varepsilon\,y\ast x,
\qquad
(x\ast y)\ast z=\varepsilon\,x\ast(y\ast z).
\]
Thus $\ast$ is commutative associative if $\varepsilon=1$, and
anti-commutative anti-associative if $\varepsilon=-1$.

We first consider
\[
(\mathsf X_\delta,\mathsf Y_\delta)
=
(\mathcal{NP}_\delta,\mathrm{m}\mathcal{NP}_\delta).
\]
The defining identities of the two classes can be written uniformly,
for $i=1,2$, as
\begin{align}
(a\diamond_i b)\diamond_i c
&=\varepsilon_i(a\diamond_i c)\diamond_i b,
\label{eq:tensor-unified-NP1}\\
\delta(a\diamond_i b)\diamond_i c
-\varepsilon_i a\diamond_i(b\diamond_i c)
&=
\varepsilon_i\delta(b\diamond_i a)\diamond_i c
-b\diamond_i(a\diamond_i c),
\label{eq:tensor-unified-NP2}\\
(a\ast_i b)\diamond_i c
&=\varepsilon_i a\ast_i(b\diamond_i c),
\label{eq:tensor-unified-NP3}\\
\delta(a\diamond_i b)\ast_i c
-\varepsilon_i a\diamond_i(b\ast_i c)
&=
\varepsilon_i\delta(b\diamond_i a)\ast_i c
-b\diamond_i(a\ast_i c).
\label{eq:tensor-unified-NP4}
\end{align}
For $\varepsilon_i=-1$, these are precisely
\eqref{deltaMN2}, \eqref{deltaMN1},
\eqref{id: NP  ass}, and \eqref{mockNP}, respectively; for
$\varepsilon_i=1$, they are the corresponding
$\delta$-Novikov-Poisson identities.

The point is that the signs in the two tensor factors combine
multiplicatively. For example, iterating~\eqref{tensor-product} gives
\[
\begin{aligned}
(x\diamond y)\diamond z
={}&((x_1\diamond_1y_1)\diamond_1z_1)
   \otimes((x_2\ast_2y_2)\ast_2z_2)\\
&+((x_1\diamond_1y_1)\ast_1z_1)
   \otimes((x_2\ast_2y_2)\diamond_2z_2)\\
&+((x_1\ast_1y_1)\diamond_1z_1)
   \otimes((x_2\diamond_2y_2)\ast_2z_2)\\
&+((x_1\ast_1y_1)\ast_1z_1)
   \otimes((x_2\diamond_2y_2)\diamond_2z_2).
\end{aligned}
\]
Using~\eqref{eq:tensor-unified-NP1} and
\eqref{eq:tensor-unified-NP3}, together with the two sign relations
for $\ast_i$, transforms the right-hand side into
$\varepsilon(x\diamond z)\diamond y$.

The first compatibility condition illustrates the same cancellation
more directly:
\[
\begin{aligned}
(x\ast y)\diamond z
={}&((x_1\ast_1y_1)\diamond_1z_1)
   \otimes((x_2\ast_2y_2)\ast_2z_2)\\
&+((x_1\ast_1y_1)\ast_1z_1)
   \otimes((x_2\ast_2y_2)\diamond_2z_2)\\
={}&\varepsilon\,x\ast(y\diamond z),
\end{aligned}
\]
where~\eqref{eq:tensor-unified-NP3} is used in both factors.
Grouping the four terms in the corresponding expansions and applying
\eqref{eq:tensor-unified-NP2} and
\eqref{eq:tensor-unified-NP4}, respectively, gives
\[
\begin{aligned}
\delta(x\diamond y)\diamond z
-\varepsilon x\diamond(y\diamond z)
&=
\varepsilon\delta(y\diamond x)\diamond z
-y\diamond(x\diamond z),\\
\delta(x\diamond y)\ast z
-\varepsilon x\diamond(y\ast z)
&=
\varepsilon\delta(y\diamond x)\ast z
-y\diamond(x\ast z).
\end{aligned}
\]
Hence the tensor product is a $\delta$-Novikov-Poisson algebra when
$\varepsilon=1$, and a $\delta$-mock-Novikov-Poisson algebra when
$\varepsilon=-1$.

It remains to consider the pairs
\[
(\mathcal P_\delta,\mathrm{m}\mathcal P_\delta)
\qquad\text{and}\qquad
(\mathrm t\mathcal P_\delta,\mathrm{tm}\mathcal P_\delta).
\]
Here $\diamond_i=\{-,-\}_i$, and the symmetry of the bracket is
uniformly expressed by
$\{a,b\}_i=-\varepsilon_i\{b,a\}_i$.
Consequently,
$\{x,y\}=-\varepsilon\{y,x\}$ on
$\mathcal A_1\otimes\mathcal A_2$.

We next verify the Jacobi-type identity. Since both Lie and mock-Lie
brackets satisfy the same cyclic Jacobi identity, the pure terms in
the cyclic expansion of
$\sum_{\rm cyc}\{\{x,y\},z\}$ vanish. Indeed,
$(a\ast_i b)\ast_i c$ is invariant under cyclic permutations.
The remaining mixed terms are
\[
\begin{aligned}
M={}&
\sum_{\rm cyc}
(\{x_1,y_1\}_1\ast_1z_1)
 \otimes\{x_2\ast_2y_2,z_2\}_2\\
&+
\sum_{\rm cyc}
\{x_1\ast_1y_1,z_1\}_1
 \otimes(\{x_2,y_2\}_2\ast_2z_2).
\end{aligned}
\]

Suppose first that
$(\mathsf X_\delta,\mathsf Y_\delta)
=(\mathcal P_\delta,\mathrm{m}\mathcal P_\delta)$.
The two compatibility identities are simultaneously written as
\[
\{a\ast_i b,c\}_i
=
\varepsilon_i\delta
\bigl(
a\ast_i\{b,c\}_i+\{a,c\}_i\ast_i b
\bigr).
\]
Substituting this identity into the two cyclic sums defining $M$ and
using
$a\ast_i b=\varepsilon_i b\ast_i a$ and
$\{a,b\}_i=-\varepsilon_i\{b,a\}_i$,
the mixed terms cancel pairwise after cyclic reindexing. Hence
$M=0$, so the induced bracket is Lie if $\varepsilon=1$ and
mock-Lie if $\varepsilon=-1$. Applying the same identity directly to
\eqref{tensor-product} yields
\[
\{x\ast y,z\}
=
\varepsilon\delta
\bigl(
x\ast\{y,z\}+\{x,z\}\ast y
\bigr),
\]
which is the required compatibility condition.

Finally, let
$(\mathsf X_\delta,\mathsf Y_\delta)
=(\mathrm t\mathcal P_\delta,\mathrm{tm}\mathcal P_\delta)$.
In this case the defining compatibility conditions take the uniform
form
\[
2\delta\varepsilon_i\,a\ast_i\{b,c\}_i
=
\{a\ast_i b,c\}_i+\{b,a\ast_i c\}_i.
\]
Taking the cyclic sum and using the symmetry rules for $\ast_i$ and
$\{-,-\}_i$ gives
\[
a\ast_i\{b,c\}_i+
b\ast_i\{c,a\}_i+
c\ast_i\{a,b\}_i=0.
\]
These two identities applied to the mixed expression $M$ again give
$M=0$. Moreover, expanding the transposed compatibility expression
on the tensor product and applying the factor identities gives
\[
2\delta\varepsilon\,x\ast\{y,z\}
=
\{x\ast y,z\}+\{y,x\ast z\}.
\]
Thus the tensor product is a transposed $\delta$-Poisson algebra for
$\varepsilon=1$ and a transposed $\delta$-mock-Poisson algebra for
$\varepsilon=-1$.

Since $\varepsilon=\varepsilon_1\varepsilon_2$, the conclusion follows
in all three cases.
\end{proof}

\begin{remark}
The rules of
Theorem~\ref{thm:tensor-unified} do not extend in general to
$\mathrm{m}\mathcal N_\delta$ or
$\mathrm{m}\mathcal{GD}_\delta$. Nevertheless, the theorem yields the
following related tensor construction for $\delta$-mock-Novikov
algebras.
\end{remark}

\begin{corollary}\label{cor:mock-tensor-Novikov}
Let $(\mathcal{A},\circ)$ be a $\delta$-mock-Novikov algebra and let
$(\mathcal{B},\ast)$ be an anti-commutative anti-associative algebra.
Define
\[
(a\otimes p)\bullet(b\otimes q)
=(a\circ b)\otimes(p\ast q).
\]
Then $(\mathcal{A}\otimes\mathcal{B},\bullet)$ is a
$\delta$-Novikov algebra.
\end{corollary}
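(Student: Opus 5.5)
The corollary should follow from Theorem~\ref{thm:tensor-unified} applied to the pair $(\mathcal{NP}_\delta,\mathrm{m}\mathcal{NP}_\delta)$, after putting both factors into degenerate $\delta$-mock-Novikov-Poisson form. As a safeguard, I would also check the two $\delta$-Novikov identities directly. Throughout, I take the $\delta$-Novikov identities to be the $\varepsilon=1$ forms of \eqref{eq:tensor-unified-NP1}--\eqref{eq:tensor-unified-NP2}:
\[
(x\bullet y)\bullet z=(x\bullet z)\bullet y,
\qquad
\delta(x\bullet y)\bullet z-x\bullet(y\bullet z)=\delta(y\bullet x)\bullet z-y\bullet(x\bullet z).
\]

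\textbf{Route via Theorem~\ref{thm:tensor-unified}.}
\begin{enumerate}[\rm (1)]
\item Equip $\mathcal{A}$ with the zero product $\ast_{\mathcal A}=0$. The zero product is trivially anti-commutative and anti-associative. Identities \eqref{id: NP  ass} and \eqref{mockNP} then hold trivially, since every term contains a $\ast_{\mathcal A}$-product. Hence $(\mathcal{A},0,\circ)$ is a $\delta$-mock-Novikov-Poisson algebra.
\item Equip $\mathcal{B}$ with the zero product $\circ_{\mathcal B}=0$. The zero product is a $\delta$-mock-Novikov algebra, and both compatibility identities vanish termwise. Hence $(\mathcal{B},\ast,0)$ is also a $\delta$-mock-Novikov-Poisson algebra.
\item Both factors have $\varepsilon_i=-1$, so $\varepsilon_1\varepsilon_2=1$. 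Theorem~\ref{thm:tensor-unified} then says the tensor product is a $\delta$-Novikov-Poisson algebra. In particular, its $\diamond$-multiplication is $\delta$-Novikov.
\item By \eqref{tensor-product}, the tensor $\diamond$-product is
\[
(a\circ b)\otimes(p\ast q)+(a\ast_{\mathcal A}b)\otimes(p\circ_{\mathcal B}q)=(a\circ b)\otimes(p\ast q),
\]
since the second term vanishes. This is exactly $\bullet$, which proves the corollary.
\end{enumerate}

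\textbf{Direct check.} Because the sign bookkeeping inside Theorem~\ref{thm:tensor-unified} is only sketched, I would also verify both identities on pure tensors.

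For right commutativity, compare
\[
((a\circ b)\circ c)\otimes((p\ast q)\ast r)
\quad\text{with}\quad
((a\circ c)\circ b)\otimes((p\ast r)\ast q).
\]
By \eqref{deltaMN2}, $(a\circ b)\circ c=-(a\circ c)\circ b$. By Lemma~\ref{lem:acaa}(iii), the map $T(p,q,r)=(p\ast q)\ast r$ is alternating, so $(p\ast q)\ast r=-(p\ast r)\ast q$. The two signs cancel, giving $(x\bullet y)\bullet z=(x\bullet z)\bullet y$.

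For the $\delta$-pre-Lie identity, rewrite every $\mathcal B$-factor as a multiple of $(p\ast q)\ast r$:
\begin{itemize}
\item $p\ast(q\ast r)=-(p\ast q)\ast r$, by anti-associativity;
\item $(q\ast p)\ast r=-(p\ast q)\ast r$, by anti-commutativity;
\item $q\ast(p\ast r)=-(q\ast p)\ast r=(p\ast q)\ast r$, combining the two.
\end{itemize}
The left side becomes $\bigl[\delta(a\circ b)\circ c+a\circ(b\circ c)\bigr]\otimes(p\ast q)\ast r$. The right side becomes $-\bigl[\delta(b\circ a)\circ c+b\circ(a\circ c)\bigr]\otimes(p\ast q)\ast r$. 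Their difference is $r_{1,\delta}(a,b,c)\otimes(p\ast q)\ast r$, which vanishes by \eqref{deltaMN1}.

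The only delicate point is the sign bookkeeping. The anti-commutative and anti-associative signs from $\mathcal B$ must exactly convert the mock identities on $\mathcal A$ into the non-mock ones. Part (iii) of Lemma~\ref{lem:acaa} handles this for right commutativity, and the three rewritings above handle it for the $\delta$-pre-Lie identity.
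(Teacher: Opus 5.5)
Your proposal is correct and follows the paper's proof exactly: you regard $(\mathcal A,0,\circ)$ and $(\mathcal B,\ast,0)$ as $\delta$-mock-Novikov-Poisson algebras, apply Theorem~\ref{thm:tensor-unified} with $\varepsilon_1\varepsilon_2=1$, and observe that the induced $\diamond$-product reduces to $\bullet$. Your supplementary direct check on pure tensors is also correct, and it usefully makes the conclusion independent of the sign bookkeeping that is only sketched in the proof of that theorem.
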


\begin{proof}
Regard $(\mathcal A,0,\circ)$ and $(\mathcal B,\ast,0)$ as
$\delta$-mock-Novikov-Poisson algebras and apply
Theorem~\ref{thm:tensor-unified}.
\end{proof}





\end{document}